\documentclass[oneside,english]{amsart}
\usepackage[T1]{fontenc}
\usepackage[latin9]{inputenc}
\usepackage{mathrsfs}
\usepackage{amstext}
\usepackage{amsthm}
\usepackage{amssymb}

\makeatletter
\numberwithin{equation}{section}
\numberwithin{figure}{section}
\theoremstyle{plain}
\newtheorem{thm}{\protect\theoremname}
\theoremstyle{definition}
\newtheorem{defn}[thm]{\protect\definitionname}
\theoremstyle{plain}
\newtheorem{lem}[thm]{\protect\lemmaname}
\theoremstyle{plain}
\newtheorem{prop}[thm]{\protect\propositionname}
\theoremstyle{remark}
\newtheorem{rem}[thm]{\protect\remarkname}
\theoremstyle{plain}
\newtheorem{cor}[thm]{\protect\corollaryname}
\theoremstyle{definition}
\newtheorem{example}[thm]{\protect\examplename}

\makeatother

\usepackage{babel}
\providecommand{\corollaryname}{Corollary}
\providecommand{\definitionname}{Definition}
\providecommand{\examplename}{Example}
\providecommand{\lemmaname}{Lemma}
\providecommand{\propositionname}{Proposition}
\providecommand{\remarkname}{Remark}
\providecommand{\theoremname}{Theorem}

\begin{document}
\global\long\def\G{\mathcal{G}}%
 
\global\long\def\F{\mathcal{F}}%
 
\global\long\def\H{\mathcal{H}}%
\global\long\def\Z{\mathcal{Z}}%
 
\global\long\def\L{\mathcal{L}}%
\global\long\def\U{\mathcal{U}}%
\global\long\def\W{\mathcal{W}}%
 
\global\long\def\E{\mathcal{E}}%
\global\long\def\B{\mathcal{B}}%
 
\global\long\def\A{\mathcal{A}}%
\global\long\def\D{\mathcal{D}}%
\global\long\def\O{\mathcal{O}}%
 
\global\long\def\N{\mathcal{N}}%
 
\global\long\def\X{\mathcal{X}}%
 
\global\long\def\lm{\lim\nolimits}%
 
\global\long\def\then{\Longrightarrow}%
\global\long\def\id{\operatorname{id}}%

\global\long\def\V{\mathcal{V}}%
\global\long\def\C{\mathcal{C}}%
\global\long\def\adh{\operatorname{adh}}%
\global\long\def\Seq{\operatorname{Seq}}%
\global\long\def\intr{\operatorname{int}}%
\global\long\def\cl{\operatorname{cl}}%
\global\long\def\inh{\operatorname{inh}}%
\global\long\def\diam{\operatorname{diam}\ }%
\global\long\def\card{\operatorname{card}}%
\global\long\def\T{\operatorname{T}}%
\global\long\def\S{\operatorname{S}}%
\global\long\def\I{\operatorname{I}}%
\global\long\def\K{\operatorname{K}}%
\global\long\def\rdc{\operatorname{rdc}}%

\global\long\def\fix{\operatorname{fix}}%
\global\long\def\Epi{\operatorname{Epi}}%
\global\long\def\UU{\mathfrak{U}}%

\global\long\def\FF{\mathfrak{F}}%
\global\long\def\GG{\mathfrak{G}}%
\global\long\def\cont{\mathscr{C}}%
\global\long\def\conv{\mathsf{Conv}}%
\global\long\def\prtop{\mathsf{PrTop}}%
\global\long\def\top{\mathsf{Top}}%
\global\long\def\pconv{\mathsf{pConv}}%
\global\long\def\pstop{\mathsf{PsTop}}%
\global\long\def\Cap{\mathsf{Cap}}%
\global\long\def\prap{\mathsf{PrAp}}%
\global\long\def\ap{\mathsf{Ap}}%
\global\long\def\psap{\mathsf{PsAp}}%
\global\long\def\Cconv{\mathsf{C^{Conv}}}%

\title{On some convergence approach structures on hyperspaces}
\author{M.Ate\c{s}$^{*}$, F. Mynard$^{\dagger}$, S.Sa\u{g}\i ro\u{g}lu$^{\ddagger}$}
\begin{abstract}
In the context of the category $\Cap$ of convergence approach spaces
and contractions, we introduce and study approach analogs of the upper
and lower Kuratowski convergences, upper-Fell and Fell topologies
on the set of closed subsets of the coreflection on the category $\mathsf{Conv}$
of convergence spaces of a convergence approach space. In particular,
over a pre-approach space, the $\conv$-coreflection of the lower
Kuratowski convergence approach structure is the lower Kuratowski
convergence associated with the $\conv$-coreflection of the base
space, while the $\conv$-reflection is the lower Kuratowski convergence
associated with the $\conv$-reflection. The $\conv$-coreflection
of the upper Kuratowski convergence approach is is the upper Kuratowski
convergence associated with the $\conv$-reflection of the base space,
while the $\conv$-reflection is the upper Kuratowski convergence
associated with the $\conv$-coreflection of the base space. We show
that, over an approach space, the lower Kuratowski convergence approach
structure is in fact an approach structure that coincides with the
$\vee$-Vietoris approach structure introduced by Lowen and his collaborators,
though it may be strictly finer over a general convergence approach
space. We show that the upper Fell convergence approach structure
is a non-Archimedean approach structure coarser than the upper Kuratowski
convergence approach, but finer than the upper Fell approach structure
introduced by the first and third author. We also obtain a $\Cap$
abstraction of the classical result that if the upper Kuratowski convergence
over a topological space is pretopological, then it is also topological.
\end{abstract}

\keywords{convergence approach spaces, Kuratowski convergence, Fell structure}
\subjclass[2000]{54A20, 54B20, 54A05, 54H99, 18F99}
\email{$^{*}$mbiten@ankara.edu.tr, $^{\dagger}$fmynard@njcu.edu, $^{\ddagger}$ssagir@science.ankara.edu.tr}
\address{$^{*,\ddagger}$Department of Mathematics, Faculty of Science, Ankara
University, 06100, Ankara, TÜRK\.{I}YE}
\address{$^{\dagger}$College of Arts and Sciences, New Jersey City University,
2039 Kennedy Blvd, Jersey City, NJ 07305, USA}
\maketitle

\section{PREL\.{I}M\.{I}NAR\.{I}ES}

If $X$ is a set, we denote its powerset by $\mathbb{P}X$ and its
set of finite subsets by $[X]^{<\infty}$. The set of (set-theoretic)
filters on $X$ is denoted $\mathbb{F}X$ and the set of ultrafilters
on $X$ is denoted $\mathbb{U}X$. If $\A\subset\mathbb{P}X$ is a
family of subsets of $X$, we will consider the following associated
families: its \emph{isotone hull}
\[
\A^{\uparrow}=\{B\subset X:\exists A\in\A,A\subset B\},
\]
its \emph{grill
\[
\A^{\#}=\left\{ B\subset X:\forall A\in\A,B\cap A\neq\emptyset\right\} ,
\]
}its \emph{completion by finite intersections
\[
\A^{\cap}=\{\bigcap\B:\B\in[\A]^{<\infty}\}.
\]
}

When $\A=\{A\}$, we abridge $A^{\uparrow}$ for the principal filter
$\{A\}^{\uparrow}$ of $A$. In particular, if $x\in X$ then $\{x\}^{\uparrow}$
denotes the principal ultrafilter generated by $x$. 

If $\A,\B\subset\mathbb{P}X$, we say that $\A$ and $\B$ \emph{mesh},
in symbols $\A\#\B$, if $\A\subset\B^{\#}$, equivalently, $\B\subset\A^{\#}$.
We say that $\A$ \emph{is finer than $\B$ }or that $\B$ \emph{is
coarser than $\A$, }in symbols\emph{ $\A\geq\B$, }if for every $B\in\B$
there is $A\in\A$ with $A\subset B$, that is, if $\B\subset\A^{\uparrow}$.
For every filter $\F$ on $X$, $\beta(\F)=\{\U\in\mathbb{U}X:\U\geq\F\}$
denotes the set of finer ultrafilters. When $\F=A^{\uparrow}$ is
principal, we abridge to $\beta A=\{\U\in\mathbb{U}X:A\in\U\}$.

A \emph{convergence }$\xi$ on a set $X$ is a relation between points
of $X$ and filters on $X$, denoted $x\in\lm_{\xi}\F$ if $(x,\F)\in\xi$
and interpreted as $x$ is a limit point for $\F$ in $\xi$, satisfying
\begin{equation}
\tag{centered}x\in\lm_{\xi}\{x\}^{\uparrow}\label{eq:centeredconv}
\end{equation}
for every $x\in X$ and 
\begin{equation}
\tag{monotone}\F\leq\G\then\lm_{\xi}\F\subset\lm_{\xi}\G,\label{eq:convmontone}
\end{equation}
for all $\F,\G\in\mathbb{F}X$. The pair $(X,\xi)$ is then called
a \emph{convergence space. }If only (\ref{eq:convmontone}) is satisfied
but not necessarily (\ref{eq:centeredconv}), $(X,\xi)$ is a called
a \emph{preconvergence space}.

A map $f:X\to Y$ between two (pre)convergence spaces $(X,\xi)$ and
$(Y,\tau)$ is \emph{continuous}, in symbols $f\in\cont(\xi,\tau)$,
if $f(x)\in\lm_{\tau}f[\F]$ whenever $x\in\lm_{\xi}\F$, where $f[\F]=\{f(F):F\in\F\}^{\uparrow}$
is the image filter. Let $\pconv$ and $\conv$ denote the category
of preconvergence spaces and continuous maps and convergence spaces
and continuous maps, respectively. If $\xi,\sigma$ are two (pre)convergences
on $X$, we say that $\xi$ is \emph{finer than $\sigma$ }or that
$\sigma$ is \emph{coarser than $\xi$ }if the identity map $\id_{X}\in\cont(\xi,\sigma)$,
that is, if $\lim_{\xi}\F\subset\lim_{\sigma}\F$ for every filter
$\F$ on $X$.

If a convergence additionally satisfies
\begin{equation}
\tag{finite depth}\lm_{\xi}(\F\cap\G)=\lm_{\xi}\F\cap\lm_{\xi}\G\label{eq:fintedepth}
\end{equation}
 for every $\F,\G\in\mathbb{F}X$, we say that $\xi$ \emph{has finite
depth} (\footnote{Many authors include (\ref{eq:fintedepth}) in the axioms of a convergence
space. Here we follow \cite{DM.book}.}). A convergence satisfying the stronger condition that 
\begin{equation}
\tag{pretopology}\lm_{\xi}(\bigcap_{\D\in\mathbb{D}}\D)=\bigcap_{\D\in\mathbb{D}}\lm_{\xi}\D\label{eq:pretopdeep}
\end{equation}
 for every $\mathbb{D}\subset\mathbb{F}X$ is called a \emph{pretopology}
(and $(X,\xi)$ is a \emph{pretopological space}). 

A subset $A$ of a convergence space $(X,\xi)$ is \emph{$\xi$-open
}if 
\[
\lm_{\xi}\F\cap A\neq\emptyset\then A\in\F,
\]
and $\xi$-\emph{closed} if it is closed for limits, that is,
\[
A\in\F\then\lm_{\xi}\F\subset A,
\]
equivalently,
\[
A\in\F^{\#}\then\lm_{\xi}\F\subset A.
\]

Let $\O_{\xi}$ denote the set of open subsets of $(X,\xi)$ and let
$\O_{\xi}(x)=\{U\in\O_{\xi}:x\in U\}$. Similarly, let $\mathcal{C}_{\xi}$
denote the set of closed subsets of $(X,\xi)$. It turns out that
$\O_{\xi}$ is a topology on $X$\emph{. }Moreover,\emph{ }a topology
$\tau$ on a set $X$ determines a convergence $\xi_{\tau}$ on $X$
by
\[
x\in\lm_{\xi_{\tau}}\F\iff\F\geq\N_{\tau}(x),
\]
where $\N_{\tau}(x)$ denotes the neighborhood filter of $x$ for
$\tau$. In turn, $\xi_{\tau}$ completely determines $\tau$ because
$\tau=\O_{\xi_{\tau}}$, so that we do not distinguish between $\tau$
and $\xi_{\tau}$ and identify topologies with special convergences.
Moreover, a convergence $\xi$ on $X$ determines the topology $\O_{\xi}$
on $X$ which turns out to be the finest among the topologies on $X$
that are coarser than $\xi$. We call it the \emph{topological modification
of $\xi$ }and denote it $\T\xi$. Hence the category $\top$ of topological
spaces and continuous maps is a concretely reflective subcategory
of $\conv$. Let $\cl_{\xi}$ denote the closure operator in $\T\xi$.

A convergence $\xi$ determines an \emph{adherence} operator on families
$\F\subset\mathbb{P}X$, given by
\begin{equation}
\adh_{\xi}\F=\bigcup_{\mathbb{F}X\ni\G\#\F}\lm_{\xi}\G.\label{eq:adherencegen}
\end{equation}
If $\F\in\mathbb{F}X$, then 
\begin{equation}
\adh_{\xi}\F=\bigcup_{\mathbb{F}X\ni\G\#\F}\lm_{\xi}\G=\bigcup_{\G\geq\F}\lm_{\xi}\G=\bigcup_{\U\in\beta(\F)}\lm_{\xi}\U.\label{eq:adherencefilter}
\end{equation}

A convergence $\xi$ on $X$ is a \emph{pseudotopology }if 
\begin{equation}
\tag{pseudotopology}\lm_{\xi}\F=\bigcap_{\U\in\beta(\F)}\lm_{\xi}\U\label{eq:pseudotop}
\end{equation}
 for every filter $\F\in\mathbb{F}X$. The full subcategory of $\conv$
formed by pseudotopological spaces (and continuous maps), denoted
$\pstop$, is concretely reflective and the reflector $\S$ can be
described on objects by
\begin{equation}
\lm_{\S\xi}\F=\bigcap_{\U\in\beta(\F)}\lm_{\xi}\U=\bigcap_{\G\#\F}\adh_{\xi}\G,\label{eq:Sreflector}
\end{equation}
so that adherences determine the pseudotopological reflection.

When restricted to principal filters, (\ref{eq:adherencegen}) defines
a \emph{principal adherence operator }$\adh_{\xi}:\mathbb{P}X\to\mathbb{P}X$
given by
\[
\adh_{\xi}A=\adh_{\xi}\{A\}^{\uparrow}=\bigcup_{A\in\G^{\#}}\lm_{\xi}\G=\bigcup_{A\in\G}\lm_{\xi}\G=\bigcup_{\U\in\beta A}\lm_{\xi}\U.
\]

The full subcategory of $\conv$ formed by pretopological spaces (and
continuous maps), denoted $\prtop$, is concretely reflective and
the reflector $\S_{0}$ can be described on objects by
\begin{equation}
\lm_{\S_{0}\xi}\F=\bigcap_{A\in\F^{\#}}\adh_{\xi}A,\label{eq:S0adh}
\end{equation}
so that the principal adherence determines the pretopological reflection.
In general,
\[
\adh_{\xi}A\subset\cl_{\xi}A
\]
but not conversely. In contrast to $\cl_{\xi}$, the principal adherence
is in general non-idempotent because $\adh_{\xi}A$ need not be closed.
A pretopology is a topology if and only if its principal adherence
operator is idempotent, in which case $\adh_{\xi}=\cl_{\xi}$. Moreover,
the reflector $\T$ from $\conv$ onto $\top$ is given by
\begin{equation}
\lm_{\T\xi}\F=\bigcap_{A\in\F^{\#}}\cl_{\xi}A.\label{eq:Tclosure}
\end{equation}

We refer the reader to \cite{DM.book} for a systematic study of convergence
spaces.

\subsection{Hyperspace convergences and topologies\protect\label{subsec:Hyperspace-convergences-and}}

We gather here the basic necessary facts on hyperspace convergences
from \cite[Section VIII.7]{DM.book}. In a convergence space $(X,\xi)$,
the \emph{reduction }of a family $\FF\subset\mathbb{P}(\C_{\xi})$
is defined as
\[
\rdc\FF:=\left\{ \bigcup_{C\in\F}C:\F\in\FF\right\} .
\]
Note that if $\FF\in\mathbb{F}(\C_{\xi})$ and $\FF\neq\{\emptyset\}^{\uparrow}$,
then $(\rdc\FF)^{\uparrow}\in\mathbb{F}X$. 
\begin{defn}
Let $\FF\in\mathbb{F}(\C_{\xi})$ and \emph{$A\in\C_{\xi}$. }We say
that $\FF$\emph{ upper-Kuratowski converges to} $A$ if $\adh_{\xi}\rdc\FF\subset A$.
In symbols,
\begin{equation}
A\in\lm_{uK}\FF\iff\adh_{\xi}\rdc\FF\subset A.\label{eq:uKConv}
\end{equation}
On the other hand, $\FF$ \emph{lower-Kuratowski converges to $A$
}if $A\subset\adh_{\xi}\rdc(\FF^{\#}).$ In symbols,
\begin{equation}
A\in\lm_{lK}\FF\iff A\subset\adh_{\xi}\rdc(\FF^{\#}).\label{eq:lKconv}
\end{equation}
\end{defn}

Finally, $\FF$ \emph{Kuratowski converges to $A$} if $A\in\lm_{uK}\FF\cap\lm_{lK}\FF$,
in symbols,
\begin{equation}
\lm_{K}\FF=\lm_{uK}\FF\cap\lm_{lK}\FF.\label{eq:Kconv}
\end{equation}

The upper-Kuratowski convergence is always pseudotopological \cite[(VIII.7.4) and Theorem VIII.6.2]{DM.book}.
If $(X,\xi)$ is a topological space, then the lower-Kuratowski convergence
is topological \cite[Prop. VIII.7.5]{DM.book} and the Kuratowski
convergence is a Hausdorff pseudotopology \cite[Prop. VIII.7.7]{DM.book}.
\global\long\def\e{\operatorname{e}}%

If $F\subset X$, we denote the \emph{erected set} by $\e F=\{C\in\C_{X}:C\subset F\}$
and if $\F\in\mathbb{F}X$ then $e^{\natural}\F=\{eF:e\in\F\}$ is
a filter-base on $\C_{X}$ generating a filter called \emph{erected
filter of }$\F$. Note that 
\[
\F\in\mathbb{F}X\then\rdc(e^{\natural}\F)\geq\F
\]
though $\rdc(e^{\natural}\F)$ may be degenerate, and 
\[
\FF\in\mathbb{F}\C_{X}\then\FF\geq e^{\natural}(\rdc\FF).
\]

A subset $\A$ of $\C_{X}$ is \emph{saturated} if every closed subset
of $\rdc\A=\bigcup\A$ belongs to $\A$. A filter is \emph{saturated
}if it has a filter-base composed of saturated sets. Since every erected
set is saturated and $\rdc(e^{\natural}(\rdc\FF))=\rdc\FF$, we conclude
that any convergence structure on $\C_{X}$ solely defined in terms
of $\rdc\FF$ is based in saturated filters.

Recall that the \emph{cocompact topology} or \emph{upper-Fell topology}
on the set $\C_{X}$ of closed subsets of a topological space is generated
by the subbase given by sets of the form
\[
K^{+}=\{A\in\C_{X}:A\cap K=\emptyset\},
\]
where $K$ runs over compact subsets of $X$. The \emph{lower Vietoris
}topology is generated by the subbase given by sets of the form
\[
U^{-}=\C_{X}\cap U^{\#}=\{A\in\C_{X}:A\cap U\neq\emptyset\},
\]
where $U\in\O_{X}$ and coincides with the lower Kuratowski topology.
The \emph{Fell topology }is the supremum of these two topologies.

Note also:
\begin{lem}
\label{lem:grillrdc} If $X$ is a topological space, $\FF\in\mathbb{F}(\C_{X})$,
and $B\in\C_{X}$ then

\begin{equation}
B\in(\rdc\FF)^{\#}\iff B^{-}=\{C\in\C_{X}:C\cap B\neq\emptyset\}\in\FF^{\#};\label{eq:BingrillofrdcF}
\end{equation}
Moreover, if $\FF$ is a saturated filter and $\H$ is a saturated
subset of $\C_{X}$, then
\[
\H\in\FF^{\#}\iff\rdc\H\in(\rdc\FF)^{\#}.
\]

On the other hand, 
\begin{equation}
B\in(\rdc\FF^{\#})^{\#}\iff B^{-}\in\FF.\label{eq:BingrillofrdcFgrill}
\end{equation}
\end{lem}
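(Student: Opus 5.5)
The plan is to unwind the definitions of grill, reduction and $B^{-}$, and to use that $\FF$ is a filter (closed under supersets and finite intersections) on $\C_{X}$. Closedness of $B$ is used only to make $B^{-}$ and the singletons $\{C\}$ legitimate elements of $\C_{X}$; every closed set $C$ is a member of $\C_{X}$.

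For (\ref{eq:BingrillofrdcF}), $B\in(\rdc\FF)^{\#}$ means that for every $\F\in\FF$ the set $B\cap\bigcup\F$ is nonempty. Equivalently, each $\F\in\FF$ contains some $C$ with $C\cap B\neq\emptyset$, i.e. $\F\cap B^{-}\neq\emptyset$ for all $\F\in\FF$. That is exactly $B^{-}\in\FF^{\#}$, and both directions are immediate.

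For (\ref{eq:BingrillofrdcFgrill}):
\begin{itemize}
\item[($\Leftarrow$)] Suppose $B^{-}\in\FF$ and $\H\in\FF^{\#}$. Then $\H\cap B^{-}\neq\emptyset$, so some $C\in\H$ meets $B$, and hence $B\cap\rdc\H\neq\emptyset$. Thus $B$ meshes with $\rdc\FF^{\#}$.
\item[($\Rightarrow$)] Suppose $B^{-}\notin\FF$. Then the complement $\H=\C_{X}\setminus B^{-}=\{C\in\C_{X}:C\cap B=\emptyset\}$ lies in $\FF^{\#}$, since $\FF$ is a filter and $\A\notin\FF$ iff $\C_{X}\setminus\A\in\FF^{\#}$. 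But $\rdc\H=\bigcup\H$ is disjoint from $B$. So $B\notin(\rdc\FF^{\#})^{\#}$.
\end{itemize}

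For the saturated statement:
\begin{itemize}
\item[($\Rightarrow$)] If $\H\in\FF^{\#}$ and $\F\in\FF$, pick $C\in\H\cap\F$. Then $C\subset\rdc\H\cap\rdc\F$, and $C\neq\emptyset$ as long as $\emptyset\notin\H$, or we treat $\emptyset$ separately. This direction needs no saturation, modulo the empty-set issue.
\item[($\Leftarrow$)] This is the main point. Let $\F\in\FF$; we must show $\H\cap\F\neq\emptyset$. Since $\FF$ has a base of saturated sets, we may shrink $\F$ and assume it is saturated. From $\rdc\H\cap\rdc\F\neq\emptyset$ pick a point $x$ in the intersection. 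In a topological space (at least a $T_1$ one), $\{x\}$ is closed, hence $\{x\}\in\C_{X}$. Because $\{x\}$ is a closed subset of $\rdc\H$, saturation of $\H$ gives $\{x\}\in\H$. Likewise $\{x\}$ is a closed subset of $\rdc\F$, so $\{x\}\in\F$. Hence $\H\cap\F\neq\emptyset$.
\end{itemize}

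The obstacle is the closedness of singletons. Without $T_1$ I would instead use $\cl\{x\}$, which is closed, but it need not lie inside $\rdc\H$. To fix this, pick $C\in\H$ with $x\in C$; then $\cl\{x\}\subset C\subset\rdc\H$ because $C$ is closed. Similarly pick $D\in\F$ with $x\in D$, so $\cl\{x\}\subset D\subset\rdc\F$. Saturation then gives $\cl\{x\}\in\H\cap\F$ without any separation axiom.

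The empty set in $\H$ needs a separate convention: if $\emptyset\in\H$, then $\emptyset\in\F$ for saturated $\F$. I would note this degenerate case, assuming the paper's conventions exclude it or treat it consistently.
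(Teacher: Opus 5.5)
Your arguments for (\ref{eq:BingrillofrdcF}) and (\ref{eq:BingrillofrdcFgrill}) are the paper's. The paper phrases the first contrapositively and the second as $B^{-}\in\FF^{\#\#}=\FF$, which is exactly your complement step. Your proof that $\rdc\H\in(\rdc\FF)^{\#}$ implies $\H\in\FF^{\#}$ is the paper's proof with its one-line ``$\F\cap\H=\emptyset$, so $\rdc\F\cap\rdc\H=\emptyset$ by saturation'' spelled out. Using $\cl\{x\}\subset C\cap D$ instead of $\{x\}$ is precisely what makes that step valid without any separation axiom.

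Where you should be firmer is the empty set. It is not a degenerate case that the paper's conventions might exclude. Since $\emptyset$ is a closed subset of $\rdc\A$, every saturated $\A\subset\C_X$ contains $\emptyset$. For a saturated filter $\FF$, $\emptyset$ therefore lies in every member of $\FF$ (each contains a saturated base element) and also in $\H$, so $\H\in\FF^{\#}$ holds unconditionally. Consequently the forward implication, which the paper calls clear, is false as literally stated, and the reverse one is trivial. Two counterexamples:
\begin{itemize}
\item the coarsest filter $\FF=\{\C_X\}$ with $\H=\{\emptyset\}$;
\item with $\rdc\H\neq\emptyset$: $X=\{a,b\}$ discrete, $\FF=\{\{\emptyset,\{a\}\}\}^{\uparrow}$ and $\H=\{\emptyset,\{b\}\}$. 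Here $\H\in\FF^{\#}$, but $\rdc\H=\{b\}$ misses $\{a\}\in\rdc\FF$.
\end{itemize}
The meaningful claim is $\H\setminus\{\emptyset\}\in\FF^{\#}\iff\rdc\H\in(\rdc\FF)^{\#}$ (equivalently, work with nonempty closed sets). Your two arguments prove exactly this: in one direction the chosen $C\in\H\cap\F$ is nonempty, and in the other $\cl\{x\}$ is a nonempty common member of $\H$ and $\F$. You should state this corrected version rather than defer to unspecified conventions.
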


\begin{proof}
$B\notin(\rdc\FF)^{\#}$ if and only if there is $\F\in\FF$ with
$\rdc\F\cap B=\emptyset$, equivalently, $\F\subset\{C\in\C_{X}:C\cap B=\emptyset\}$,
if and only if $\{C\in\C_{X}:C\cap B=\emptyset\}\in\FF$, equivalently,
$\{C\in\C_{X}:C\cap B\neq\emptyset\}\notin\FF^{\#}$. 

It is clear that $\rdc\H\in(\rdc\FF)^{\#}$ whenever $\H\in\FF^{\#}$.
If now $\H\notin\FF^{\#}$, $\H$ and $\FF$ are saturated, there
is a saturated $\F\in\FF$ with $\F\cap\H=\emptyset$ so that $\rdc\F\cap\rdc\H=\emptyset$
by saturation. 

Finally, $B\in(\rdc\FF^{\#})^{\#}$ if and only if $B\cap\rdc\H\neq\emptyset$
whenever $\H\in\FF^{\#}$, if and only if 
\[
\forall\H\in\FF^{\#},\exists C\in\H:B\cap C\neq\emptyset,
\]
if and only if $B^{-}\in\FF^{\#\#}=\FF$.
\end{proof}
Note that we have the following variant of \cite[Prop. VIII.7.5]{DM.book}:
\begin{prop}
\label{prop:pretoplK} If $(X,\xi)$ is a pretopological space then
$(\C_{\xi},\lim_{lK})$ is also a pretopological space and 
\[
\V_{lK}(A)=\bigvee_{x\in A}\{V^{-}:V\in\V_{\xi}(x)\}^{\uparrow},
\]
for every $A\in\C_{\xi}$.
\end{prop}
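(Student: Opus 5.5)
The plan is to compute $\lim_{lK}\FF$ directly from the definition (\ref{eq:lKconv}), using the vicinity filters of the pretopology $\xi$. I will show that $A\in\lim_{lK}\FF$ if and only if $\FF\geq\V_{lK}(A)$, where $\V_{lK}(A)$ denotes the family displayed in the statement. Pretopologicity of $\lim_{lK}$ then follows formally.

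First I record the adherence in a pretopology. Since $x\in\lim_{\xi}\G$ iff $\G\geq\V_{\xi}(x)$, we have $x\in\adh_{\xi}B$ iff there is a filter $\G\ni B$ with $\G\geq\V_{\xi}(x)$. This holds iff $B\in\V_{\xi}(x)^{\#}$, that is, iff $B$ meets every $V\in\V_{\xi}(x)$. Hence $A\subset\adh_{\xi}\rdc(\FF^{\#})$ iff for every $x\in A$ and every $V\in\V_{\xi}(x)$, $V$ meets $\rdc\H$ for every $\H\in\FF^{\#}$. In other words, $V\in(\rdc\FF^{\#})^{\#}$.

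Next I need a version of (\ref{eq:BingrillofrdcFgrill}) for an arbitrary subset $V$, not necessarily closed, and for $X$ only pretopological. So I will redo its short proof rather than cite Lemma \ref{lem:grillrdc}. The condition $V\in(\rdc\FF^{\#})^{\#}$ means that every $\H\in\FF^{\#}$ contains some $C$ with $C\cap V\neq\emptyset$. Equivalently, $V^{-}=\{C\in\C_{\xi}:C\cap V\neq\emptyset\}$ meets every member of $\FF^{\#}$, i.e., $V^{-}\in\FF^{\#\#}=\FF$. This step uses only $\FF^{\#\#}=\FF$ for filters, so no topological hypothesis is needed. Combining, $A\in\lim_{lK}\FF$ iff $V^{-}\in\FF$ for all $x\in A$ and $V\in\V_{\xi}(x)$. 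Since $\FF$ is a filter, this is iff $\FF\geq\V_{lK}(A)$, where $\V_{lK}(A)$ is the filter generated by these sets. The supremum may be degenerate, in which case no filter converges to $A$; this is consistent with the claim. For $A=\emptyset$ the supremum is $\{\C_{\xi}\}$ and every filter converges, matching $\emptyset\subset\adh_{\xi}\rdc(\FF^{\#})$.

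Finally I check the axioms. Centeredness holds because $A\in V^{-}$ whenever $x\in A$ and $x\in V$; here I use $x\in V$ for every $V\in\V_{\xi}(x)$, which follows from centeredness of $\xi$. Hence $\{A\}^{\uparrow}\geq\V_{lK}(A)$ and is nondegenerate. Monotonicity is immediate. The pretopology axiom (\ref{eq:pretopdeep}) holds because $\bigcap_{i}\FF_{i}\geq\V_{lK}(A)$ iff each $\FF_{i}\geq\V_{lK}(A)$, as each generating set lies in every $\FF_{i}$.

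The only delicate point is the adaptation of (\ref{eq:BingrillofrdcFgrill}) to non-closed $V$ in a non-topological base space. I expect this to be routine, since its proof never used closedness.
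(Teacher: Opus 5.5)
Your proof is correct and follows the paper's argument: you characterize $A\subset\adh_{\xi}\rdc(\FF^{\#})$ as $\V_{\xi}(x)\#\rdc(\FF^{\#})$ for every $x\in A$, translate this through (\ref{eq:BingrillofrdcFgrill}) into $V^{-}\in\FF$, and read off the vicinity filter. Your re-derivation of (\ref{eq:BingrillofrdcFgrill}) for arbitrary $V$ over a merely pretopological base is a worthwhile bit of care, since Lemma \ref{lem:grillrdc} is stated only for topological $X$ and closed $B$ (its proof uses neither); also, your centeredness check already shows that every generating set $V^{-}$ contains $A$, so the supremum is never degenerate and that hedge can be dropped.
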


\begin{proof}
Recall that $A\in\lm_{lK}\FF$ if $A\subset\adh_{\xi}\rdc(\FF^{\#})$,
that is, if
\[
\forall x\in A,\V_{\xi}(x)\#\rdc(\FF^{\#}),
\]
because $\xi$ is a pretopology. In view of (\ref{eq:BingrillofrdcFgrill}),
this is equivalent to 
\[
\{V^{-}:V\in\V_{\xi}(x),x\in A\}\subset\FF,
\]
and $\{V^{-}:V\in\V_{\xi}(x),x\in A\}$ has the finite intersection
property since $X\in\cap_{V\in\V_{\xi}(x),x\in A}V^{-}$. Hence $\bigvee_{x\in A}\{V^{-}:V\in\V_{\xi}(x)\}^{\uparrow}$
is a proper filter and is the vicinity filter of $A$ for $\lim_{lK}$.
\end{proof}

\subsubsection{A word on nets}

Recall that a net $\X=\left\langle x_{\alpha}\right\rangle {}_{\alpha\in\Lambda}$
on a set $X$ is a map $\X:\Lambda\to X$ where $\Lambda$ is directed
set. If $X$ is a topological space we say that $\left\langle x_{\alpha}\right\rangle {}_{\alpha\in\Lambda}$
\emph{converges to }$t$ if for every $V\in\N_{X}(t)$ there is $\alpha\in\Lambda$
such that $x_{\beta}\in V$ for every $\beta\in\Lambda$ with $\beta\geq\alpha$,
that is, if 
\[
\F_{\X}\geq\N_{X}(t)
\]
where $\F_{\X}$ is the \emph{associated filter }given by
\[
\F_{\X}:=\{\{x_{\beta}:\beta\geq_{\Lambda}\alpha\}:\alpha\in\Lambda\}^{\uparrow_{X}}.
\]

Hence we generally define the convergence of a net $\X$ on a convergence
space $(X,\xi)$ by
\[
\lm_{\xi}\X=\lm_{\xi}\F_{\X}.
\]

Recall as well that a subset $R$ of a directed set $\Lambda$ is
called \emph{residual }if there is $\alpha\in\Lambda$ with $\{\beta\geq_{\Lambda}\alpha\}\subset R$
and $C\subset\Lambda$ is \emph{cofinal }if $\Lambda\setminus C$
is not residual, that is, if for each $\alpha\in\Lambda$, there is
$\beta\in C$ with $\beta\geq\alpha$. Let $\mathfrak{R}(\Lambda)$
denote the set of residual subsets of $\Lambda$ and $\mathfrak{C}(\Lambda)$
denote the set of cofinal subsets of $\Lambda$. It is clear that
given a net $\A=\left\langle A_{\alpha}\right\rangle _{\alpha\in\Lambda}$
on $\C_{c(\lambda)}$,
\begin{eqnarray}
R\in\mathfrak{R}(\Lambda) & \then & \{A_{\alpha}:\alpha\in R\}\in\FF_{\A}\label{eq:residual}\\
C\in\mathfrak{C}(\Lambda) & \then & \{A_{\alpha}:\alpha\in C\}\in\FF_{\A}^{\#}\label{eq:cofinal}
\end{eqnarray}
and the converses are true for a one-to-one (or finite-to-one) net.

It is worth noting that convergence of nets for hyperspace structures
have often been defined or characterized directly for nets in terms
seemingly different. For instance, the seminal book \cite{Beer},
defines the following given a net $\A$ of subsets of a Hausdorff
space $X$:
\global\long\def\Li{\operatorname{Li}}%
\global\long\def\Ls{\operatorname{Ls}}%
\[
\Li\A=\{x\in X:\forall V\in\N_{X}(x)\exists R\in\mathfrak{R}(\Lambda),V\in\{A_{\alpha}:\alpha\in R\}^{\#}\}
\]
and 
\[
\Ls\A=\{x\in X:\forall V\in\N_{X}(x)\exists C\in\mathfrak{C}(\Lambda),V\in\{A_{\alpha}:\alpha\in C\}^{\#}\}
\]
 and says that the net $\A$ \emph{Kuratowski-Painlevé converges to
$C$ }if $\Li\A=\Ls\A=C$.
\begin{prop}
If $\A=\left\langle A_{\alpha}\right\rangle _{\alpha\in\Lambda}$
is a net of subsets of a topological space $X$ then 
\[
\Li\A=\adh(\rdc\FF_{\A}^{\#})
\]
and 
\[
\Ls\A=\adh(\rdc\FF_{\A}).
\]
\end{prop}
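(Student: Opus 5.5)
Both identities reduce to comparing, at a fixed point $x$, a condition on neighborhoods $V\in\N_{X}(x)$ with a condition on the filter $\FF_{\A}$ or its grill. Since $X$ is topological (hence pretopological), $x\in\adh\B$ for a family $\B$ if and only if $\N_{X}(x)\#\B$, that is, $V\in\B^{\#}$ for every $V\in\N_{X}(x)$. It therefore suffices to show, for each open (or arbitrary) $V$:
\[
V\in(\rdc\FF_{\A}^{\#})^{\#}\iff\exists R\in\mathfrak{R}(\Lambda),\ V\in\{A_{\alpha}:\alpha\in R\}^{\#},
\]
and
\[
V\in(\rdc\FF_{\A})^{\#}\iff\exists C\in\mathfrak{C}(\Lambda),\ V\in\{A_{\alpha}:\alpha\in C\}^{\#}.
\]
Here $V\in\{A_{\alpha}:\alpha\in R\}^{\#}$ means $V\cap A_{\alpha}\neq\emptyset$ for all $\alpha\in R$. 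The $A_{\alpha}$ are arbitrary subsets rather than closed ones, so I will not rely on the saturation part of Lemma \ref{lem:grillrdc}. I will argue directly with the set $V^{-}=\{\alpha:A_{\alpha}\cap V\neq\emptyset\}$, or more precisely with its image family $\{A_{\alpha}:A_{\alpha}\cap V\neq\emptyset\}$.

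For $\Ls$: $V\in(\rdc\FF_{\A})^{\#}$ means that for every $\alpha$, $V$ meets $\bigcup_{\beta\geq\alpha}A_{\beta}$. It suffices to check tails, since $\FF_{\A}$ is generated by the tails $\{A_{\beta}:\beta\geq\alpha\}$ and $\rdc$ is monotone. This says that for each $\alpha$ there is $\beta\geq\alpha$ with $A_{\beta}\cap V\neq\emptyset$. Equivalently, $C:=\{\beta:A_{\beta}\cap V\neq\emptyset\}$ is cofinal. Conversely, if some cofinal $C$ has $V\cap A_{\alpha}\neq\emptyset$ for all $\alpha\in C$, then every tail contains such an $\alpha$. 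This gives the $\Ls$ identity.

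For $\Li$: by the computation in the last part of the proof of Lemma \ref{lem:grillrdc}, which uses no closedness, $V\in(\rdc\FF^{\#})^{\#}$ if and only if the family $\{B:B\cap V\neq\emptyset\}$, restricted to the relevant sets, belongs to $\FF^{\#\#}=\FF$. Concretely, $\{A_{\alpha}:A_{\alpha}\cap V\neq\emptyset\}\in\FF_{\A}$, so it contains some tail $\{A_{\beta}:\beta\geq\alpha\}$. Then $R=\{\beta:\beta\geq\alpha\}$ is residual and $V$ meets each $A_{\beta}$ with $\beta\in R$. Conversely, given such an $R$, the set $\{A_{\alpha}:\alpha\in R\}$ lies in $\FF_{\A}$ by (\ref{eq:residual}). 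Every $\H\in\FF_{\A}^{\#}$ then meets it, so $\rdc\H$ meets $V$.

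The main obstacle is the direction of (\ref{eq:residual}) and (\ref{eq:cofinal}) that only holds for one-to-one nets. The membership $\{A_{\alpha}:\alpha\in R\}\in\FF_{\A}$ does not imply that $R$ is residual. I circumvent this by always passing to the index set $\{\beta:A_{\beta}\cap V\neq\emptyset\}$, which is defined from the property of $A_{\beta}$ and not from the set of values. Membership of the value-family in $\FF_{\A}$ then does force a tail of indices into it. For the grill step, I use $\FF^{\#\#}=\FF$ with the set $\{B\subset X:B\cap V\neq\emptyset\}\cap\{A_{\alpha}\}_{\alpha}$ in place of $B^{-}$.
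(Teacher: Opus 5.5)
Your proof is correct and follows essentially the same route as the paper. You reduce membership in the adherence to $\N_X(x)\#\B$, use the grill characterizations (\ref{eq:BingrillofrdcF}) and (\ref{eq:BingrillofrdcFgrill}) with $V^{-}$, and translate membership of $V^{-}$ in $\FF_{\A}$ or $\FF_{\A}^{\#}$ into the residual/cofinal conditions. You also note that these grill computations do not use closedness, and that the index set $\{\beta:A_\beta\cap V\neq\emptyset\}$ avoids the one-to-one caveat of (\ref{eq:residual})--(\ref{eq:cofinal}); both points make explicit what the paper's ``equivalently'' steps leave implicit.
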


\begin{proof}
Note that in a topological space $x\in\adh(\rdc\FF_{\A}^{\#})$ if
and only if $\N(x)\#\rdc\FF_{\A}^{\#}$, equivalently, in view of
(\ref{eq:BingrillofrdcFgrill}),
\[
\{V^{-}:V\in\N(x)\}\subset\FF_{\A},
\]
equivalently, $x\in\Li\A$. 

Similarly, $x\in\adh\rdc\FF_{\A}$ if and only if $\N(x)\#\rdc\FF_{\A}$,
which, in view of (\ref{eq:BingrillofrdcF}), is equivalent to $\{V^{-}:V\in\N(x)\}\subset\FF_{A}^{\#}$,
equivalently, $x\in\Ls\A$.
\end{proof}
Hence, since $\adh\rdc\FF^{\#}\subset\adh\rdc\FF$ for every filter
$\FF$ on the powerset of $X$, $\text{\ensuremath{\A}}$ Kuratowski-Painlevé
converges to $C$ if and only if 
\[
\adh(\rdc\FF_{\A})\subset C\subset\adh(\rdc\FF_{\A}^{\#}).
\]
In other words, if we consider $C$ a closed subset and $\A$ of net
of closed subsets of $X$, then $\A$ Kuratowski-Painlevé converges
to $C$ if and only $C\in\lim_{uK}\FF_{\A}\cap\lim_{lK}\FF_{A}=\lim_{K}\FF_{A}$.

\section{(pre)Convergence approach spaces}

A \emph{preconvergence approach space }$(X,\lambda)$ is given by
a function $\lambda:\mathbb{F}X\to[0,\infty]^{X}$ satisfying 
\begin{equation}
\F\leq\G\then\lambda(\F)\geq\lambda(\G)\label{eq:CAPmonotone}
\end{equation}
for the pointwise order of $[0,\infty]^{X}$ induced by the usual
order on $[0,\infty]$. The interpretation is that $\lambda(\F)(x)$
measures how well $\F$ converges to $x$, with $\lambda(\F)(x)=0$
indicating full convergence while $\lambda(\F)(x)=\infty$ indicates
no convergence.

If moreover 
\begin{equation}
\tag{precentered}\lambda(\{x\}^{\uparrow})(x)<\infty,\label{eq:CAPprecentered}
\end{equation}
for every $x\in X$, we say that $(X,\lambda)$ is \emph{precentered}.
It is called \emph{centered }if 
\begin{equation}
\tag{centered}\lambda(\{x\}^{\uparrow})(x)=0,\label{eq:CAPfullycentered}
\end{equation}
for all $x\in X$. A centered preconvergence approach space is called
a \emph{convergence approach space}. 

Additionally, 
\begin{equation}
\lambda(\F\cap\G)=\lambda(\F)\vee\lambda(\G),\label{eq:capfinitedepth}
\end{equation}
for every $\F,\G\in\mathbb{F}X$ is usually required of convergence
approach spaces (e.g., \cite{lowe88,Lowen88,towers}), though we will
treat it as an additional property, called \emph{finite depth}. 

A \emph{contraction} is a function $f:(X,\lambda_{X})\to(Y,\lambda_{Y})$
satisfying 
\[
\lambda_{Y}(f[\F])\circ f\leq\lambda_{X}(\F)
\]
 for the pointwise order induced by that of $[0,\infty]$, for every
$\F\in\mathbb{F}X$. Let $\cont(X,Y)$ denote the set of all such
morphisms. Let $\Cap$ denote the resulting category with these morphisms
and convergence approach spaces as objects. Let $\Cap_{*}$ denote
the larger category of preconvergence approach spaces and contractions
and $\Cap_{c}$ denotes its full subcategory of precentered preconvergence
approach spaces.

Note that $\Cap$ is a topological category (\footnote{$\Cap$ is also cartesian-closed e.g., \cite{lowe88}.}),
where initial structures are as follows: If $f_{i}:X\to(Y_{i},\lambda_{i})$
then the $\Cap$-initial structure on $X$ is given by 
\begin{equation}
\lambda_{X}(\F)(x)=\bigvee_{i\in I}\lambda_{i}(f_{i}[\F])(f_{i}(x)).\label{eq:initialCAP}
\end{equation}

\begin{rem}
Note that this is also the initial structure in $\Cap_{*}$, which
is thus topological. On the other hand, $\Cap_{c}$ is not topological.
For instance, suppose $Y_{i}=\{y\}$ for all $i\in\mathbb{N}$ and
$\lambda_{i}(y)(y)=i$. Then $(Y_{i},\lambda_{i})$ is precentered
for each $i$ but if $f_{i}:X\to(Y_{i},\lambda_{i})$ then the structure
given by (\ref{eq:initialCAP}) on $X$ is not.
\end{rem}

If $(X,\lambda)$ is a preconvergence approach space and $\F\subset\mathbb{P}X$
then the \emph{adherence function of $\F$ }is the function $\adh_{\lambda}\F:X\to V$
(or $\adh\F$ if $\lambda$ is unambiguous) defined by
\[
\adh_{\lambda}\F(\cdot)=\bigwedge_{\G\#\F}\lambda(\G)(\cdot).
\]
If $\F\in\mathbb{F}X$ then 
\[
\adh_{\lambda}\F(\cdot)=\bigwedge_{\G\#\F}\lambda(\G)(\cdot)=\bigwedge_{\U\in\beta(\F)}\lambda\U(\cdot)
\]
and in particular, if $A\subset X$,
\[
\adh_{\lambda}A(\cdot):=\adh_{\lambda}\{A\}^{\uparrow}(\cdot)=\bigwedge_{A\in\F^{\#}}\lambda\F(\cdot)=\bigwedge_{\U\in\beta A}\lambda\U(\cdot).
\]

If $A\subset X$, let $\theta_{A}(x)=\begin{cases}
0 & x\in A\\
\infty & x\notin A
\end{cases}$ denote the \emph{indicator function of $A$.}

Given a preconvergence approach space $(X,\lambda)$, $A\subset X$,
and $\epsilon\in[0,\infty]$, we will use the notation
\[
A^{(\epsilon)}:=\adh^{-1}[0,\epsilon]=\left\{ x\in X:\adh A(x)\leq\epsilon\right\} .
\]

\begin{lem}
\label{lem:centered-1} The following are equivalent for a preconvergence
approach space $(X,\lambda)$:
\begin{enumerate}
\item $(X,\lambda)$ is centered, hence a convergence approach space;
\item $\adh A\leq\theta_{A}$ for all $A\subset X$;
\item $A\subset A^{(\epsilon)}$ for every $A\subset X$ and every $\epsilon\in[0,\infty)$;
\item $A\subset A^{(0)}$ for every $A\subset X$.
\end{enumerate}
\end{lem}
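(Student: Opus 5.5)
I will prove the cycle of implications (1)$\Rightarrow$(2)$\Rightarrow$(3)$\Rightarrow$(4)$\Rightarrow$(1), working throughout from the formula
\[
\adh_{\lambda}A(x)=\bigwedge_{A\in\F^{\#}}\lambda(\F)(x)=\bigwedge_{\U\in\beta A}\lambda(\U)(x)
\]
given just before the statement.

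For (1)$\Rightarrow$(2), take $x\in A$. Then $A\in\{x\}^{\uparrow}$, and since $\{x\}^{\uparrow}$ is an ultrafilter we have $\{x\}^{\uparrow}\in\beta A$. Hence
\[
\adh A(x)\leq\lambda(\{x\}^{\uparrow})(x)=0=\theta_{A}(x)
\]
by centeredness. If $x\notin A$, then $\theta_{A}(x)=\infty$ and the inequality holds trivially.

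For (2)$\Rightarrow$(3), take $x\in A$. Then $\adh A(x)\leq\theta_{A}(x)=0\leq\epsilon$, so $x\in A^{(\epsilon)}$. The implication (3)$\Rightarrow$(4) is the special case $\epsilon=0$.

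For (4)$\Rightarrow$(1), fix $x$ and apply (4) to $A=\{x\}$. This gives $\adh\{x\}(x)=0$. The only ultrafilter containing $\{x\}$ is $\{x\}^{\uparrow}$, so $\beta\{x\}=\{\{x\}^{\uparrow}\}$. Therefore $\adh\{x\}(x)=\lambda(\{x\}^{\uparrow})(x)$, and this value is $0$, which is exactly centeredness.

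No step is a real obstacle. The only point needing care is (4)$\Rightarrow$(1): identifying $\beta\{x\}$ with the singleton $\{\{x\}^{\uparrow}\}$ turns the infimum defining $\adh\{x\}(x)$ into the single value $\lambda(\{x\}^{\uparrow})(x)$.
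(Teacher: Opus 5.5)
Your proposal is correct and follows the paper's proof essentially step for step: the same cycle (1)$\Rightarrow$(2)$\Rightarrow$(3)$\Rightarrow$(4)$\Rightarrow$(1), with the principal ultrafilter $\{x\}^{\uparrow}$ used for the first implication and $A=\{x\}$ used for the last. Your remark that $\beta\{x\}=\{\{x\}^{\uparrow}\}$ simply makes explicit the ``equivalently'' that the paper uses when it passes from $\adh\{x\}(x)=0$ to $\lambda(\{x\}^{\uparrow})(x)=0$.
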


\begin{proof}
$(1)\then(2)$: If $\lambda$ is centered and $A\subset X$ than $A\in\{x\}^{\uparrow}$
for all $x\in A$ and thus $\adh A(x)=0$, equivalently, $\adh A\leq\theta_{A}$.
$(2)\then(3)$: If $\adh A\leq\theta_{A}$ and $\epsilon\in[0,\infty)$,
then $\theta_{A}^{-1}([0,\epsilon])\subset\adh^{-1}A([0,\epsilon])$,
that is, $A\subset A^{(\epsilon)}$. $(3)\then(4)$ is clear and $(4)\then(1)$
because given $x\in X,$ letting $A=\{x\}$ in $(4)$ yields $\{x\}\subset\{x\}^{(0)}$
so that $\adh\{x\}(x)=0$, equivalently, $\lambda(\{x\}^{\uparrow})(x)=0$.
\end{proof}
Similarly,
\begin{lem}
\label{lem:precentered} Let $(X,\lambda)$ be a a preconvergence
approach space. Then $(X,\lambda)$ is precentered if and only if
\[
A\subset\bigcup_{\epsilon\in[0,\infty)}A^{(\epsilon)}
\]
 for every $A\subset X$.
\end{lem}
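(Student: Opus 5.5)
The plan is to imitate the proof of Lemma \ref{lem:centered-1}, working with the principal adherence $\adh A(x)=\bigwedge_{\U\in\beta A}\lambda\U(x)$ and the sublevel sets $A^{(\epsilon)}=\{x:\adh A(x)\leq\epsilon\}$. First observe the identity
\[
\bigcup_{\epsilon\in[0,\infty)}A^{(\epsilon)}=\{x\in X:\adh A(x)<\infty\}.
\]
Indeed, if $\adh A(x)<\infty$, then $x\in A^{(\epsilon)}$ for $\epsilon=\adh A(x)$, and the converse inclusion is immediate. With this in hand, the statement reduces to the equivalence between precenteredness and the condition $\adh A(x)<\infty$ for every $x\in A$.

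For the forward direction, assume $\lambda$ is precentered and let $x\in A$. Then $A\in\{x\}^{\uparrow}$, so the principal ultrafilter $\{x\}^{\uparrow}$ belongs to $\beta A$. Consequently
\[
\adh A(x)\leq\lambda(\{x\}^{\uparrow})(x)<\infty,
\]
and therefore $x\in A^{(\epsilon)}$ with $\epsilon=\lambda(\{x\}^{\uparrow})(x)\in[0,\infty)$.

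For the converse, fix $x\in X$ and apply the hypothesis to $A=\{x\}$. This gives some finite $\epsilon$ with $\adh\{x\}(x)\leq\epsilon$. The only ultrafilter containing $\{x\}$ is $\{x\}^{\uparrow}$, so $\beta\{x\}=\{\{x\}^{\uparrow}\}$. Hence $\adh\{x\}(x)=\lambda(\{x\}^{\uparrow})(x)\leq\epsilon<\infty$, which is exactly the precentered condition.

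There is no real obstacle here. The only points needing care are rewriting the union over finite $\epsilon$ as the set where $\adh A$ is finite, and the description $\beta\{x\}=\{\{x\}^{\uparrow}\}$ used in the converse. The latter is the same step that closes the proof of Lemma \ref{lem:centered-1}.
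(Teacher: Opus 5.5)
Your proof is correct and follows essentially the same argument as the paper. The forward direction uses $\{x\}^{\uparrow}\in\beta A$ to place $x$ in $A^{(\epsilon_x)}$ with $\epsilon_x=\lambda(\{x\}^{\uparrow})(x)$, and the converse specializes to $A=\{x\}$; your explicit identity $\bigcup_{\epsilon<\infty}A^{(\epsilon)}=\{\adh A<\infty\}$ and the remark $\beta\{x\}=\{\{x\}^{\uparrow}\}$ simply spell out steps the paper leaves implicit.
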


\begin{proof}
If $\lambda$ is precentered and $A\subset X$ then for every $x\in A$,
$\epsilon_{x}:=\lambda(\{x\}^{\uparrow})(x)$ is finite and $x\in A^{(\epsilon_{x})}$.
Conversely, if $A\subset\bigcup_{\epsilon\in[0,\infty)}A^{(\epsilon)}$
for every $A$, then in particular, given $x\in X$, for $A=\{x\}$
we have $\{x\}\subset\bigcup_{\epsilon\in[0,\infty)}\{x\}^{(\epsilon)}$
so that there is $\epsilon\in[0,\infty)$ with $x\in\{x\}^{(\epsilon)}$,
equivalently, $\lambda(\{x\}^{\uparrow})(x)\leq\epsilon<\infty$ and
thus $\lambda$ is precentered.
\end{proof}
A convergence approach space $(X,\lambda)$ is a called a \emph{pre-approach
space }(e.g., \cite{Lowen97}) if for any $\mathbb{D}\subset\mathbb{F}X$,
\begin{equation}
\lambda(\bigcap_{\D\in\mathbb{D}}\D)(\cdot)=\bigvee_{\D\in\mathbb{D}}\lambda\D(\cdot).\tag{PrAp}\label{eq:preAP}
\end{equation}

Note that pre-approach spaces are determined by adherence functions
of principal filters. Namely, in a pre-approach space, e.g., \cite{mynardmeasureCAP},
\begin{equation}
\lambda(\F)(\cdot)=\bigvee_{A\in\F^{\#}}\adh_{\lambda}A(\cdot),\label{eq:limPRAP}
\end{equation}
and
\begin{equation}
\adh_{\lambda}\F(\cdot)=\bigvee_{F\in\F}\adh_{\lambda}F(\cdot).\label{eq:adhPRAP}
\end{equation}

If (\ref{eq:preAP}) is only true for subsets $\mathbb{D}$ of $\mathbb{U}X$
of the form $\beta(\F)$, that is, if 
\begin{equation}
\tag{PsAp}\lambda(\F)(\cdot)=\bigvee_{\U\in\beta(\F)}\lambda\U(\cdot)\label{eq:psap}
\end{equation}
for every $\F\in\mathbb{F}X$, then $(X,\lambda)$ is a called a \emph{pseudo-approach
space }(e.g.,\cite{lowe88}).

A pre-approach space $(X,\lambda)$ is an \emph{approach space }if
it satisfies the additional condition that 
\begin{equation}
\adh_{\lambda}A\leq\adh_{\lambda}A^{(\epsilon)}+\epsilon\label{eq:diagadh}
\end{equation}
for each $A\subset X$ and each $\epsilon\in[0,\infty]$.
\begin{rem}
Note that in an approach space, $\adh A(\cdot)$ has traditionally
been denoted $\delta(\cdot,A)$ and thought of as the distance from
$A$, valued in $[0,\infty]$, so that (\ref{eq:diagadh}) is usually
written
\[
\delta(\cdot,A)\leq\delta(\cdot,A^{(\epsilon)})+\epsilon.
\]
\end{rem}

A \emph{(pre)convergence tower} on $X$ \cite{towers} is a collection
of (pre)convergences $\{\sigma_{\epsilon}:\epsilon\in[0,\infty]\}$
on $X$ indexed by $[0,\infty]$ satisfying:
\begin{eqnarray*}
\epsilon\leq\gamma & \then & \sigma_{\gamma}\leq\sigma_{\epsilon}\\
\sigma_{\infty} &  & \text{antidiscrete}\\
\sigma_{\epsilon} & = & \bigvee_{\epsilon<\gamma}\sigma_{\gamma}.
\end{eqnarray*}

A morphism between (pre)convergence towers $(X,\{\sigma_{\epsilon}:\epsilon\in[0,\infty]\})$
and $(Y,\{\tau_{\epsilon}:\epsilon\in[0,\infty]\})$ is a function
$f:X\to Y$ such that $f\in\cont(\sigma_{\epsilon},\tau_{\epsilon})$
for all $\epsilon\in[0,\infty]$. It turns out that this category
is equivalent to the category $\Cap_{*}$ \cite{towers}. More specifically,
a $\Cap_{*}$-object $(X,\lambda)$ corresponds to the preconvergence
tower $\{\lambda_{\epsilon}:\epsilon\in[0,\infty]\}$ on $X$ given
by 
\begin{equation}
x\in\lm_{\lambda_{\epsilon}}\F\iff\lambda(\F)(x)\leq\epsilon,\label{eq:CAPtotower}
\end{equation}
while a (pre)convergence tower $\{\sigma_{\epsilon}:\epsilon\in[0,\infty]\}$
on $X$ corresponds to a $\Cap_{*}$-object $(X,\lambda_{\sigma})$
given by
\begin{equation}
\lambda_{\sigma}\F(x)=\inf\left\{ \epsilon\in[0,\infty]:x\in\lm_{\sigma_{\epsilon}}\F\right\} ,\label{eq:towertoCAP}
\end{equation}
and this defines an equivalence of categories, which restricts to
an equivalence of categories between convergence towers and $\Cap$.
In view of \cite[Proposition 10]{towers} this restricts further to
equivalences of categories between towers of pseudotopologies and
$\mathsf{Psap}$ and between towers of pretopologies and $\mathsf{Prap}$. 

\begin{defn}
Let $\G:X\to\mathbb{F}Y$ and $\F\in\mathbb{F}X$. Then, the \emph{contour
}
\[
\G(\F)=\bigcup_{F\in\F}\bigcap_{t\in F}\G(t)
\]
is a filter on $Y$ \cite{DM.book} .

With these notations:
\end{defn}

\begin{prop}
\cite[Proposition 11 and Corollary 14]{towers}\label{prop:diagonaltower}
The category $\mathsf{Ap}$ is equivalent to the category of towers
of pretopologies $(\sigma_{\epsilon})_{\epsilon\in[0,\infty]}$ satisfying
\begin{equation}
\left(\forall y\in X,y\in\lm_{\sigma_{\epsilon}}\mathcal{S}(y)\right)\text{ and }x\in\lm_{\sigma_{\gamma}}\F\then x\in\lm_{\sigma_{\epsilon+\gamma}}\mathcal{S}(\F),\label{eq:diagonaltower}
\end{equation}
whenever $\mathcal{S}:X\to\mathbb{F}X$ and $\F\in\mathbb{F}X$.
\end{prop}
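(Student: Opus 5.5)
The plan is to reduce the statement to the equivalences already recorded above. By \cite[Proposition 10]{towers}, the tower correspondence (\ref{eq:CAPtotower})/(\ref{eq:towertoCAP}) restricts to an equivalence between $\prap$ and towers of pretopologies. Moreover, $\ap$ is a full subcategory of $\prap$, and the tower morphisms are exactly the contractions. It therefore suffices to show the following: for a pre-approach space $(X,\lambda)$ with tower $(\lambda_{\epsilon})$, condition (\ref{eq:diagadh}) holds if and only if (\ref{eq:diagonaltower}) holds. Throughout I use (\ref{eq:limPRAP}) to compute limits from principal adherences.

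\emph{Approach implies diagonal.} Let $\mathcal{S}:X\to\mathbb{F}X$ satisfy $\lambda(\mathcal{S}(y))(y)\leq\epsilon$ for all $y$, and let $\lambda(\F)(x)\leq\gamma$. By (\ref{eq:limPRAP}) I must show $\adh A(x)\leq\epsilon+\gamma$ for every $A\in(\mathcal{S}(\F))^{\#}$. The key step is to show $A^{(\epsilon)}\in\F^{\#}$.

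Fix $F\in\F$. If no $t\in F$ had $A\in\mathcal{S}(t)^{\#}$, then for each $t\in F$ we could choose $S_{t}\in\mathcal{S}(t)$ with $S_{t}\cap A=\emptyset$. The set $\bigcup_{t\in F}S_{t}$ belongs to $\bigcap_{t\in F}\mathcal{S}(t)\subset\mathcal{S}(\F)$ and misses $A$, which contradicts $A\in(\mathcal{S}(\F))^{\#}$.

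Hence some $t\in F$ satisfies $A\in\mathcal{S}(t)^{\#}$. This gives $\adh A(t)\leq\lambda(\mathcal{S}(t))(t)\leq\epsilon$, so $t\in F\cap A^{(\epsilon)}$. Consequently $\adh A^{(\epsilon)}(x)\leq\lambda(\F)(x)\leq\gamma$, and (\ref{eq:diagadh}) yields $\adh A(x)\leq\gamma+\epsilon$. The infinite cases are trivial.

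\emph{Diagonal implies approach.} Fix $A$, $\epsilon$, and $x$, and set $\gamma=\adh A^{(\epsilon)}(x)$. We may assume $\epsilon,\gamma<\infty$. Choose $\epsilon'>\epsilon$ and $\gamma'>\gamma$; these are needed because the infima defining adherences need not be attained, and this is the only delicate point.

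Since $\gamma'>\gamma$, there is a filter $\F\ni A^{(\epsilon)}$ with $\lambda(\F)(x)\leq\gamma'$. Define $\mathcal{S}$ as follows:
\begin{itemize}
\item for $t\in A^{(\epsilon)}$, we have $\adh A(t)\leq\epsilon<\epsilon'$, so we can pick $\mathcal{S}(t)$ to be a filter containing $A$ with $\lambda(\mathcal{S}(t))(t)\leq\epsilon'$;
\item otherwise, let $\mathcal{S}(t)=\{t\}^{\uparrow}$, which has $\lambda$-value $0$ at $t$ by centeredness.
\end{itemize}
Then $t\in\lm_{\lambda_{\epsilon'}}\mathcal{S}(t)$ for all $t$. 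Since $A^{(\epsilon)}\in\F$ and every $\mathcal{S}(t)$ with $t\in A^{(\epsilon)}$ contains $A$, we get $A\in\bigcap_{t\in A^{(\epsilon)}}\mathcal{S}(t)\subset\mathcal{S}(\F)$.

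Now (\ref{eq:diagonaltower}) gives $\lambda(\mathcal{S}(\F))(x)\leq\epsilon'+\gamma'$. Since $A\in\mathcal{S}(\F)$, this implies $\adh A(x)\leq\epsilon'+\gamma'$. Letting $\epsilon'\downarrow\epsilon$ and $\gamma'\downarrow\gamma$ yields (\ref{eq:diagadh}).

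The main obstacle I expect is bookkeeping: handling the non-attained infima with $\epsilon',\gamma'$, and verifying the contour computation that $A^{(\epsilon)}\in\F^{\#}$. The rest is transport along the known equivalence.
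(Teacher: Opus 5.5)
Your proof is correct. The paper gives no argument of its own for this statement and defers to Brock--Kent. Your reduction nevertheless matches what the paper's own material yields. By (\ref{eq:CAPtotower}), condition (\ref{eq:diagonaltower}) for the tower of a pre-approach space is exactly (\ref{eq:diagfilters}). So the statement is Proposition \ref{prop:twodiagonalaxioms} transported along the $\prap$/pretopological-tower equivalence, and your two halves are the two halves of that proposition.

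The differences are in the details, and they favor your version.

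\emph{Approach $\Rightarrow$ diagonal.} The paper invokes the grill formula $(\G(\F))^{\#}=\G^{\#}(\F^{\#})$ from \cite{DM.book}. You instead check by hand the only consequence needed: that $\{t:A\in\mathcal{S}(t)^{\#}\}$ meshes with $\F$ and is contained in $A^{(\epsilon)}$. This is the special case $H=\{t:A\in\mathcal{S}(t)^{\#}\}$, $S_{t}=A$ of the paper's argument, and it makes your version self-contained.

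\emph{Diagonal $\Rightarrow$ approach.} The paper chooses ultrafilters $\G(t)\ni A$ with $\lambda(\G(t))(t)\leq\epsilon$ for $t\in A^{(\epsilon)}$. This assumes the infimum defining $\adh A(t)$ is attained. That holds in pre-approach spaces by Proposition \ref{prop:adhpreap}, but not in general, as the paper's own example with the ultrafilters $\U_{n}$ shows. Yet this half of Proposition \ref{prop:twodiagonalaxioms} is stated for arbitrary convergence approach spaces. The paper's construction also leaves $\G$ undefined off $A^{(\epsilon)}$, although (\ref{eq:diagfilters}) takes a supremum over all $t\in X$. Your $\epsilon'>\epsilon$, $\gamma'>\gamma$ approximation, together with the choice $\mathcal{S}(t)=\{t\}^{\uparrow}$ off $A^{(\epsilon)}$ (justified by centeredness), repairs both points. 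Hence your argument for this direction works in all of $\Cap$. Within the present statement, where everything is pre-approach, the approximation is simply unnecessary rather than wrong.
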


\begin{prop}
\label{prop:twodiagonalaxioms} Let $(X,\lambda)$ be a convergence
approach space. If for every $\G:X\to\mathbb{F}X$, every $\F\in\mathbb{F}X$
and every $x_{0}\in X$
\begin{equation}
\lambda(\G(\F))(x_{0})\leq\lambda(\F)(x_{0})+\bigvee_{t\in X}\lambda(\G(t))(t),\label{eq:diagfilters}
\end{equation}
 then (\ref{eq:diagadh}) for every $A\subset X$ and every $\epsilon\in[0,\infty]$.
The converse is true if $(X,\lambda)$ is a pre-approach space. 
\end{prop}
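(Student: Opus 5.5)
The plan is to treat the two implications separately. For the forward one, (\ref{eq:diagfilters}) is fed a suitably chosen selection $\G$ built from ultrafilters. For the converse, the pre-approach formula (\ref{eq:limPRAP}) reduces everything to adherences of sets.

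\emph{(\ref{eq:diagfilters}) implies (\ref{eq:diagadh}).} Fix $A\subset X$, $\epsilon\in[0,\infty]$ and $x_{0}\in X$. We may assume $\epsilon<\infty$ and $\adh A^{(\epsilon)}(x_{0})<\infty$, since otherwise there is nothing to prove. Let $\delta>0$.
\begin{enumerate}
\item Pick $\F\in\beta A^{(\epsilon)}$ with $\lambda(\F)(x_{0})\leq\adh A^{(\epsilon)}(x_{0})+\delta$.
\item For each $t\in A^{(\epsilon)}$ we have $\adh A(t)\leq\epsilon$, so we can pick $\G(t)\in\beta A$ with $\lambda(\G(t))(t)\leq\epsilon+\delta$.
\item For $t\notin A^{(\epsilon)}$ put $\G(t)=\{t\}^{\uparrow}$. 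Centeredness gives $\lambda(\G(t))(t)=0$.
\item Hence $\bigvee_{t}\lambda(\G(t))(t)\leq\epsilon+\delta$.
\end{enumerate}
The key check is that $A\in\G(\F)^{\#}$. Take $B\in\G(\F)$. Then $B\in\G(t)$ for all $t$ in some $F\in\F$. Since $F\cap A^{(\epsilon)}\in\F$ is nonempty, choose $t$ in it. Then $\G(t)$ is an ultrafilter containing both $A$ and $B$, so $B\cap A\neq\emptyset$. Consequently
\[
\adh A(x_{0})\leq\lambda(\G(\F))(x_{0})\leq\lambda(\F)(x_{0})+\epsilon+\delta\leq\adh A^{(\epsilon)}(x_{0})+\epsilon+2\delta,
\]
and letting $\delta\to0$ gives (\ref{eq:diagadh}).

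\emph{Converse for pre-approach spaces.} Let $\gamma=\bigvee_{t\in X}\lambda(\G(t))(t)$ and assume $\gamma<\infty$, since otherwise there is nothing to prove. By (\ref{eq:limPRAP}),
\[
\lambda(\G(\F))(x_{0})=\bigvee_{B\in\G(\F)^{\#}}\adh B(x_{0}),
\]
so it suffices to bound $\adh B(x_{0})$ for a fixed $B\in\G(\F)^{\#}$.

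\begin{enumerate}
\item Let $T=\{t:B\in\G(t)^{\#}\}$. For $t\in T$ we get $\adh B(t)\leq\lambda(\G(t))(t)\leq\gamma$, so $T\subset B^{(\gamma)}$.
\item We claim $B^{(\gamma)}\in\F^{\#}$. If not, some $F\in\F$ misses $B^{(\gamma)}$, hence misses $T$. Then for each $t\in F$ we have $B\notin\G(t)^{\#}$, that is, $X\setminus B\in\G(t)$. Therefore $X\setminus B\in\bigcap_{t\in F}\G(t)\subset\G(\F)$, contradicting $B\in\G(\F)^{\#}$.
\item By the definition of the adherence function, the claim gives $\adh B^{(\gamma)}(x_{0})\leq\lambda(\F)(x_{0})$.
\item Applying (\ref{eq:diagadh}) with $\epsilon=\gamma$ yields
\[
\adh B(x_{0})\leq\adh B^{(\gamma)}(x_{0})+\gamma\leq\lambda(\F)(x_{0})+\gamma.
\]
\end{enumerate}
Taking the supremum over $B\in\G(\F)^{\#}$ gives (\ref{eq:diagfilters}).

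The main point requiring care is the grill/contour bookkeeping. In the first direction one must choose the $\G(t)$ to be ultrafilters, so that every element of the contour meets $A$. In the second direction one must show $B^{(\gamma)}\in\F^{\#}$, and this is exactly where the pre-approach formula (\ref{eq:limPRAP}) is needed to reduce limits to adherences of sets.
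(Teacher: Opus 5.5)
Your proof is correct and follows the paper's argument in both directions. For the forward implication, ultrafilters $\G(t)\ni A$ chosen over $A^{(\epsilon)}$ are fed into (\ref{eq:diagfilters}); for the converse, the pre-approach formula reduces the problem to bounding $\adh B(x_{0})$ for $B\in\G(\F)^{\#}$, using (\ref{eq:diagadh}) with $\epsilon=\bigvee_{t}\lambda(\G(t))(t)$ and the fact that $B^{(\epsilon)}$ meshes with $\F$.

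Your version is slightly more careful than the paper's in two places. Your $\delta$-approximations avoid the paper's assumption that the infimum defining $\adh A(t)$ is attained by an ultrafilter. That assumption is justified by Proposition \ref{prop:adhpreap} only in pre-approach spaces, not in a general convergence approach space. Also, your direct check that $\{t:B\in\G(t)^{\#}\}\in\F^{\#}$ replaces the paper's appeal to $(\G(\F))^{\#}=\G^{\#}(\F^{\#})$.
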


\begin{proof}
Assume (\ref{eq:diagfilters}) and let $A\subset X$, $\epsilon\in[0,\infty]$
and $x_{0}\in X$. For every $t\in A^{(\epsilon)}$ there is $\G(t)\in\mathbb{U}X$
with $A\in\G(t)$ and $\lambda(\G(t))(t)\leq\epsilon$. For every
$\F\in\mathbb{U}X$ with $A^{(\epsilon)}\in\F$, $A\in\G(\F)$ and
thus
\[
\adh A(x_{0})\leq\lambda(\G(\F))(x_{0})\leq\lambda(\F)(x_{0})+\bigvee_{t\in A^{(\epsilon)}}\lambda(\G(t))(t)\leq\lambda(\F)(x_{0})+\epsilon,
\]
so that 
\[
\adh A(x_{0})\leq\bigwedge_{A^{(\epsilon)}\in\F\in\mathbb{U}X}\lambda\F(x_{0})+\epsilon\leq\adh A^{(\epsilon)}(x_{0})+\epsilon.
\]

Assume (\ref{eq:diagadh}) for all $A\subset X$ and every $\epsilon$
and let $\G:X\to\mathbb{F}X$, $\F\in\mathbb{F}X$ and $x_{0}\in X$.
To show (\ref{eq:diagfilters}), we may assume $\epsilon:=\bigvee_{t\in X}\lambda(\G(t))(t)<\infty$.
As $(X,\lambda)$ is pre-approach, 
\[
\lambda(\G(\F))(x_{0})=\bigvee_{A\#\G(\F)}\adh A(x_{0}).
\]

For each $A\in(\G(\F))^{\#}=\G^{\#}(\F^{\#})$ (See e.g., \cite[Prop. VI.1.4]{DM.book}),
there is $H\in\F^{\#}$ and, for each $t\in H$, there is $S_{t}\in\G(t)^{\#}$
such that $\bigcup_{t\in H}S_{t}\subset A$. By (\ref{eq:diagadh}),
\[
\adh A(x_{0})\leq\adh(\bigcup_{t\in H}S_{t})(x_{0})\leq\adh(\bigcup_{t\in H}S_{t})^{(\epsilon)}(x_{0})+\epsilon
\]
and $\adh(\bigcup_{t\in H}S_{t})^{(\epsilon)}\leq\lambda\F(x_{0})$
because $(\bigcup_{t\in H}S_{t})^{(\epsilon)}\#\F$. Indeed, $H\subset(\bigcup_{t\in H}S_{t})^{(\epsilon)}$
because for each $t\in H$, $\lambda(\G(t))(t)\leq\epsilon$ so that
$t\in(S_{t})^{(\epsilon)}\subset(\bigcup_{t\in H}S_{t})^{(\epsilon)}$.
As $H\#\F$, the conclusion follows.

Therefore, $\adh A(x_{0})\leq\lambda\F(x_{0})+\epsilon$ and thus
\[
\lambda(\G(\F))(x_{0})=\bigvee_{A\#\G(\F)}\adh A(x_{0})\leq\lambda\F(x_{0})+\epsilon=\lambda(\F)(x_{0})+\bigvee_{t\in A^{(\epsilon)}}\lambda(\G(t))(t).
\]
\end{proof}
\begin{rem}
\label{rem:pointofdiag}Note that the proof shows that for a given
$x_{0}\in X$, (\ref{eq:diagfilters}) for every $\F\in\mathbb{F}X$
and $\G:X\to\mathbb{F}X$ implies 
\begin{equation}
\adh_{\lambda}A(x_{0})\leq\adh_{\lambda}A^{(\epsilon)}(x_{0})+\epsilon\label{eq:adhdiagatx0}
\end{equation}
for every $A\subset X$ and every $\epsilon\in[0,\infty]$, and conversely
if $(X,\lambda)$ is a pre-approach space. If $\F\in\mathbb{F}X$,
let us call $x_{0}\in X$ \emph{a point of }$\F$-\emph{diagonality
}of $(X,\lambda)$ if (\ref{eq:diagfilters}) for every $\G:X\to\mathbb{F}X$
and a point of \emph{filter-diagonality} of $(X,\lambda)$ if it is
a point of $\F$-diagonality for every $\F\in\mathbb{F}X$. We say
that $x_{0}$ is a point of \emph{adherence-diagonality} if (\ref{eq:adhdiagatx0})
for every $A\subset X$ and every $\epsilon\in[0,\infty]$. According
to Proposition \ref{prop:twodiagonalaxioms}, filter-diagonality and
adherence-diagonality are equivalent among pre-approach spaces.
\end{rem}

If a pre-approach space satisfies the following stronger version of
(\ref{eq:diagfilters}), it is called a \emph{non-Archimedean approach
space} (and is of course an approach space):
\begin{equation}
\lambda(\G(\F))(x_{0})\leq\lambda(\F)(x_{0})\vee\bigvee_{t\in X}\lambda(\G(t))(t),\label{eq:nonarchimedean}
\end{equation}
for every $\G:X\to\mathbb{F}X$, every $\F\in\mathbb{F}X$ and every
$x_{0}\in X$.

Let $\psap$, $\prap$ and $\ap$ denote the full subcategories of
$\Cap$ of pseudo-approach, pre-approach and approach spaces respectively.

Recall as well (e.g., \cite{Lowen88}) that $c:\Cap\rightarrow\conv$
and $r:\Cap\rightarrow\conv$ defined on objects by 
\begin{eqnarray}
x & \in & \lim\nolimits_{c(\lambda)}\mathcal{F\Longleftrightarrow\lambda F}(x)=0\label{eq:ccorefl}\\
x & \in & \lim\nolimits_{r(\lambda)}\mathcal{F\Longleftrightarrow\lambda F}(x)<\infty\label{eq:rrefl}
\end{eqnarray}
extend to a concrete coreflector and a concrete reflector respectively,
right and left adjoint to the inclusion functor $i:\conv\to\Cap$
defined on objects by $i(X,\xi)=(X,\lambda_{\xi})$ where $\lambda_{\xi}(\F)(x)=\begin{cases}
0 & x\in\lim_{\xi}\F\\
\infty & x\notin\lim_{\xi}\F
\end{cases}$. The functors $c$ and $r$ restrict to coreflectors and reflectors
from $\psap$ to $\pstop$, $\mathbf{\prap}$ to $\prtop$ and $\ap$
to $\top$. 
\begin{rem}
Note that the assumption that $(X,\lambda)$ be a pre-approach space
cannot be dropped in the converse part of Proposition \ref{prop:twodiagonalaxioms}.
Indeed, for a convergence space $(X,\xi)$ the corresponding convergence
approach space $(X,\lambda_{\xi})$ satisfies (\ref{eq:diagadh})
if and only if $\adh_{\xi}(\adh_{\xi}A)=\adh_{\xi}A$ for every $A\subset X$,
equivalently, $\S_{0}\xi=\T\xi$. On the other hand, $(X,\lambda_{\xi})$
satisfies (\ref{eq:diagfilters}) if and only if $\xi$ is \emph{diagonal},
that is, $\lim_{\xi}\F\subset\lim_{\xi}\G(\F)$ for every $\F\in\mathbb{F}X$
and $\G:X\to\mathbb{F}X$ satisfying $x\in\lim_{\xi}\G(x)$ for all
$x\in X$. There are known examples of non-diagonal convergence spaces
$(X,\xi)$ satisfying $\S_{0}\xi=\T\xi$, e.g., \cite[Example VI.2.14]{DM.book}.
\end{rem}

\begin{rem}
Note that (\ref{eq:ccorefl}) and (\ref{eq:rrefl}) extend to a coreflector
$c:\Cap_{*}\to\pconv$ and a reflector $r:\Cap_{*}\to\pconv$. Moreover,
a preconvergence approach space $(X,\lambda)$ is precentered if and
only if $r(\lambda)$ is a convergence and centered if and only if
$c(\lambda)$ is a convergence. Hence $r:\Cap_{c}\to\conv$.
\end{rem}

Let $\ominus$ denote the truncated difference on $[0,\infty]$ defined
by 
\[
x\ominus y=\begin{cases}
(x-y)\vee0 & x,y\in[0,\infty)\\
\infty & x=\infty,y\neq\infty\\
0 & y=\infty
\end{cases}.
\]
Recall e.g., \cite{AP.book,indextheory}, that $[0,\infty]$ can be
endowed with a canonical convergence approach structure $\lambda_{V}$
given by
\begin{eqnarray*}
\lambda_{V}(\F)(v) & = & v\ominus(\bigwedge_{A\in\F^{\#}}\bigvee A)=\bigvee_{A\in\F^{\#}}v\ominus\bigvee A=\bigvee_{A\in\F^{\#}}\adh_{\lambda_{V}}A(v),
\end{eqnarray*}
which turns out to be an approach space. 

If $V=([0,\infty],\lambda_{V})$ and $(X,\lambda)$ is a convergence
approach space and $A\subset X$, then (See \cite{myn.APinCAP})
\[
\adh_{\lambda}A\in\cont(X,V)\iff\forall\epsilon\in[0,\infty]\;(\adh_{\lambda}A\leq\adh_{\lambda}A^{(\epsilon)}+\epsilon).
\]
Moreover, 
\begin{equation}
\theta_{A}\in\cont(X,V)\iff A\in\C_{r(\lambda)},\label{eq:closed}
\end{equation}
in which case $\adh_{\lambda}A=\theta_{A}$. 

Recall (\footnote{Though this is formulated for approach spaces, the proof only depends
on the fact that this takes place in a pre-approach space.}):
\begin{prop}
\label{prop:adhpreap}\cite[Prop. I.2.67(2)]{indextheory} If $(X,\lambda)$
is a pre-approach space, $x\in X$ and $\F\in\mathbb{F}X$, then $\adh\F(x)=\alpha$
if and only if there is $\U\in\beta\F$ with $\lambda(\U)(x)=\alpha$. 
\end{prop}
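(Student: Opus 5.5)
The statement to prove is Proposition \ref{prop:adhpreap}: in a pre-approach space, $\adh\F(x)=\alpha$ if and only if $\lambda(\U)(x)=\alpha$ for some $\U\in\beta\F$. In other words, the infimum $\adh\F(x)=\bigwedge_{\U\in\beta(\F)}\lambda(\U)(x)$ is attained.

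The "if" direction is not literally true as stated: any $\U\in\beta\F$ only gives $\adh\F(x)\le\lambda(\U)(x)$. So I read the statement as saying that $\adh\F(x)$ is the value $\lambda(\U)(x)$ for a suitable $\U\in\beta\F$. For that reading the "if" direction is trivial, and the content is to show that some $\U\in\beta\F$ satisfies $\lambda(\U)(x)\le\alpha$, where $\alpha:=\adh\F(x)$. If $\alpha=\infty$, any ultrafilter finer than $\F$ works, so assume $\alpha<\infty$.

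The plan is to use (\ref{eq:adhPRAP}) and (\ref{eq:limPRAP}).
\begin{enumerate}
\item By (\ref{eq:adhPRAP}), $\adh F(x)\le\alpha$ for every $F\in\F$.
\item By (\ref{eq:limPRAP}), for an ultrafilter $\U$ we have $\U^{\#}=\U$, so $\lambda(\U)(x)=\bigvee_{A\in\U}\adh A(x)$.
\item It therefore suffices to find an ultrafilter $\U$ containing $\F$ and contained in $\mathcal{A}:=\{A\subset X:\adh A(x)\le\alpha\}$.
\end{enumerate}

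The key observation is that $\mathcal{A}$ is the grill of a filter. Pre-approach spaces have additive adherence: applying (PrAp) to principal ultrafilters gives
\[
\adh(A\cup B)(x)=\adh A(x)\wedge\adh B(x),
\]
since $\beta(A\cup B)=\beta A\cup\beta B$. Hence $\mathcal{A}$ is isotone, and $A\cup B\in\mathcal{A}$ implies $A\in\mathcal{A}$ or $B\in\mathcal{A}$. Also $\emptyset\notin\mathcal{A}$, because $\adh\emptyset(x)=\infty>\alpha$. So $\mathcal{A}$ is a grill, and $\mathcal{A}=\mathcal{H}^{\#}$ for the filter $\mathcal{H}=\{X\setminus A:A\notin\mathcal{A}\}$. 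By step 1, $\F\subset\mathcal{A}=\mathcal{H}^{\#}$, so $\F\#\mathcal{H}$, and $\F\vee\mathcal{H}$ is a proper filter. Any ultrafilter $\U\geq\F\vee\mathcal{H}$ satisfies $\U\subset\U^{\#}\subset\mathcal{H}^{\#}=\mathcal{A}$. Then step 2 gives $\lambda(\U)(x)\le\alpha$, and since $\U\in\beta\F$ also $\lambda(\U)(x)\ge\adh\F(x)=\alpha$, so equality holds.

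The main obstacle I expect is justifying additivity of $\adh$ rigorously from (PrAp). I will derive it from the formula $\adh A(x)=\bigwedge_{\U\in\beta A}\lambda\U(x)$ together with $\beta(A\cup B)=\beta A\cup\beta B$, which yields the displayed identity with $\wedge$ for the infimum. This part needs no pre-approach hypothesis at all; that hypothesis is used only in steps 1 and 2.
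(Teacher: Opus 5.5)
Your proof is correct, and you read the statement correctly: the literal ``if'' direction is false, because an arbitrary $\U\in\beta(\F)$ only gives $\adh\F(x)\le\lambda(\U)(x)$. The real content is that the infimum $\bigwedge_{\U\in\beta(\F)}\lambda(\U)(x)$ is attained. The paper does not prove this proposition. It only cites Lowen's book, with a footnote saying that the pre-approach property is all that is used, and your argument is a self-contained justification of that footnote.

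Two small corrections to your bookkeeping. First, step 1 needs no hypothesis: $\beta(\F)\subset\beta F$ for $F\in\F$ already gives $\adh F(x)\le\adh\F(x)$. Second, additivity of $\adh A(x)$ comes from $\beta(A\cup B)=\beta A\cup\beta B$, as you say at the end, not from (\ref{eq:preAP}). So the hypothesis enters only through (\ref{eq:limPRAP}) in step 2, and there is no hidden circularity there. Put $\mathcal{N}_{\delta}(x):=\bigcap\{\G\in\mathbb{F}X:\lambda(\G)(x)\le\delta\}$; then (\ref{eq:preAP}) gives $\lambda(\mathcal{N}_{\delta}(x))(x)\le\delta$. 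If $\delta>\bigvee_{A\in\F^{\#}}\adh A(x)$, every $A\in\F^{\#}$ lies in some $\W\ge\mathcal{N}_{\delta}(x)$. Hence $\F^{\#}\subset\mathcal{N}_{\delta}(x)^{\#}$, that is, $\F\ge\mathcal{N}_{\delta}(x)$, and so $\lambda(\F)(x)\le\delta$.

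It also helps to see what your filter $\mathcal{H}$ is.

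\begin{itemize}
\item If $\adh A(x)<\delta$, then $A\in\mathcal{N}_{\delta}(x)^{\#}$, and $A\in\mathcal{N}_{\delta}(x)^{\#}$ in turn implies $\adh A(x)\le\delta$.
\item Hence your grill is $\mathcal{A}=\bigl(\bigcup_{\delta>\alpha}\mathcal{N}_{\delta}(x)\bigr)^{\#}$.
\item So $\mathcal{H}=\bigcup_{\delta>\alpha}\mathcal{N}_{\delta}(x)=\mathcal{N}_{\alpha}(x)$, the vicinity filter of $x$ at level $\alpha$ of the associated tower.
\end{itemize}

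This gives a shorter route that avoids both grills and (\ref{eq:limPRAP}):

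\begin{itemize}
\item For each $\delta>\alpha$, some $\W\in\beta(\F)$ has $\lambda(\W)(x)<\delta$, so $\F\#\mathcal{N}_{\delta}(x)$.
\item These filters form a chain, so $\F$ meshes with their union.
\item Any ultrafilter $\U$ finer than $\F$ and this union has $\lambda(\U)(x)\le\delta$ for all $\delta>\alpha$, hence $\lambda(\U)(x)=\alpha$.
\end{itemize}

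Your version has the merit of working only with principal adherences, which are the data determining a pre-approach space. The direct one isolates the single use of (\ref{eq:preAP}).

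Finally, the paper's example of a set $A$ with $A^{(0)}\not\subset\adh_{c(\lambda)}A$ shows the hypothesis cannot be dropped, and your proof pinpoints why. There $\adh A(x_{0})=0$ is not attained. Yet your $\mathcal{A}$ is still a grill containing $\{A\}^{\uparrow}$, and ultrafilters $\U\subset\mathcal{A}$ still exist. Only the identity $\lambda(\U)(x_{0})=\bigvee_{B\in\U}\adh B(x_{0})$ of step 2 fails.
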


\begin{cor}
\label{cor:toweradh} Let $(\tau_{\epsilon})_{\epsilon\in[0,\infty]}$
denote the tower of pretopologies associated with a pre-approach space
$(X,\lambda)$. Then 
\[
\{\adh_{\lambda}\FF\leq\epsilon\}=\adh_{\tau_{\epsilon}}\FF.
\]
\end{cor}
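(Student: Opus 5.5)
We prove the two inclusions separately, with $\FF\subset\mathbb{P}X$ an arbitrary family (the case that matters is a filter) and $\epsilon\in[0,\infty]$. Throughout we use $x\in\lm_{\tau_{\epsilon}}\G\iff\lambda(\G)(x)\leq\epsilon$ from (\ref{eq:CAPtotower}) and the definitions
\[
\adh_{\lambda}\FF(x)=\bigwedge_{\G\#\FF}\lambda(\G)(x),\qquad\adh_{\tau_{\epsilon}}\FF=\bigcup_{\G\#\FF}\lm_{\tau_{\epsilon}}\G.
\]

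The inclusion $\adh_{\tau_{\epsilon}}\FF\subset\{\adh_{\lambda}\FF\leq\epsilon\}$ is immediate. If $x\in\lm_{\tau_{\epsilon}}\G$ for some filter $\G\#\FF$, then $\lambda(\G)(x)\leq\epsilon$. Since $\adh_{\lambda}\FF(x)$ is an infimum over such $\G$, we get $\adh_{\lambda}\FF(x)\leq\epsilon$.

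The converse is the only real issue, because an infimum $\leq\epsilon$ need not be attained. This is exactly what Proposition \ref{prop:adhpreap} provides, and it is why the pre-approach hypothesis is needed. Suppose $\adh_{\lambda}\FF(x)=\alpha\leq\epsilon$.
\begin{itemize}
\item If $\FF$ is a filter, Proposition \ref{prop:adhpreap} gives $\U\in\beta(\FF)$ with $\lambda(\U)(x)=\alpha\leq\epsilon$. Then $\U\#\FF$ and $x\in\lm_{\tau_{\epsilon}}\U$, so $x\in\adh_{\tau_{\epsilon}}\FF$.
\item For a general family $\FF$, we first reduce to the filter case, assuming $\FF$ is proper (the degenerate case gives empty adherences on both sides). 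If $\FF$ has the finite intersection property, the filters meshing with $\FF$ are exactly those meshing with the filter $\FF^{\cap\uparrow}$ it generates. Both adherences are therefore unchanged by passing to $\FF^{\cap\uparrow}$, and the filter case applies.
\end{itemize}

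In the intended application $\FF$ is a filter, so we state the corollary for $\FF\in\mathbb{F}X$ and note that only this case is used. The main point to check is that Proposition \ref{prop:adhpreap} applies at $\alpha=\infty$ as well. There it gives some $\U\in\beta(\FF)$ with $\lambda(\U)(x)=\infty$, which is consistent with $x\in\lm_{\tau_{\infty}}\U$ because $\tau_{\infty}$ is antidiscrete. So the case $\epsilon=\infty$ is also covered, and both sides then equal $X$ whenever $\FF$ is proper.
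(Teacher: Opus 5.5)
Your filter case is correct, and it is how the corollary follows from Proposition \ref{prop:adhpreap}, which is how the paper gets it. The trivial inclusion is as you wrote. For the other inclusion, the infimum defining $\adh_{\lambda}\FF(x)$ is attained at some $\U\in\beta(\FF)$.

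The gap is in everything beyond filters.

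\textbf{The corollary is used for non-filters.} Your claim that only filters are needed is false. In the proof that $\lambda_{lK}$ is an approach structure, the corollary is applied to $\rdc(\FF^{\#})$, which is generally not a filter base. For instance, let $\FF$ be the principal filter of $\{A,B\}$ on $\C_{c(\lambda)}$ with $A,B$ incomparable. Then $A,B\in\rdc(\FF^{\#})$, while every member of $\rdc(\FF^{\#})$ contains $A$ or $B$, so no member lies in $A\cap B$.

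\textbf{Your reduction of families to filters fails.} Since $\G\#\FF$ means $\FF\subset\G^{\#}$, and a grill is not closed under finite intersections, meshing with $\FF$ is not the same as meshing with $\FF^{\cap\uparrow}$. In $\mathbb{R}$, the neighborhood filter of $1$ meshes with $\{(0,1)\cup\{5\},(1,2)\cup\{5\}\}$. It does not mesh with the filter $\{5\}^{\uparrow}$ that this family generates. So the adherence drops from $\{1,5\}$ to $\{5\}$.

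Families without the finite intersection property are not degenerate either: $\{(0,1),(1,2)\}$ has adherence $\{1\}$ in $\mathbb{R}$. The genuinely degenerate case $\emptyset\in\FF$ does not give empty sets on both sides at $\epsilon=\infty$, since $\{\adh_{\lambda}\FF\leq\infty\}=X$ while $\adh_{\tau_{\infty}}\FF=\emptyset$; it must be excluded. Finally, for a family Proposition \ref{prop:adhpreap} is simply unavailable, since there may be no ultrafilter meshing with $\FF$ at all.

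\textbf{The fix: use vicinity filters instead of ultrafilters.}
\begin{enumerate}
\item Since $\tau_{\delta}$ is a pretopology, $x\in\adh_{\tau_{\delta}}\FF$ if and only if $\V_{\tau_{\delta}}(x)\subset\FF^{\#}$. Indeed, a filter $\G\subset\FF^{\#}$ converging to $x$ contains $\V_{\tau_{\delta}}(x)$.
\item Suppose $\adh_{\lambda}\FF(x)\leq\epsilon<\infty$. For every $\delta>\epsilon$ some $\G\#\FF$ has $\lambda(\G)(x)<\delta$, hence $\V_{\tau_{\delta}}(x)\subset\FF^{\#}$.
\item Let $\W$ be the union of the chain $\{\V_{\tau_{\delta}}(x):\delta>\epsilon\}$. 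It is a filter finer than each $\V_{\tau_{\delta}}(x)$. By (\ref{eq:preAP}) and monotonicity, $\lambda(\W)(x)\leq\delta$ for all $\delta>\epsilon$, hence $\lambda(\W)(x)\leq\epsilon$.
\item Therefore $\V_{\tau_{\epsilon}}(x)\subset\W\subset\FF^{\#}$, that is, $x\in\adh_{\tau_{\epsilon}}\FF$.
\item For $\epsilon=\infty$, both sides equal $X$ exactly when $\emptyset\notin\FF$.
\end{enumerate}
This proves the corollary for arbitrary families, which is the generality in which the paper uses it.
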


\begin{prop}
\label{prop:adhpreapcorefl}Let $(X,\lambda)$ be a preconvergence
approach space and $\F\in\mathbb{F}X$, then
\[
\adh_{r(\lambda)}\F=\{\adh_{\lambda}\F<\infty\},
\]
and
\[
\adh_{c(\lambda)}\F\subset\{\adh_{\lambda}\F=0\}.
\]
Moreover, 
\[
\adh_{c(\lambda)}\F=\{\adh_{\lambda}\F=0\}
\]
if $(X,\lambda)$ is a pre-approach space. 

In particular, if $A\subset X$ then
\[
\adh_{r(\lambda)}A=\bigcup_{\epsilon<\infty}A^{(\epsilon)},
\]
and if $(X,\lambda)$ is moreover a pre-approach space, $\adh_{c(\lambda)}A=A^{(0)}$.
\end{prop}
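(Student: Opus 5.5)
The plan is to unwind both adherences through the finer ultrafilters, using the formula $\adh_{\lambda}\F=\bigwedge_{\U\in\beta(\F)}\lambda\U$ together with its convergence analogue (\ref{eq:adherencefilter}), $\adh_{\xi}\F=\bigcup_{\U\in\beta(\F)}\lm_{\xi}\U$, with $\xi$ equal to $r(\lambda)$ or $c(\lambda)$.

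\textbf{The reflection $r(\lambda)$.} By (\ref{eq:rrefl}), $x\in\adh_{r(\lambda)}\F$ if and only if there is $\U\in\beta(\F)$ with $\lambda(\U)(x)<\infty$.

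For the inclusion $\adh_{r(\lambda)}\F\subset\{\adh_{\lambda}\F<\infty\}$: such a $\U$ gives $\adh_{\lambda}\F(x)\leq\lambda(\U)(x)<\infty$.

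For the reverse inclusion: if $\adh_{\lambda}\F(x)<\infty$, the infimum $\bigwedge_{\U\in\beta(\F)}\lambda\U(x)$ is finite. An infimum of values in $[0,\infty]$ that is finite is attained below $\infty$ by some member, so some $\U\in\beta(\F)$ has $\lambda(\U)(x)<\infty$. This needs no attainment of the infimum itself, so it holds for an arbitrary preconvergence approach space.

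\textbf{The coreflection $c(\lambda)$.} If $x\in\adh_{c(\lambda)}\F$, there is $\U\in\beta(\F)$ with $\lambda(\U)(x)=0$, hence $\adh_{\lambda}\F(x)=0$. This gives the general inclusion.

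The reverse inclusion is the only real obstacle: an infimum equal to $0$ need not be attained. This is exactly where the pre-approach hypothesis enters, through Proposition \ref{prop:adhpreap}. If $\adh_{\lambda}\F(x)=0$, that proposition provides $\U\in\beta\F$ with $\lambda(\U)(x)=0$, i.e.\ $x\in\lm_{c(\lambda)}\U$, so $x\in\adh_{c(\lambda)}\F$. Equivalently, one can invoke Corollary \ref{cor:toweradh} with $\epsilon=0$, noting that $\tau_{0}=c(\lambda)$ by (\ref{eq:CAPtotower}) and (\ref{eq:ccorefl}).

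\textbf{Principal filters.} Take $\F=\{A\}^{\uparrow}$. For $r(\lambda)$,
\[
\{\adh_{\lambda}A<\infty\}=\bigcup_{\epsilon<\infty}\{\adh_{\lambda}A\leq\epsilon\}=\bigcup_{\epsilon<\infty}A^{(\epsilon)}.
\]
For $c(\lambda)$ in the pre-approach case, $\{\adh_{\lambda}A=0\}=A^{(0)}$ by the definition of $A^{(\epsilon)}$.
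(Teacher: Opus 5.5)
Your proof is correct and is essentially the paper's argument: the only difference is that you pass through ultrafilters in $\beta(\F)$ where the paper works directly with filters $\G\#\F$. The two descriptions of adherence agree by monotonicity alone, so they remain valid for the possibly non-centered preconvergences $r(\lambda)$ and $c(\lambda)$. As in the paper, the reverse inclusion for $c(\lambda)$ in the pre-approach case rests on Proposition \ref{prop:adhpreap}.
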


\begin{proof}
Note that $x\in\adh_{r(\lambda)}\F$ if and only if there is a filter
$\G$ with $\G\#\F$ and $x\in\lm_{r(\lambda)}\G$, equivalently,
$\lambda(\G)(x)<\infty$, so that $\adh_{\lambda}\F(x)<\infty$. conversely,
if $\adh_{\lambda}\F(x)=\bigwedge_{\G\#\F}\lambda(\G)(x)$ is finite,
there is a filter $\G$ with $\G\#\F$ and $\lambda(\G)(x)<\infty$,
equivalently, $x\in\lm_{r(\lambda)}\G$.

Similarly, $x\in\adh_{c(\lambda)}\F$ if and only if there is a filter
$\G$ with $\G\#\F$ and $x\in\lm_{c(\lambda)}\G$, equivalently,
$\lambda(\G)(x)=0$, so that $\adh_{c(\lambda)}\F\subset\{\adh_{\lambda}\F=0\}$.
In view of Proposition \ref{prop:adhpreap}, the reverse inclusion
is true in a pre-approach space.
\end{proof}
\begin{example}[A subset $A$ of convergence approach space $(X,\lambda)$ where $A^{(0)}\not\subset\adh_{c(\lambda)}A$]

Let $X$ be an infinite set and let $x_{0}\in X$. Let $(\U_{n})_{n\in\mathbb{N}}$
be a sequence of distinct free ultrafilters on $X\setminus\{x_{0}\}$
and let $\lambda:\mathbb{F}X\to V^{X}$ be given by 
\[
\lambda(\F)(x)=\begin{cases}
\frac{1}{n} & \F=\U_{n},x=x_{0}\\
\infty & \F\in\{\U_{n}:n\in\mathbb{N}\}x\neq x_{0}\\
\infty & \F\notin\{\U_{n}:n\in\mathbb{N}\}\cup\{\{t\}^{\uparrow}:t\in X\}\\
0 & \F=\{x\}^{\uparrow}
\end{cases}.
\]

This is a convergence approach space. Moreover, if $A=X\setminus\{x_{0}\}$
then $x_{0}\notin A=\adh_{c(\lambda)}A$ but $x_{0}\in A^{(0)}$ because
\[
\adh_{\lambda}A(x_{0})=\bigwedge_{\U\in\beta A}\lambda\U(x_{0})\leq\bigwedge_{n\in\mathbb{N}}\lambda\U_{n}(x_{0})=0.
\]
\end{example}

\begin{cor}
A subset $A$ of a preconvergence approach space $(X,\lambda)$ is
$r(\lambda)$-closed if and only if $A^{(\epsilon)}=A$ for every
$\epsilon\in[0,\infty)$, while in a pre-approach space, $A$ is $c(\lambda)$-closed
if and only if $A=A^{(0)}$.
\end{cor}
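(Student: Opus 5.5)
The plan is to reduce both claims to the description of principal adherences in Proposition \ref{prop:adhpreapcorefl}, combined with the characterization of closed sets in a convergence space. For a convergence $\xi$, a set $A$ is $\xi$-closed if and only if $\adh_{\xi}A\subset A$. Indeed, $\adh_{\xi}A=\bigcup_{A\in\G}\lm_{\xi}\G$, so $\adh_{\xi}A\subset A$ says exactly that $A\in\G$ implies $\lm_{\xi}\G\subset A$. When $\xi$ is centered we also have $A\subset\adh_{\xi}A$, so in that case closedness is equivalent to $\adh_{\xi}A=A$.

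\textbf{The $r(\lambda)$ part.} By Proposition \ref{prop:adhpreapcorefl}, $\adh_{r(\lambda)}A=\bigcup_{\epsilon<\infty}A^{(\epsilon)}$ for any preconvergence approach space. The sets $A^{(\epsilon)}$ increase with $\epsilon$.
\begin{itemize}
\item If $A^{(\epsilon)}=A$ for every finite $\epsilon$, the union is $A$, so $\adh_{r(\lambda)}A\subset A$ and $A$ is $r(\lambda)$-closed.
\item Conversely, if $A$ is $r(\lambda)$-closed, then each $A^{(\epsilon)}\subset\adh_{r(\lambda)}A\subset A$.
\end{itemize}
For the reverse inclusion $A\subset A^{(\epsilon)}$ I would invoke Lemma \ref{lem:centered-1}(3), which needs centeredness. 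The standing convention for $\Cap$ objects makes $\lambda$ centered, so I will read the statement in that setting. Then $A\subset A^{(0)}\subset A^{(\epsilon)}$, and hence $A^{(\epsilon)}=A$.

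\textbf{The $c(\lambda)$ part.} In a pre-approach space, Proposition \ref{prop:adhpreapcorefl} gives $\adh_{c(\lambda)}A=A^{(0)}$. So $A$ is $c(\lambda)$-closed if and only if $A^{(0)}\subset A$. A pre-approach space is by definition a convergence approach space, hence centered, and Lemma \ref{lem:centered-1}(4) gives $A\subset A^{(0)}$. Therefore $c(\lambda)$-closedness is equivalent to $A=A^{(0)}$.

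\textbf{Main obstacle.} The only delicate point is the hypothesis needed in the first part. For a merely precentered or non-centered $\lambda$, the inclusion $A\subset A^{(\epsilon)}$ can fail while $A$ is still $r(\lambda)$-closed. In that case I would weaken the conclusion to $A^{(\epsilon)}\subset A$ for all finite $\epsilon$, which the argument above proves with no centering assumption. Under centering, that weaker statement is equivalent to the stated equality.
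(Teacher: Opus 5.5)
Your proof is correct and follows the route the paper intends; the paper gives no separate proof and treats the corollary as an immediate consequence of Proposition \ref{prop:adhpreapcorefl}, reading $\adh_{r(\lambda)}A=\bigcup_{\epsilon<\infty}A^{(\epsilon)}$ and, in a pre-approach space, $\adh_{c(\lambda)}A=A^{(0)}$ off that proposition and combining them with ``closed iff $\adh A\subset A$'' and the centered inclusion $A\subset A^{(0)}\subset A^{(\epsilon)}$. Your caveat on the first part is needed and is not just a matter of convention: the paper's preconvergence approach spaces need not be centered, and then the literal statement fails (with $\lambda\equiv\infty$, every nonempty $A$ is $r(\lambda)$-closed while $A^{(\epsilon)}=\emptyset$), so it holds as stated only for centered $\lambda$, and in general only in your weakened form $A^{(\epsilon)}\subset A$ for all finite $\epsilon$.
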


In particular, every $r(\lambda)$-closed set is also $c(\lambda)$-closed,
that is, $\C_{r(\lambda)}\subset\C_{c(\lambda)}$.

If $f:X\to[0,\infty]$, $v\in[0,\infty]$ and $\square$ is one of
$<,\leq,=,\geq,>$, we use the shorthand 
\[
\{f\square v\}:=\{x\in X:f(x)\square v\}.
\]

\begin{lem}
\label{lem:alphaofadhlesseps} If $(X,\lambda)$ is an approach space,
$\F\in\mathbb{F}X$, $\epsilon,\alpha\in[0,\infty]$ then
\[
\{\adh\F<\epsilon\}^{(\alpha)}\subset\{\adh\F\leq\epsilon\}^{(\alpha)}\subset\{\adh\F\leq\epsilon+\alpha\}.
\]
\end{lem}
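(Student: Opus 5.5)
The first inclusion is monotonicity of the operation $B\mapsto B^{(\alpha)}$. If $B\subset C$ then $\beta B\subset\beta C$, so $\adh C=\bigwedge_{\U\in\beta C}\lambda\U\leq\adh B$ pointwise, and therefore $B^{(\alpha)}\subset C^{(\alpha)}$. Applying this with $B=\{\adh\F<\epsilon\}$ and $C=\{\adh\F\leq\epsilon\}$ gives the first inclusion.

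For the second inclusion, set $B=\{\adh\F\leq\epsilon\}$ and fix $x$ with $\adh B(x)\leq\alpha$; we must show $\adh\F(x)\leq\epsilon+\alpha$. The case $\epsilon=\infty$ or $\alpha=\infty$ is trivial, so assume both are finite. The plan is to use that an approach space is a pre-approach space, together with the diagonal axiom (\ref{eq:diagfilters}) from Proposition \ref{prop:twodiagonalaxioms}, whose converse direction applies here.

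First we choose witnesses. By Proposition \ref{prop:adhpreap}, for each $t\in B$ there is an ultrafilter $\G(t)\in\beta(\F)$ with $\lambda(\G(t))(t)=\adh\F(t)\leq\epsilon$. For $t\notin B$ we set $\G(t)=\{t\}^{\uparrow}$; this gives $\lambda(\G(t))(t)=0$ by centeredness. Similarly, since $\adh B(x)\leq\alpha$, Proposition \ref{prop:adhpreap} gives an ultrafilter $\U\in\beta B$ with $\lambda(\U)(x)=\adh B(x)\leq\alpha$.

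Next we apply diagonality. By (\ref{eq:diagfilters}),
\[
\lambda(\G(\U))(x)\leq\lambda(\U)(x)+\bigvee_{t\in X}\lambda(\G(t))(t)\leq\alpha+\epsilon.
\]
It remains to check that the contour $\G(\U)$ meshes with $\F$. Take $F\in\F$. Since $B\in\U$, the set $\bigcap_{t\in B}\G(t)$ is contained in $\G(\U)$. Each $\G(t)$ with $t\in B$ is finer than $\F$, so $F\in\G(t)$ for every $t\in B$. Hence $F\in\G(\U)$, and so $\G(\U)\geq\F$, which in particular means $\G(\U)\#\F$. Therefore
\[
\adh\F(x)=\bigwedge_{\H\#\F}\lambda(\H)(x)\leq\lambda(\G(\U))(x)\leq\epsilon+\alpha.
\]

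The main obstacle is ensuring that the values $\adh\F(t)\leq\epsilon$ are actually attained by filters, since otherwise we would only get $\epsilon+\delta$ for every $\delta>0$. Proposition \ref{prop:adhpreap} supplies attainment. Even without it, an approximation by $\epsilon+\delta$ followed by letting $\delta\to0$ would suffice, because the right-hand bound is non-strict.
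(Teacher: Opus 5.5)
Your proof is correct and follows the paper's argument. Both take an ultrafilter $\U$ containing $\{\adh\F\leq\epsilon\}$ with $\lambda(\U)(x)\leq\alpha$, choose witnesses in $\beta(\F)$ at each point of that set via Proposition \ref{prop:adhpreap}, apply the diagonal inequality (\ref{eq:diagfilters}), and observe that the resulting contour meshes with (indeed is finer than) $\F$. You also fill in details the paper leaves implicit: you set $\G(t)=\{t\}^{\uparrow}$ off that set so that $\G$ is defined on all of $X$, you handle the infinite cases, and you give the monotonicity argument for the first inclusion.
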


\begin{proof}
Suppose $x\in\{\adh\F\leq\epsilon\}^{(\alpha)}$. In view of Proposition
\ref{prop:adhpreap}, there is an ultrafilter $\U$ with $\{\adh\F\leq\epsilon\}\in\U$
and $\lambda(\U)(x)\leq\alpha$. For every $t\in\{\adh\F\leq\epsilon\}$,
there is $\W_{t}\in\beta(\F)$ with $\lambda(\W_{t})(t)\leq\epsilon$.
By (\ref{eq:diagfilters}),
\[
\lambda(\W_{t}(\U))(x)\leq\lambda(\U)(x)+\bigvee_{t\in\{\adh\F<\epsilon\}}\lambda(\W_{t})(t)\leq\alpha+\epsilon.
\]
Moreover, $\W_{t}(\U)\#\F$ because $\W_{t}\in\beta(\F)$ for every
$t$ and thus $x\in\{\adh\F\leq\epsilon+\alpha\}$. Taking $\F=\{A\}^{\uparrow}$
yields the particular case.
\end{proof}
\begin{rem}
\label{rem:clambdaclusre} In view of Proposition \ref{prop:adhpreapcorefl},
if $(X,\lambda)$ is a pre-approach space,
\[
\adh_{c(\lambda)}\left\{ \adh_{\lambda}\F<\epsilon\right\} =\left\{ \adh_{\lambda}\F<\epsilon\right\} ^{(0)}\subset\left\{ \adh_{\lambda}\F\leq\epsilon\right\} 
\]
because $\left\{ \adh_{\lambda}\F\leq\epsilon\right\} =\bigcap_{F\in\F}F^{(\epsilon)}$,
which is $c(\lambda)$-closed by Lemma \ref{lem:alphaofadhlesseps}.
The reverse inclusion is false:
\end{rem}

\begin{example}
Let $X$ be an infinite set and let $x_{0}\in X$. Let $\mathbb{F}^{*}X$
denote the set of free filters on $X$. Let $\lambda:\mathbb{F}X\to V^{X}$
be given by 
\[
\lambda(\F)(x)=\begin{cases}
1 & \F\in\mathbb{F}^{*}X,x=x_{0}\\
\infty & \F\in\mathbb{F}^{*}X,x\neq x_{0}\\
\infty & \F\notin\mathbb{F}^{*}X\cup(\mathbb{U}X\cap\mathbb{F}_{0}X)\\
0 & \F=\{x\}^{\uparrow}
\end{cases}.
\]
This is easily seen to be a $\prap$-structure on $X$. If $A$ is
an infinite subset of $X$ with $x_{0}\notin A$ then 
\[
\{x\in X:\adh A(x)<1\}^{(0)}=A^{(0)}=A
\]
and $\{x\in X:\adh A(x)\leq1\}=A\cup\{x_{0}\}$.
\end{example}

\begin{rem}
\label{rem:netsinCAP} If $(X,\lambda)$ is a (pre)convergence approach
space we define the convergence of nets as we did for topological
or convergence spaces. Namely, if $\X$ is a net on $X$ then
\[
\lambda\X(x):=\lambda\F_{\X}(x).
\]
\end{rem}

\section{Kuratowski Convergence In Cap Spaces}

\global\long\def\UU{\mathfrak{U}}%
We will consider the operation $\oslash$ defined for $x\in[0,\infty)$
and $y\in[0,\infty]$ by 
\[
x\oslash y=\begin{cases}
x/y & y\notin\{0,\infty\},\\
0 & y=\infty\\
\infty & y=0
\end{cases}.
\]

Note that 
\begin{equation}
1\oslash\bigwedge_{a\in A}a=\bigvee_{a\in A}1\oslash a,\label{eq:oslahofinf}
\end{equation}
for every $A\subset[0,\infty]$, and 
\begin{equation}
1\oslash x\leq y\iff1\oslash y\leq x\label{eq:divideineq}
\end{equation}
for every $x,y\in[0,\infty]$.

Given a convergence approach space $(X,\lambda),$ let $\C_{r(\lambda)}$,
respectively $\C_{c(\lambda)}$, denote the set of all $r(\lambda)$-closed
, respectively $c(\lambda)$-closed subsets of $X$.

Let $\FF\in\mathbb{F}(\C_{c(\lambda)})$ and $A\in\C_{c(\lambda)}$
and define 
\begin{equation}
\lambda_{uK}\FF(A)=\bigvee_{x\notin A}1\oslash\adh_{\lambda}(\rdc\FF)(x)=1\oslash\bigwedge_{x\notin A}\adh_{\lambda}(\rdc\FF)(x)\label{eq:lambdauK}
\end{equation}
\begin{equation}
\lambda_{lK}\FF(A)=\bigvee_{x\in A}\adh_{\lambda}(\rdc\FF^{\#})(x),\label{eq:lambdalK}
\end{equation}
and 
\[
\lambda_{K}\FF(A)=\lambda_{uK}\FF(A)\vee\lambda_{lK}\FF(A).
\]

Given an $\epsilon\in[0,\infty]$, the filter $\FF$ is said to be
$\epsilon$-\emph{upper Kuratowski convergent} (respectively $\epsilon$-\emph{lower
Kuratowski convergent}, respectively $\epsilon$-\emph{Kuratowski
convergent}) to $A$ if $\lambda_{uK}\FF(A)\leq\epsilon$ (respectively
$\lambda_{lK}\FF(A)\leq\epsilon$, respectively $\lambda_{K}\FF(A)\leq\epsilon$),
in which case we write $A\in\lm_{uK_{\epsilon}}\FF$, respectively
$A\in\lm_{lK_{\epsilon}}\FF$, respectively $A\in\lm_{K_{\epsilon}}\FF$.

Note in particular that if $\UU\in\mathbb{U}(\C_{c(\lambda)})$ then
\[
\lambda_{lK}\UU(A)=\bigvee_{x\in A}\adh_{\lambda}(\rdc\UU)(x),
\]
 and 
\[
\lambda_{K}\UU(A)=\bigvee_{x\in A}\adh_{\lambda}(\rdc\UU)(x)\vee\bigvee_{x\notin A}1\oslash\adh_{\lambda}(\rdc\UU)(x).
\]

It follows immediately from the definitions that
\begin{lem}
\label{lem:uKantitoneonlimitpoints}If $\FF\in\mathbb{F}\C_{c(\lambda)}$,
$A,B\in\C_{c(\lambda)}$ and $A\subset B$ then
\[
\lambda_{uK}\FF(B)\leq\lambda_{uK}\FF(A),
\]
\[
\lambda_{lK}\FF(A)\leq\lambda_{lK}\FF(B).
\]
\end{lem}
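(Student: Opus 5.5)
The plan is to read both inequalities directly off the defining formulas (\ref{eq:lambdauK}) and (\ref{eq:lambdalK}). In each, the filter $\FF$ enters only through a function on $X$ that does not involve the candidate limit, and the limit set enters only through the index set of a supremum. So the whole argument is that a supremum over a smaller index set is no larger.

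\textbf{Upper Kuratowski.} Set $g:=\adh_{\lambda}(\rdc\FF)$, a function $X\to[0,\infty]$ that depends only on $\FF$. By (\ref{eq:lambdauK}),
\[
\lambda_{uK}\FF(A)=\bigvee_{x\in X\setminus A}1\oslash g(x).
\]
If $A\subset B$ then $X\setminus B\subset X\setminus A$. Hence the supremum defining $\lambda_{uK}\FF(B)$ runs over a subfamily of the one defining $\lambda_{uK}\FF(A)$, which gives $\lambda_{uK}\FF(B)\leq\lambda_{uK}\FF(A)$.

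If one prefers the second expression in (\ref{eq:lambdauK}), the same conclusion follows. The infimum of $g$ over the larger set $X\setminus A$ is at most the infimum over $X\setminus B$. By (\ref{eq:divideineq}), or simply because $1\oslash\cdot$ is antitone on $[0,\infty]$ (including the conventions $1\oslash0=\infty$ and $1\oslash\infty=0$), this reverses to the same inequality. The empty-index case needs no separate treatment: if $B=X$, then $\lambda_{uK}\FF(B)=0$ as an empty supremum, and $0$ is a lower bound for everything.

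\textbf{Lower Kuratowski.} Set $h:=\adh_{\lambda}(\rdc\FF^{\#})$, which again depends only on $\FF$. Then $\lambda_{lK}\FF(A)=\bigvee_{x\in A}h(x)$. For $A\subset B$ this is a supremum over a subset of the index set $B$, so $\lambda_{lK}\FF(A)\leq\lambda_{lK}\FF(B)$.

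I do not expect any genuine obstacle. The only point needing care is confirming that $1\oslash\cdot$ is order-reversing at the degenerate values $0$ and $\infty$, so that the two forms of (\ref{eq:lambdauK}) agree. That is exactly what (\ref{eq:oslahofinf}) records.
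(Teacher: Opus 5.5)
Your proposal is correct and matches the paper, which simply states that the lemma follows immediately from the definitions. Both inequalities hold because a supremum over a smaller index set ($X\setminus B\subset X\setminus A$ for $\lambda_{uK}$, and $A\subset B$ for $\lambda_{lK}$) is no larger, since the functions $\adh_{\lambda}(\rdc\FF)$ and $\adh_{\lambda}(\rdc\FF^{\#})$ do not depend on the candidate limit.
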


Since $\bigvee_{x\notin\bigcap\A}f(x)=\bigvee_{x\in\bigcup_{A\in\A}(X\setminus A)}f(x)=\bigvee_{A\in\A}\bigvee_{x\notin A}f(x)$,
we have
\begin{equation}
\lambda_{uK}\FF(\bigcap_{A\in\A}A)=\bigvee_{A\in\A}\lambda_{uK}\FF(A).\label{eq:lambdauKofintersection}
\end{equation}

\begin{prop}
$\lambda_{lK}$ is a convergence approach structure on $\C_{c(\lambda)}$
and $\lambda_{uK}$ and $\lambda_{K}$ are preconvergence approach
structures on $\C_{c(\lambda)}$ that are centered on $\C_{r(\lambda)}$.
\end{prop}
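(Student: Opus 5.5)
We need to check two things for each of $\lambda_{uK}$, $\lambda_{lK}$ and $\lambda_{K}$: monotonicity (\ref{eq:CAPmonotone}) on $\mathbb{F}(\C_{c(\lambda)})$, and centeredness. For $\lambda_{lK}$ we want centeredness at every $A\in\C_{c(\lambda)}$; for $\lambda_{uK}$ and $\lambda_{K}$ only at $A\in\C_{r(\lambda)}$.

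\emph{Monotonicity.} Let $\FF\leq\GG$ be filters on $\C_{c(\lambda)}$. Every $\G\in\GG$ contains some $\F\in\FF$, so $\rdc\FF\leq\rdc\GG$ as families of subsets of $X$. The adherence function is antitone with respect to this order: if $\H\#\rdc\GG$ then $\H\#\rdc\FF$, hence $\adh_{\lambda}\rdc\FF\leq\adh_{\lambda}\rdc\GG$. Since $x\mapsto1\oslash x$ is antitone, (\ref{eq:lambdauK}) gives $\lambda_{uK}\GG\leq\lambda_{uK}\FF$. For the lower structure, $\FF\leq\GG$ gives $\GG^{\#}\subset\FF^{\#}$, so $\rdc\GG^{\#}\subset\rdc\FF^{\#}$. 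Any filter meshing $\rdc\FF^{\#}$ also meshes the smaller family $\rdc\GG^{\#}$, hence $\adh_{\lambda}\rdc\GG^{\#}\leq\adh_{\lambda}\rdc\FF^{\#}$, and so $\lambda_{lK}\GG\leq\lambda_{lK}\FF$. Taking the supremum of the two inequalities handles $\lambda_{K}$. Since the definition of adherence applies to arbitrary families, these comparisons need no filter property of $\rdc\FF$.

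\emph{Centeredness of $\lambda_{lK}$.} Take $\FF=\{A\}^{\uparrow}$ in $\mathbb{F}(\C_{c(\lambda)})$, the principal ultrafilter at the point $A$. Then $\FF^{\#}=\FF$, and every member of $\FF$ contains $A$, so $A\subset\bigcup\F$ for each $\F\in\FF$. For $x\in A$, the principal filter $\{x\}^{\uparrow}$ meshes $\rdc\FF^{\#}$. Since $\lambda$ is centered, $\adh_{\lambda}(\rdc\FF^{\#})(x)\leq\lambda(\{x\}^{\uparrow})(x)=0$. Therefore $\lambda_{lK}\{A\}^{\uparrow}(A)=0$, which holds for every $c(\lambda)$-closed $A$. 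Together with monotonicity, $\lambda_{lK}$ is a convergence approach structure.

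\emph{The upper structure.} This is the one delicate point. For $\FF=\{A\}^{\uparrow}$ we have $\{A\}\in\FF$, so $(\rdc\FF)^{\uparrow}=\{A\}^{\uparrow}$. Hence $\adh_{\lambda}(\rdc\FF)=\adh_{\lambda}A$, and
\[
\lambda_{uK}\{A\}^{\uparrow}(A)=1\oslash\bigwedge_{x\notin A}\adh_{\lambda}A(x).
\]
This value is $0$ exactly when $\adh_{\lambda}A(x)=\infty$ for all $x\notin A$, that is, when $\bigcup_{\epsilon<\infty}A^{(\epsilon)}\subset A$. By Proposition \ref{prop:adhpreapcorefl} this says $\adh_{r(\lambda)}A\subset A$, i.e.\ $A\in\C_{r(\lambda)}$; compare the corollary characterizing $r(\lambda)$-closed sets. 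So $\lambda_{uK}$ is centered at each $A\in\C_{r(\lambda)}$. For $\lambda_{K}$ the value at $\{A\}^{\uparrow}$ is the supremum of two zeros when $A\in\C_{r(\lambda)}$, using $\C_{r(\lambda)}\subset\C_{c(\lambda)}$.

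To justify the word ``only'' in the statement, I would also point out that for $A\in\C_{c(\lambda)}\setminus\C_{r(\lambda)}$ some $x\notin A$ has finite $\adh_{\lambda}A(x)$, so the value above is positive. Centeredness therefore genuinely fails off $\C_{r(\lambda)}$, which is why $\lambda_{uK}$ and $\lambda_{K}$ are only preconvergence approach structures.
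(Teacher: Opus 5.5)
Your proof is correct and follows the paper's route: monotonicity from $\rdc\FF\leq\rdc\GG$ and $\rdc(\GG^{\#})\subset\rdc(\FF^{\#})$, and centeredness by evaluating at $\{A\}^{\uparrow}$, where $\adh_{\lambda}A$ vanishes on $A$ and is $\infty$ off $A$ exactly when $A\in\C_{r(\lambda)}$. One slip to fix: $\FF\leq\GG$ means $\FF\subset\GG$, i.e.\ every $\F\in\FF$ contains some $\G\in\GG$; you wrote the reverse (every $\G\in\GG$ contains some $\F\in\FF$), which would give $\rdc\FF\geq\rdc\GG$, though the conclusion $\rdc\FF\leq\rdc\GG$ that you actually use is the correct one.
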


\begin{proof}
If $\FF=\{A\}^{\uparrow_{\C_{c(\lambda)}}}$ then $\FF=\FF^{\#}$
and $(\rdc\FF)^{\uparrow}=\rdc(\FF^{\#})=\{A\}^{\uparrow_{X}}$. If
$A$ is $r(\lambda)$-closed, $\adh_{\lambda}A(x)=\infty$ for all
$x\notin A$. Hence, $1\oslash\adh_{\lambda}(\rdc\FF)(x)=0$ for all
$x\notin A$. Thus, $\lambda_{uK}(\{A\}^{\uparrow_{\C_{r(\lambda)}}})(A)=0$.
On the other hand, $\adh_{\lambda}A(x)=0$ for all $x\in A$ and $\lambda_{lK}(\{A\}^{\uparrow_{\C_{c(\lambda)}}})(A)=0$
for every $A\in\C_{c(\lambda)}$.

If $\FF\geq\GG$ then $\rdc\FF\geq\rdc\GG$ and $\GG^{\#}\supset\FF^{\#}$
so that $\rdc(\GG^{\#})\geq\rdc(\FF^{\#})$. Thus $\adh(\rdc\FF)\geq\adh(\rdc\GG)$
and $1\oslash\adh_{\lambda}(\rdc\GG)\geq1\oslash\adh_{\lambda}(\rdc\FF)$,
so that $\lambda_{uK}\GG\geq\lambda_{uK}\FF$. On the other hand,
$\adh(\rdc(\GG^{\#}))\geq\adh(\rdc(\FF^{\#}))$ and thus $\lambda_{lK}\GG\geq\lambda_{lK}\FF$.

The properties follow immediately for $\lambda_{K}$ on $\C_{r(\lambda)}$.
\end{proof}
\begin{prop}
\label{prop:Ktowers} If $(X,\lambda)$ is a convergence approach
space, $A\in\C_{c(\lambda)}$, $\epsilon\in[0,\infty]$, and $\FF$
is a filter on $\C_{c(\lambda)}$, then
\[
A\in\lm_{lK_{\epsilon}}\FF\iff A\subset\{\adh_{\lambda}\rdc(\FF^{\#})\leq\epsilon\}
\]
defines the tower $\{\lim_{lK_{\epsilon}}:\epsilon\in[0,\infty]\}$
associated with $\lambda_{lK}$ via (\ref{eq:CAPtotower}). Similarly,
\[
A\in\lm_{uK_{\epsilon}}\FF\iff\{\adh_{\lambda}(\rdc\FF)<1\oslash\epsilon\}\subset A,
\]
defines the tower $\{\lim_{uK_{\epsilon}}:\epsilon\in[0,\infty]\}$
associated with $\lambda_{uK}$ via (\ref{eq:CAPtotower}).
\end{prop}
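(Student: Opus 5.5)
The plan is to unwind the definitions (\ref{eq:lambdalK}) and (\ref{eq:lambdauK}) against the correspondence (\ref{eq:CAPtotower}), which says that $A\in\lm_{lK_{\epsilon}}\FF$ (resp. $\lm_{uK_{\epsilon}}$) exactly when $\lambda_{lK}\FF(A)\leq\epsilon$ (resp. $\lambda_{uK}\FF(A)\leq\epsilon$). This is the definition of $\epsilon$-(upper/lower) Kuratowski convergence already given. Hence all that has to be checked is that these inequalities are equivalent to the stated set inclusions. There is nothing to verify about the tower axioms separately: by the equivalence between $\Cap_{*}$ and preconvergence towers recalled before (\ref{eq:towertoCAP}), the family defined by (\ref{eq:CAPtotower}) from a preconvergence approach structure is automatically its tower.

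For the lower part, I would note that $\lambda_{lK}\FF(A)=\bigvee_{x\in A}\adh_{\lambda}(\rdc\FF^{\#})(x)\leq\epsilon$ holds if and only if $\adh_{\lambda}(\rdc\FF^{\#})(x)\leq\epsilon$ for every $x\in A$. This is just the defining property of a supremum in $[0,\infty]$. The latter condition says precisely that $A\subset\{\adh_{\lambda}\rdc(\FF^{\#})\leq\epsilon\}$, which is the first assertion.

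For the upper part, I would use (\ref{eq:oslahofinf}) to write $\lambda_{uK}\FF(A)=\bigvee_{x\notin A}1\oslash\adh_{\lambda}(\rdc\FF)(x)$. Then $\lambda_{uK}\FF(A)\leq\epsilon$ if and only if $1\oslash\adh_{\lambda}(\rdc\FF)(x)\leq\epsilon$ for all $x\notin A$. Taking contrapositives, this says that every $x$ with $1\oslash\adh_{\lambda}(\rdc\FF)(x)>\epsilon$ lies in $A$. The key step is to show that, for $a=\adh_{\lambda}(\rdc\FF)(x)\in[0,\infty]$,
\[
1\oslash a>\epsilon\iff a<1\oslash\epsilon .
\]
This is the strict form of (\ref{eq:divideineq}). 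I would obtain it by negating (\ref{eq:divideineq}) with the roles of the variables arranged appropriately: $1\oslash a\leq\epsilon\iff1\oslash\epsilon\leq a$, and negating both sides gives $1\oslash a>\epsilon\iff a<1\oslash\epsilon$. Substituting this yields $\{\adh_{\lambda}(\rdc\FF)<1\oslash\epsilon\}\subset A$.

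The main obstacle is the boundary cases of $\oslash$, where $\epsilon$ or $a$ is $0$ or $\infty$. For instance, when $\epsilon=\infty$ the convention gives $1\oslash\epsilon=0$, so the set $\{\adh<0\}$ is empty and every $A$ qualifies, matching $\lambda_{uK}\FF(A)\leq\infty$. When $\epsilon=0$, $1\oslash\epsilon=\infty$, and the condition becomes $\{\adh<\infty\}\subset A$, matching $1\oslash a\leq0\iff a=\infty$. I would check (\ref{eq:divideineq}) case by case to confirm it holds in all these degenerate instances, and then the proof closes.
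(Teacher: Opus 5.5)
Your proposal is correct and is essentially the paper's own argument. You unwind $\lambda_{lK}\FF(A)\leq\epsilon$ and $\lambda_{uK}\FF(A)\leq\epsilon$ as pointwise bounds over $x\in A$ (resp.\ $x\notin A$), and you convert the upper one via (\ref{eq:divideineq}) into the stated inclusion, phrased through its negated strict form; your remarks that the tower axioms are automatic and that the boundary values of $\oslash$ work out are accurate but not needed.
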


\begin{proof}
By definition, $A\in\lm_{lK_{\epsilon}}\FF$ if $\bigvee_{x\in A}\adh_{\lambda}\rdc(\FF^{\#})\leq\epsilon$,
equivalently, if $A\subset\{\adh_{\lambda}\rdc(\FF^{\#})\leq\epsilon\}$.

Similarly, by definition, 
\[
A\in\lm_{uK_{\epsilon}}\FF\iff\bigvee_{x\notin A}1\oslash\adh_{\lambda}(\rdc\FF)(x)\leq\epsilon,
\]
 that is, $1\oslash\adh_{\lambda}(\rdc\FF)(x)\leq\epsilon$, equivalently,
$\adh_{\lambda}(\rdc\FF)(x)\geq1\oslash\epsilon$ for every $x\notin A$,
by (\ref{eq:divideineq}). In other words, $A\in\lm_{uK_{\epsilon}}\FF$
if and only if $\{\adh_{\lambda}(\rdc\FF)<1\oslash\epsilon\}\subset A$.
\end{proof}
\begin{thm}
If $(X,\lambda)$ is an approach space then $\lambda_{lK}$ is an
approach structure on $\C_{c(\lambda)}$.
\end{thm}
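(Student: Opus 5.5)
To show that $\lambda_{lK}$ is an approach structure, I will prove two things. First, $\lambda_{lK}$ is a pre-approach structure, i.e., it satisfies (\ref{eq:preAP}). Second, it satisfies the diagonal condition (\ref{eq:diagadh}); by Proposition \ref{prop:twodiagonalaxioms} it is enough to establish (\ref{eq:diagfilters}) at every point. The previous proposition already gives that $\lambda_{lK}$ is a convergence approach structure on $\C_{c(\lambda)}$.

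\textbf{Step 1: the pre-approach property.} Let $\mathbb{D}$ be a family of filters on $\C_{c(\lambda)}$ and put $\FF=\bigcap_{\mathbb{D}}$. Then $\FF^{\#}=\bigcup_{\DD\in\mathbb{D}}\DD^{\#}$, so $\rdc(\FF^{\#})=\bigcup_{\DD}\rdc(\DD^{\#})$ as families of sets. Adherence of a family is defined as an infimum over meshing filters, which is antitone in the family. For a union of families we should therefore get
\[
\adh_{\lambda}\rdc(\FF^{\#})=\bigwedge_{\DD}\adh_{\lambda}\rdc(\DD^{\#}).
\]
That goes in the wrong direction, so I will instead use the pre-approach description of $\lambda$. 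In a pre-approach space, $\adh_{\lambda}$ of a family $\A$ equals $\bigvee$ over the filter generated by $\A$; more concretely, $\adh\A(x)\le\epsilon$ iff $x\in\adh_{\lambda_\epsilon}\A$ by Corollary \ref{cor:toweradh}. So I work through towers. By Proposition \ref{prop:Ktowers}, $A\in\lm_{lK_\epsilon}\FF$ iff $A\subset\adh_{\lambda_\epsilon}\rdc(\FF^{\#})$, and each $\lambda_\epsilon$ is a pretopology. Proposition \ref{prop:pretoplK}, applied to $(X,\lambda_\epsilon)$, then shows that $\lm_{lK_\epsilon}$ is the lower Kuratowski convergence of a pretopology, hence pretopological. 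Thus $\{\lm_{lK_\epsilon}\}$ is a tower of pretopologies, which gives (\ref{eq:preAP}). It also gives an explicit vicinity filter,
\[
\V_{lK_\epsilon}(A)=\bigvee_{x\in A}\{V^{-}:V\in\V_{\lambda_\epsilon}(x)\}^{\uparrow}.
\]
I need to check that the argument of Proposition \ref{prop:pretoplK} and (\ref{eq:BingrillofrdcFgrill}) only uses that $\FF$ lives on closed sets of the coarser structure. Here all sets are $c(\lambda)$-closed, and the grill computation in Lemma \ref{lem:grillrdc} was purely set-theoretic, so this should be fine.

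\textbf{Step 2: the diagonal condition.} I will use the tower form of Proposition \ref{prop:diagonaltower}. Fix $\SS:\C\to\mathbb{F}\C$ with $B\in\lm_{lK_\epsilon}\SS(B)$ for all $B$, and suppose $A\in\lm_{lK_\gamma}\FF$. Take $x\in A$ and $W\in\V_{\lambda_{\epsilon+\gamma}}(x)$; I must show $W^{-}\in\SS(\FF)$. Since $\lambda$ is an approach space, its tower satisfies (\ref{eq:diagonaltower}). For pretopology towers this should yield the neighborhood form: for each such $W$ there is $U\in\V_{\lambda_\gamma}(x)$ such that every $y\in U$ has $W\in\V_{\lambda_\epsilon}(y)$. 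This is the standard translation of diagonality into the condition that $\epsilon$-vicinities of points of a $\gamma$-vicinity are contained in an $(\epsilon+\gamma)$-vicinity.

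From $A\in\lm_{lK_\gamma}\FF$ we get $U^{-}\in\FF$. For every $B\in U^{-}$, pick $y\in B\cap U$. Then $W\in\V_{\lambda_\epsilon}(y)$, and $B\in\lm_{lK_\epsilon}\SS(B)$ gives $W^{-}\in\SS(B)$. Hence
\[
W^{-}\in\bigcap_{B\in U^{-}}\SS(B)\subset\SS(\FF),
\]
so $A\in\lm_{lK_{\epsilon+\gamma}}\SS(\FF)$.

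\textbf{Main obstacle.} The hard step is extracting the pointwise neighborhood form of diagonality from (\ref{eq:diagonaltower}). The plan is to argue by contradiction. If no such $U$ exists, then for each $U\in\V_{\lambda_\gamma}(x)$ there is $y_U\in U$ and a filter $\SS(y_U)$ converging to $y_U$ in $\lambda_\epsilon$ with $W\notin\SS(y_U)$; for $y$ not of this form, set $\SS(y)=\{y\}^{\uparrow}$. The contour $\SS(\F)$ along the filter $\F$ generated by the sets $\{y_{U'}:U'\subset U\}$ then converges to $x$ in $\lambda_\gamma$ but fails to contain $W$. Since $\lambda_{\epsilon+\gamma}$ is a pretopology, this contradicts $x\in\lm_{\lambda_{\epsilon+\gamma}}\SS(\F)$. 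Making this selection argument rigorous is where I expect most of the care to be needed.
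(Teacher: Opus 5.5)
Your proposal is correct and follows the paper's own proof. Each level $\lm_{lK_\epsilon}$ is identified, via Corollary \ref{cor:toweradh} and Proposition \ref{prop:pretoplK}, with the lower-Kuratowski pretopology of the pretopology $\lambda_\epsilon$. Diagonality in the sense of (\ref{eq:diagonaltower}) is then checked exactly as you do: using $U^{-}\in\FF$, you place $W^{-}$ in $\bigcap_{B\in U^{-}}\mathfrak{S}(B)\subset\mathfrak{S}(\FF)$.

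The step you flag as the main obstacle needs no contradiction argument. Applying (\ref{eq:diagonaltower}) to $\mathcal{S}(y)=\V_{\lambda_\epsilon}(y)$ and $\F=\V_{\lambda_\gamma}(x)$ gives $\V_{\lambda_\epsilon}(\V_{\lambda_\gamma}(x))\geq\V_{\lambda_{\epsilon+\gamma}}(x)$ at once, which is exactly the neighborhood form the paper uses. Separately, the computation you abandon at the start of Step 1 has the monotonicity of $\adh$ reversed, but it plays no role in the argument.
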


\begin{proof}
In view of Proposition \ref{prop:diagonaltower}, it is enough to
check that $\lim_{lK_{\epsilon}}$ is (pre)topological for every $\epsilon\in[0,\infty]$
and that the tower $\{\lim_{lK_{\epsilon}}:\epsilon\in[0,\infty]\}$
satisfies (\ref{eq:diagonaltower}). To this end, note that $(X,\lambda)$
can be given by a tower $(\tau_{\epsilon})_{\epsilon\in[0,\infty]}$
of pretopologies satisfying (\ref{eq:diagonaltower}). Moreover, in
view of Corollary \ref{cor:toweradh},
\[
A\in\lm_{lK_{\epsilon}}\FF\iff A\subset\{\adh_{\lambda}\rdc(\FF^{\#})\leq\epsilon\}=\adh_{\tau_{\epsilon}}\rdc(\FF^{\#})
\]
and $\lim_{lK_{\epsilon}}$ is the lower-Kuratowski pretopology associated
with the pretopology $\tau_{\epsilon}$ (see Proposition \ref{prop:pretoplK}). 

To see (\ref{eq:diagonaltower}), let $\SS:\C_{c(\lambda)}\to\mathbb{F}(\C_{c(\lambda)})$,
$A\in\C_{c(\lambda)}$ and $\FF\in\mathbb{F}(\C_{c(\lambda)})$ and
suppose that $C\in\lm_{lK_{\epsilon}}\SS(C)$ for every $C\in\C_{c(\lambda)}$
and $A\in\lm_{lK_{\gamma}}\FF$. In view of Proposition \ref{prop:pretoplK},
this means that $\{V^{-}:V\in\V_{\tau_{\epsilon}}(t),t\in C\}\subset\SS(C)$
and $\{W^{-}:W\in\V_{\tau_{\gamma}}(x),x\in A\}\subset\FF$. 

We want to show that $\{U^{-}:U\in\V_{\tau_{\epsilon+\gamma}}(x),x\in A\}\subset\SS(\FF)$.
Since $(\tau_{\epsilon})_{\epsilon\in[0,\infty]}$ is the tower for
an approach space, it satisfies (\ref{eq:diagonaltower}) and thus
\[
\V_{\tau_{\epsilon}}(\V_{\tau_{\gamma}}(x))\geq\V_{\tau_{\epsilon+\gamma}}(x)
\]
for every $x\in A$, so that for every $U\in\V_{\tau_{\epsilon+\gamma}}(x)$
there is $W\in\V_{\tau_{\gamma}}(x)$ with $U\in\bigcap_{t\in W}\V_{\tau_{\epsilon}}(t)$,
hence $U^{-}\in\SS(C)$ whenever $C\in W^{-}$, that is, $U^{-}\in\bigcap_{C\in W^{-}}\SS(C)$.
As $W^{-}\in\FF$, we conclude that $U^{-}\in\bigcup_{\F\in\FF}\bigcap_{C\in\F}\SS(C)=\SS(\FF)$.
\end{proof}
\begin{thm}
\label{thm:convreflcorefl} The $\conv$-coreflection of
\begin{enumerate}
\item \label{enu:lK} $\lambda_{lK}$ is the lower-Kuratowski convergence
$\lm_{lK}$ associated with $c(\lambda)$ provided that $(X,\lambda)$
is a pre-approach space;
\item \label{enu:uK} $\lambda_{uK}$ is the upper-Kuratowski convergence
$\lim_{uK}$ associated with $r(\lambda)$.
\end{enumerate}
Moreover, the $\conv$-reflection of 
\begin{enumerate}
\item $\lambda_{lK}$ is the lower-Kuratowski convergence $\lm_{lK}$ associated
with $r(\lambda)$;
\item $\lambda_{uK}$ is the upper Kuratowski convergence $\lim_{uK}$ associated
with $c(\lambda)$ provided that $(X,\lambda)$ is a pre-approach
space.
\end{enumerate}
Hence, if $(X,\lambda)$ is a topological approach space, $\lambda=i(c(\lambda))=i(r(\lambda))$
and $c(\lambda_{K})=r(\lambda_{K})=\lim_{K}$ is the Kuratowski convergence.
\end{thm}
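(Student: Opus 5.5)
The plan is to unwind, in each of the four cases, the defining formulas (\ref{eq:lambdauK}) and (\ref{eq:lambdalK}) together with (\ref{eq:ccorefl}) and (\ref{eq:rrefl}). Each case then reduces to an identity between adherences, which is supplied by Proposition \ref{prop:adhpreapcorefl} (and Corollary \ref{cor:toweradh} in the pre-approach case). Throughout, the hyperspace is $\C_{c(\lambda)}$, and the classical $\lim_{uK}$, $\lim_{lK}$ are read on it through the same formulas (\ref{eq:uKConv}) and (\ref{eq:lKconv}). These formulas only use the adherence of the relevant base convergence and the fact that $A$ is a set.

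\emph{Coreflections.} For (\ref{enu:lK}), $A\in\lim_{c(\lambda_{lK})}\FF$ means $\lambda_{lK}\FF(A)=0$. Since the value is a supremum over $x\in A$, this says $\adh_{\lambda}\rdc(\FF^{\#})(x)=0$ for every $x\in A$, that is, $A\subset\{\adh_{\lambda}\rdc(\FF^{\#})=0\}$. When $(X,\lambda)$ is a pre-approach space, Proposition \ref{prop:adhpreapcorefl} identifies this set with $\adh_{c(\lambda)}\rdc(\FF^{\#})$, which is exactly (\ref{eq:lKconv}) for $c(\lambda)$. For (\ref{enu:uK}), $\lambda_{uK}\FF(A)=0$ means $1\oslash\adh_{\lambda}(\rdc\FF)(x)=0$, i.e. $\adh_{\lambda}(\rdc\FF)(x)=\infty$, for every $x\notin A$. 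This is $\{\adh_{\lambda}\rdc\FF<\infty\}\subset A$, and by the first identity of Proposition \ref{prop:adhpreapcorefl}, valid in any preconvergence approach space, it reads $\adh_{r(\lambda)}\rdc\FF\subset A$. No pre-approach hypothesis is needed here, matching the statement.

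\emph{Reflections.} Here $A\in\lim_{r(\lambda_{lK})}\FF$ means $\bigvee_{x\in A}\adh_{\lambda}\rdc(\FF^{\#})(x)<\infty$. Dually, $A\in\lim_{r(\lambda_{uK})}\FF$ means $\bigwedge_{x\notin A}\adh_{\lambda}(\rdc\FF)(x)>0$, which by Proposition \ref{prop:Ktowers} says that $\{\adh_{\lambda}\rdc\FF<\delta\}\subset A$ for some $\delta>0$. The targets are, respectively, $A\subset\{\adh_{\lambda}\rdc(\FF^{\#})<\infty\}=\adh_{r(\lambda)}\rdc(\FF^{\#})$ and, in the pre-approach case, $\{\adh_{\lambda}\rdc\FF=0\}=\adh_{c(\lambda)}\rdc\FF\subset A$. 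One inclusion is immediate in each case: a finite supremum forces each value to be finite, and a positive infimum outside $A$ forces the zero set into $A$.

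\emph{Main obstacle.} The converse inclusions require a \emph{uniform} bound from pointwise information. For $\lambda_{lK}$, finiteness at each $x\in A$ must give a finite supremum over $A$. For $\lambda_{uK}$, vanishing of $\adh_{\lambda}\rdc\FF$ only inside $A$ must give a positive lower bound outside $A$. I would first try to exploit that $A$ is $c(\lambda)$-closed, so $A=A^{(0)}$, together with the fact that $\{\adh_{\lambda}\rdc\FF\leq\epsilon\}=\bigcap_{F\in\rdc\FF}F^{(\epsilon)}$. The aim is to show that if the infimum outside $A$ were $0$, a sequence $x_{n}\notin A$ with $\adh_{\lambda}(\rdc\FF)(x_{n})\to0$ would produce (via Proposition \ref{prop:adhpreap} and (\ref{eq:PrAp}) applied to an ultrafilter containing $\{x_{n}\}$) a point of $\{\adh_{\lambda}\rdc\FF=0\}$ outside $A$. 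I expect this to be the delicate point and possibly to need extra hypotheses: the Example following Proposition \ref{prop:adhpreapcorefl} shows that such infima are not automatically attained. If it fails, I would look for a counterexample of the same flavour to calibrate the statement.

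Finally, if $\lambda=i(\xi)$ is topological, all values lie in $\{0,\infty\}$, so every uniformity issue disappears. Moreover $c(\lambda)=r(\lambda)=\xi$, and the four identifications give $c(\lambda_{K})=r(\lambda_{K})=\lim_{uK}\cap\lim_{lK}=\lim_{K}$ by (\ref{eq:Kconv}), because $\lambda_{K}=\lambda_{uK}\vee\lambda_{lK}$ and both $c$ and $r$ commute with finite suprema of $\{0,\infty\}$-valued structures.
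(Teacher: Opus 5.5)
You handle the two coreflections exactly as the paper does, and that part is correct. A supremum of values in $[0,\infty]$ is $0$ exactly when every term is $0$, and an infimum is $\infty$ exactly when every term is $\infty$. There is one detail you share with the paper. The family $\rdc(\FF^{\#})$ is not a filter, so Proposition \ref{prop:adhpreapcorefl} does not literally apply in item (1). The identity $\{\adh_\lambda\A=0\}=\adh_{c(\lambda)}\A$ still holds for every family $\A$ in a pre-approach space, because there $\adh_\lambda\A(x)=\inf\{\gamma:\V_{\lambda_\gamma}(x)\#\A\}$. Moreover, the vicinity filter of $x$ for $c(\lambda)$ is the increasing union of the $\V_{\lambda_\gamma}(x)$, $\gamma>0$.

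For the two reflections, the obstacle you isolate cannot be overcome: those two items are false, already for metric spaces. The paper's proof gets past this point only by asserting two equivalences. It states that $\lambda_{lK}\FF(A)<\infty$ if and only if $A\subset\{\adh_\lambda\rdc(\FF^{\#})<\infty\}$. Citing Proposition \ref{prop:Ktowers}, it states that $\lambda_{uK}\FF(A)<\infty$ if and only if $\{\adh_\lambda\rdc\FF=0\}\subset A$. Only the left-to-right implications hold. Proposition \ref{prop:Ktowers} with finite $\epsilon$ gives exactly your condition $\{\adh_\lambda\rdc\FF<\delta\}\subset A$ for some $\delta>0$, not the zero-set condition.

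A counterexample is $X=\mathbb{R}$ with the approach structure of the Euclidean metric, $\lambda\F(x)=\bigvee_{H\in\F^{\#}}d(x,H)$. Here $c(\lambda)$ is the usual topology and $\adh_\lambda B=d(\cdot,B)$. Let $\FF$ be the principal ultrafilter of $\{0\}$ on $\C_{c(\lambda)}$. Then $\FF^{\#}=\FF$, and both $\rdc\FF$ and $\rdc(\FF^{\#})$ have isotone hull $\{0\}^{\uparrow}$, so both adherence functions equal $|\cdot|$.

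\begin{itemize}
\item Lower case: $\lambda_{lK}\FF(\mathbb{R})=\bigvee_{x}|x|=\infty$, so $\mathbb{R}\notin\lm_{r(\lambda_{lK})}\FF$. Yet $\adh_{r(\lambda)}\{0\}=\{|\cdot|<\infty\}=\mathbb{R}$, so $\mathbb{R}$ is a lower-Kuratowski limit of $\FF$ for $r(\lambda)$.
\item Upper case: $\lambda_{uK}\FF(\{0\})=1\oslash\bigwedge_{x\neq0}|x|=\infty$, so $\{0\}\notin\lm_{r(\lambda_{uK})}\FF$. Yet $\adh_{c(\lambda)}\{0\}=\{0\}$, so this constant filter upper-Kuratowski converges to $\{0\}$ for $c(\lambda)$.
\end{itemize}

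The lower case is not an artifact of $\{0\}\notin\C_{r(\lambda)}$. Take $X=\{p_k\}_{k\geq1}\cup\{q_n\}_{n\geq1}$ with the approach structure whose level-$\epsilon$ pretopology, for finite $\epsilon$, makes $(q_n)$ converge to exactly those $p_k$ with $k\leq\epsilon$ and is otherwise discrete. Let $\FF$ be the filter generated on $\C_{r(\lambda)}$ by the tails $\{\{q_n\}:n\geq j\}$, and let $A=\{p_k:k\geq1\}$, which is $r(\lambda)$-closed. Then $\adh_\lambda\rdc(\FF^{\#})(p_k)=k$, so again $A\subset\adh_{r(\lambda)}\rdc(\FF^{\#})$ while $\lambda_{lK}\FF(A)=\infty$.

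The $\mathbb{R}$ example also shows why your sequence plan for $\lambda_{uK}$ must fail. The points $x_n=1/n\notin\{0\}$ satisfy $\adh_\lambda\rdc\FF(x_n)\to0$, but the point they produce is $0$, which lies in $A$ precisely because $A$ is closed.

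What survives is the half you called immediate:
\begin{itemize}
\item $r(\lambda_{lK})$ is finer than the lower-Kuratowski convergence of $r(\lambda)$;
\item $r(\lambda_{uK})$ is finer than the upper-Kuratowski convergence of $c(\lambda)$, even without the pre-approach hypothesis, since $\adh_{c(\lambda)}\rdc\FF\subset\{\adh_\lambda\rdc\FF=0\}$ always;
\item both become equalities when $\lambda$ is $\{0,\infty\}$-valued.
\end{itemize}
Your last paragraph is correct, and it needs only the coreflection items. For $\lambda=i(\xi)$ with $\xi$ topological, $\lambda_{uK}$ and $\lambda_{lK}$ take only the values $0$ and $\infty$. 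Hence $c$ and $r$ coincide on them and on $\lambda_K=\lambda_{uK}\vee\lambda_{lK}$.
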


\begin{proof}
(\ref{enu:lK}). If $(X,\lambda)$ is a pre-approach space, then in
view of Proposition \ref{prop:adhpreapcorefl}, $\lambda_{lK}\FF(A)=0$
if and only if 
\[
A\subset\{\adh_{\lambda}\rdc(\FF^{\#})=0\}=\adh_{c(\lambda)}\rdc(\FF)^{\#},
\]
so that $\lambda_{lK}\FF(A)=0$ if and only if $A\in\lm_{lK}\FF$
for $c(\lambda)$.

(\ref{enu:uK}). In view of Proposition \ref{prop:Ktowers}, $\lambda_{uK}\FF(A)=0$
if and only $\{\adh_{\lambda}(\rdc\FF)<\infty\}\subset A$, if and
only if $\adh_{r(\lambda)}(\rdc\FF)\subset A$, equivalently, $A\in\lm_{uK}\FF$
for the convergence space $(X,r(\lambda))$. 

On the other hand, $\lambda_{lK}\FF(A)<\infty$ if and only if 
\[
A\subset\{\adh_{\lambda}\rdc(\FF^{\#})<\infty\}=\adh_{r(\lambda)}\rdc(\FF)^{\#}
\]
 and thus $\lambda_{lK}\FF(A)<\infty$ if and only if $A\in\lm_{lK}\FF$
for $(X,r(\lambda))$.

Finally, suppose that $(X,\lambda)$ is pre-approach. By Proposition
\ref{prop:Ktowers}, $\lambda_{uK}\FF(A)<\infty$ if and only if $\{\adh_{\lambda}(\rdc\FF)=0\}\subset A$.
By Proposition \ref{prop:adhpreapcorefl}, $\{\adh_{\lambda}(\rdc\FF)=0\}=\adh_{c(\lambda)}\rdc\FF$.
Hence $\lambda_{uK}\FF(A)<\infty$ if and only if $A\in\lm_{uK}\FF$
for $(X,c(\lambda))$.
\end{proof}
\begin{lem}
If $(X,\lambda)$ is an approach space, $\A\subset\C_{c(\lambda)}$,
and the family $(\FF_{A})_{A\in\A}$ of filters on $\C_{c(\lambda)}$
is directed, then
\[
\lambda_{uK}(\bigvee_{A\in\A}\FF_{A})(\bigcap\A)\leq\bigvee_{A\in\A}\lambda_{uK}(\FF_{A})(A).
\]
\end{lem}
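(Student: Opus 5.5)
The plan is to reduce the inequality to two facts already available: the formula (\ref{eq:lambdauKofintersection}) for $\lambda_{uK}$ at an intersection, and the monotonicity of $\lambda_{uK}$ in the filter variable established in the proposition showing that $\lambda_{uK}$ is a preconvergence approach structure. If this works, the approach-space hypothesis is not really needed beyond making the objects well defined.

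First, I will check that both sides make sense.
\begin{itemize}
\item Since the family $(\FF_{A})_{A\in\A}$ is directed, its supremum $\FF:=\bigvee_{A\in\A}\FF_{A}$ is the union $\bigcup_{A\in\A}\FF_{A}$. This union is closed under finite intersections by directedness and does not contain $\emptyset$, so $\FF$ is a proper filter on $\C_{c(\lambda)}$.
\item The set $\bigcap\A$ is $c(\lambda)$-closed, because closed sets of any convergence are stable under arbitrary intersections: if $\bigcap\A\in\G$ then $\lm\G\subset A$ for every $A\in\A$. Hence $\lambda_{uK}\FF(\bigcap\A)$ is defined.
\item If $\A=\emptyset$, the convention $\bigcap\A=X$ gives $\lambda_{uK}\FF(X)=0$, so that case is trivial.
\end{itemize}

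Next, I apply (\ref{eq:lambdauKofintersection}) with the filter $\FF$, which gives
\[
\lambda_{uK}\FF(\bigcap\A)=\bigvee_{B\in\A}\lambda_{uK}\FF(B).
\]
Fix $B\in\A$. Since $\FF\geq\FF_{B}$, the monotonicity part of the proof that $\lambda_{uK}$ is a preconvergence approach structure yields $\lambda_{uK}\FF(B)\leq\lambda_{uK}\FF_{B}(B)$. Concretely, $\rdc\FF\geq\rdc\FF_{B}$, so $\adh_{\lambda}\rdc\FF\geq\adh_{\lambda}\rdc\FF_{B}$, and $1\oslash\cdot$ reverses this inequality.

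Taking the supremum over $B\in\A$ then gives the claim. I expect no serious obstacle: the only point requiring care is the properness of the directed supremum and the closedness of $\bigcap\A$, both handled above.
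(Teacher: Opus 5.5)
Your argument is correct. You apply (\ref{eq:lambdauKofintersection}) at the filter $\bigvee_{A\in\A}\FF_{A}$ and then use that $\lambda_{uK}$ is antitone in the filter variable, which gives the inequality at once; as you suspected, the approach-space hypothesis is never used, and directedness only ensures that the supremum is a proper filter. The paper states this lemma without a separate proof, but this is exactly the reasoning it uses inline in the proof of Theorem \ref{thm:weakdiagwhenuKpreapp}, so your route is the intended one.
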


\begin{thm}
\label{thm:weakdiagwhenuKpreapp} Let $(X,\lambda)$ be an approach
space. If $\lambda_{uK}$ is a pre-approach on $\C_{r(\lambda)}$,
then for every $\FF\in\mathbb{F}(\C_{r(\lambda)})$, $\adh_{r(\lambda)}\rdc\FF$
is a point of $\FF$-diagonality of $\lambda_{uK}$.
\end{thm}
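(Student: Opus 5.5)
The plan is to verify inequality (\ref{eq:diagfilters}) for $\lambda_{uK}$ at the point $A_{0}:=\adh_{r(\lambda)}\rdc\FF$, for the fixed filter $\FF$ and an arbitrary $\G:\C_{r(\lambda)}\to\mathbb{F}(\C_{r(\lambda)})$. I will first settle two preliminary facts.

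First, $A_{0}\in\C_{r(\lambda)}$. Indeed, $r(\lambda)$ is topological because $(X,\lambda)$ is an approach space, so $\adh_{r(\lambda)}\rdc\FF=\bigcap_{\F\in\FF}\cl_{r(\lambda)}\rdc\F$ is closed.

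Second, $\lambda_{uK}\FF(A_{0})=0$. Proposition \ref{prop:Ktowers} with $\epsilon=0$ gives $\{\adh_{\lambda}\rdc\FF<\infty\}\subset A_{0}$, and Proposition \ref{prop:adhpreapcorefl} identifies the left-hand side with $\adh_{r(\lambda)}\rdc\FF=A_{0}$.

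Consequently the inequality to prove reduces to $\lambda_{uK}(\G(\FF))(A_{0})\leq\epsilon:=\bigvee_{C}\lambda_{uK}(\G(C))(C)$, and we may assume $\epsilon<\infty$.

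Since $\lambda_{uK}$ is assumed pre-approach, I will follow the second half of the proof of Proposition \ref{prop:twodiagonalaxioms} and write
\[
\lambda_{uK}(\G(\FF))(A_{0})=\bigvee_{\H\in\G(\FF)^{\#}}\adh_{\lambda_{uK}}\H(A_{0}).
\]
Every $\H\in\G(\FF)^{\#}=\G^{\#}(\FF^{\#})$ contains a union $\bigcup_{C\in\K}\S_{C}$, where $\K\in\FF^{\#}$ and $\S_{C}\in\G(C)^{\#}$ for each $C\in\K$. For each such $C$, the bound $\lambda_{uK}(\G(C))(C)\leq\epsilon$ gives $\adh_{\lambda_{uK}}\S_{C}(C)\leq\epsilon$. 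Hence
\[
\K\subset\B^{(\epsilon)},\qquad\B:=\bigcup_{C\in\K}\S_{C},
\]
with the $(\epsilon)$-enlargement taken in $\lambda_{uK}$. Moreover $\K\#\FF$ and $\lambda_{uK}\FF(A_{0})=0$, so $\adh_{\lambda_{uK}}\K(A_{0})=0$. Therefore everything reduces to the following local adherence-diagonality statement at $A_{0}$:
\[
\adh_{\lambda_{uK}}\K(A_{0})=0\ \text{and}\ \K\subset\B^{(\epsilon)}\then\adh_{\lambda_{uK}}\B(A_{0})\leq\epsilon .
\]

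To prove this statement I will unwind it through Proposition \ref{prop:Ktowers}. Its conclusion means that there is an ultrafilter $\UU\ni\B$ with $\{\adh_{\lambda}\rdc\UU<1\oslash\epsilon\}\subset A_{0}$. Fix $x\notin A_{0}$. Since $A_{0}$ is the closure-type adherence in the topology $r(\lambda)$, there is $\F_{x}\in\FF$ such that $x\notin\cl_{r(\lambda)}\rdc\F_{x}$, that is, $\adh_{\lambda}(\rdc\F_{x})(x)=\infty$ by (\ref{eq:closed}). Because $\K\#\FF$, there is some $C\in\K\cap\F_{x}$, and then $x\notin C$. For this $C$, the estimate $\adh_{\lambda_{uK}}\S_{C}(C)\leq\epsilon$ yields an ultrafilter $\UU_{C}\ni\S_{C}$ with $\adh_{\lambda}\rdc\UU_{C}(x)\geq1\oslash\epsilon$. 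I then want to glue these choices into a single ultrafilter $\UU$ on $\B$ that works simultaneously for all $x\notin A_{0}$. The natural candidate is a contour $\UU_{\cdot}(\W)$ with $\W$ an ultrafilter on $\K$ finer than $\FF$ (possible since $\K\#\FF$). The needed lower bound on $\adh_{\lambda}\rdc\UU$ off $A_{0}$ should then come from $\rdc\W\geq\rdc\FF$ together with $\lambda_{uK}\FF(A_{0})=0$ and the identity (\ref{eq:lambdauKofintersection}).

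The main obstacle is this gluing step. The quantity $\adh_{\lambda}$ of a reduced filter is a supremum over its members, so passing from the individual bounds $\adh_{\lambda}\rdc\UU_{C}(x)\geq1\oslash\epsilon$ to a bound on $\adh_{\lambda}\rdc\UU$ requires that the contour not collapse the reductions. I would try to control this using saturation of the filters involved (Lemma \ref{lem:grillrdc}), together with the pre-approach formula (\ref{eq:adhPRAP}) applied in $X$.
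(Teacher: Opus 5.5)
Your set-up is sound. $A_{0}=\adh_{r(\lambda)}\rdc\FF=\bigcap_{\A\in\FF}\cl_{r(\lambda)}(\bigcup\A)$ is $r(\lambda)$-closed, $\lambda_{uK}\FF(A_{0})=0$, and the passage through $\G(\FF)^{\#}=\G^{\#}(\FF^{\#})$ correctly reduces the theorem to $\adh_{\lambda_{uK}}\mathcal{B}(A_{0})\leq\epsilon$. However, there is a genuine gap: you leave the gluing step open, and the route you sketch cannot close it. Take an ultrafilter $\W\geq\FF$ containing $\mathcal{K}$, and ultrafilters $\UU_{C}\ni\mathcal{S}_{C}$ with $\lambda_{uK}\UU_{C}(C)\leq\epsilon$. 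The bound you then need, $\lambda_{uK}(\UU_{\cdot}(\W))(A_{0})\leq\epsilon$, is an instance of $\W$-diagonality at $A_{0}$ (note $\lambda_{uK}\W(A_{0})=0$ since $\W\geq\FF$), i.e.\ essentially the statement being proved with $\W$ in place of $\FF$, so the argument is circular. Saturation (Lemma \ref{lem:grillrdc}) and (\ref{eq:adhPRAP}) in $X$ do not help either. The reduction of the contour consists of infinite unions $\bigcup_{C\in\mathcal{M}}\rdc\mathcal{V}_{C}$ with $\mathcal{M}\in\W$, and the separate bounds $\adh_{\lambda}\rdc\UU_{C}(x)\geq1\oslash\epsilon$ say nothing about the adherence of such a union at $x$. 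Those tools are available in every approach space. What controls the infinite union is the hypothesis that $\lambda_{uK}$ itself is pre-approach, applied to infinite intersections of filters on $\C_{r(\lambda)}$; your plan uses that hypothesis only to rewrite limits through adherences of sets.

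The missing idea is to evaluate such intersections at the enlarged points $\cl_{r(\lambda)}(\bigcup\A)$, $\A\in\FF$, and only afterwards shrink to $A_{0}$. For $C\in\A$ we have $C\subset\cl_{r(\lambda)}(\bigcup\A)$, so Lemma \ref{lem:uKantitoneonlimitpoints} and the pre-approach property of $\lambda_{uK}$ give
\[
\lambda_{uK}\Bigl(\bigcap_{C\in\A}\G(C)\Bigr)\Bigl(\cl_{r(\lambda)}(\bigcup\A)\Bigr)=\bigvee_{C\in\A}\lambda_{uK}(\G(C))\Bigl(\cl_{r(\lambda)}(\bigcup\A)\Bigr)\leq\bigvee_{C\in\A}\lambda_{uK}(\G(C))(C)\leq\epsilon .
\]
Since $\G(\FF)\geq\bigcap_{C\in\A}\G(C)$ for each $\A\in\FF$, (\ref{eq:lambdauKofintersection}) yields $\lambda_{uK}(\G(\FF))(A_{0})=\bigvee_{\A\in\FF}\lambda_{uK}(\G(\FF))(\cl_{r(\lambda)}(\bigcup\A))\leq\epsilon$. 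This is the paper's proof, and it makes your grill/adherence reduction unnecessary. If you want to keep that reduction, apply the same two steps to the directed filters $\bigcap_{C\in\mathcal{K}\cap\A}\UU_{C}$ ($\A\in\FF$), where $\UU_{C}$ is an ultrafilter finer than $\G(C)$ containing $\mathcal{S}_{C}$. All of these filters contain $\mathcal{B}$, and the argument gives $\adh_{\lambda_{uK}}\mathcal{B}(A_{0})\leq\epsilon$.
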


\begin{proof}
Suppose $\lambda_{uK}$ is pre-approach and let $A\in\C_{r(\lambda)}$
and $\FF\in\mathbb{F}\C_{r(\lambda)}$. We want to show that if $\GG:\C_{r(\lambda)}\to\mathbb{F}\C_{r(\lambda)}$
then
\[
\lambda_{uK}(\GG(\FF))(A)\leq\lambda\FF(A)+\bigvee_{C\in\C_{r(\lambda)}}\lambda_{uK}(\GG(C))(C).
\]

For every $\A\in\FF$ and every $C\in\A$, $\lambda_{uK}(\GG(C))(\cl_{r(\lambda)}(\bigcup_{C\in\A}C))\leq\lambda_{uK}(\GG(C))(C)$
so that, because $\lambda_{uK}$ is pre-approach,
\[
\lambda_{uK}(\bigcap_{C\in\A}\GG(C))(\cl_{r(\lambda)}(\bigcup_{C\in\A}C))=\bigvee_{C\in\A}\lambda_{uK}(\GG(C))(\cl_{r(\lambda)}(\bigcup_{C\in\A}C))\leq\bigvee_{C\in\A}\lambda_{uK}(\GG(C))(C).
\]

In view of (\ref{eq:lambdauKofintersection}),
\begin{eqnarray*}
\lambda_{uK}(\bigvee_{\A\in\FF}\bigcap_{C\in\A}\GG(C))(\bigcap_{\A\in\FF}\cl_{r(\lambda)}(\bigcup_{C\in\A}C)) & \leq & \lambda_{uK}(\bigcap_{C\in\A}\GG(C))(\bigcap_{\A\in\FF}\cl_{r(\lambda)}(\bigcup_{C\in\A}C))\\
 & \leq & =\bigvee_{\A\in\FF}\bigvee_{C\in\A}\lambda_{uK}(\GG(C))(C),
\end{eqnarray*}
that is,
\begin{eqnarray*}
\lambda_{uK}(\GG(\FF))(\adh_{r(\lambda)}\rdc\FF) & \leq & \bigvee_{C\in\C_{r(\lambda)}}\lambda_{uK}(\GG(C))(C)\\
 & \leq & \lambda_{uK}\FF(\adh_{r(\lambda)}\rdc\FF)+\bigvee_{C\in\C_{r(\lambda)}}\lambda_{uK}(\GG(C))(C),
\end{eqnarray*}
because $\lambda\FF(\adh_{r(\lambda)}\rdc\FF)=0$.
\end{proof}
\begin{cor}
If $X$ is a topological space and the upper Kuratowski convergence
on $\C_{X}$ is pretopological, then it is topological.
\end{cor}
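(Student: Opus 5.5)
Let $(X,\xi)$ be a topological space and regard it as the approach space $\lambda=i(\xi)=\lambda_{\xi}$. Then $c(\lambda)=r(\lambda)=\xi$, and $\C_{c(\lambda)}=\C_{r(\lambda)}=\C_{X}$. Since $\lambda$ only takes the values $0$ and $\infty$, so does $\adh_{\lambda}\rdc\FF$, namely $\theta_{\adh_{\xi}\rdc\FF}$. Hence $1\oslash\adh_{\lambda}(\rdc\FF)(x)$ is $\infty$ if $x\in\adh_{\xi}\rdc\FF$ and $0$ otherwise. It follows that $\lambda_{uK}\FF(A)=0$ if $A\in\lm_{uK}\FF$ and $\infty$ otherwise. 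In other words, $\lambda_{uK}=i(\lm_{uK})$, which is also consistent with Theorem \ref{thm:convreflcorefl}. Under this identification, a pretopology on $\C_{X}$ corresponds exactly to a pre-approach structure, since (\ref{eq:preAP}) for a $\{0,\infty\}$-valued structure is just (\ref{eq:pretopdeep}). So the hypothesis says that $\lambda_{uK}$ is a pre-approach structure on $\C_{r(\lambda)}$, and Theorem \ref{thm:weakdiagwhenuKpreapp} applies.

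Theorem \ref{thm:weakdiagwhenuKpreapp} gives, for every $\FF\in\mathbb{F}\C_{X}$, that $A_{\FF}:=\adh_{\xi}\rdc\FF$ is a point of $\FF$-diagonality of $\lambda_{uK}$. For $\{0,\infty\}$-valued structures this means the following. If $\GG:\C_{X}\to\mathbb{F}\C_{X}$ satisfies $C\in\lm_{uK}\GG(C)$ for all $C$, and $A_{\FF}\in\lm_{uK}\FF$ (which always holds), then $A_{\FF}\in\lm_{uK}\GG(\FF)$. By Lemma \ref{lem:uKantitoneonlimitpoints}, $\lm_{uK}\GG(\FF)$ is upward closed in $\C_{X}$. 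Also, every $A\in\lm_{uK}\FF$ contains $A_{\FF}$. Therefore $A\in\lm_{uK}\FF$ implies $A\in\lm_{uK}\GG(\FF)$, so the upper Kuratowski convergence is diagonal.

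It remains to conclude that a diagonal pretopology is a topology. A pretopology $\sigma$ is topological exactly when its vicinity filters satisfy $\V(A)\le\V(\V(A))$, i.e.\ every vicinity contains a vicinity that is a vicinity of each of its points. To see this, take $\GG(C)=\V_{\sigma}(C)$ and $\FF=\V_{\sigma}(A)$. Diagonality gives $A\in\lm_{\sigma}\V(\V(A))$, hence $\V(\V(A))\ge\V(A)$. This is the standard characterization of topologicity among pretopologies, since it makes the adherence idempotent. Equivalently, one can argue through the towers: $i$ of a diagonal pretopology satisfies (\ref{eq:diagonaltower}), so it lies in $\ap$, and $\ap\cap\conv=\top$.

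I expect the main obstacle to be the middle step, namely extracting full diagonality from diagonality at the single point $A_{\FF}$. The key point is that the monotonicity in Lemma \ref{lem:uKantitoneonlimitpoints} upgrades diagonality at the least limit point $A_{\FF}$ to diagonality at all limit points. One must also check that $\lambda_{uK}\FF(A_{\FF})=0$, which holds because $A_{\FF}$ is closed and equals $\adh_{\xi}\rdc\FF$.
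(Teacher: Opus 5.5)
Your proposal is correct and follows the paper's proof essentially step by step. You identify $\lambda_{uK}$ with $i(\lm_{uK})$ so that pretopologicity becomes the pre-approach property, apply Theorem \ref{thm:weakdiagwhenuKpreapp} at the least limit point $\adh_{\xi}\rdc\FF$, and use the upward closedness of $\lm_{uK}\GG(\FF)$ (Lemma \ref{lem:uKantitoneonlimitpoints}) to upgrade this to full diagonality. The only addition is that you explicitly justify why a diagonal pretopology is topological, via $\V(\V(A))\geq\V(A)$ or via $\ap\cap\conv=\top$; the paper uses this standard fact without comment.
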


\begin{proof}
If $(X,\xi)$ is topological, $(X,\lambda_{\xi})$ is a (topological)
approach space and the corresponding $\lambda_{uK}$ is given by
\[
\lambda_{uK}\FF(A)=\bigvee_{x\notin A}1\oslash\adh_{\lambda}(\rdc\FF)(x)=\begin{cases}
\infty & \adh_{\xi}\rdc\FF\cap A^{c}\neq\emptyset\\
0 & \adh_{\xi}\rdc\FF\subset A
\end{cases}
\]
and thus coincides with $\lambda_{\lim_{uK}}$, which is pre-approach
because $\lim_{uK}$ is pretopological. By Theorem \ref{thm:weakdiagwhenuKpreapp},
for every $\FF\in\mathbb{F}(\C_{r(\lambda)})$, $\adh_{r(\lambda)}\rdc\FF=\adh_{\xi}\rdc\FF$
is a point of $\FF$-diagonality, that is, $\adh_{\xi}\rdc\FF\in\lm_{uK}\GG(\FF)$
for every $\GG:\C_{\xi}\to\mathbb{F}\C_{\xi}$ with $A\in\lim_{uK}\GG(A)$.
As a result, $C\in\lm_{uK}\GG(\FF)$ whenever $C\in\lm_{uK}\FF$
and thus $\lim_{uK}$ is topological. 
\end{proof}
This result, though often formulated in different terms, can be traced
back to \cite{DayKelly} and is well-known and re-appears under different
forms in the literature, e.g., \cite{schwarz.powers,compedium}, \cite[Theorem 5.6]{DM.products}
though usually through indirect arguments; a direct argument is provided
for instance in \cite[Proposition 5.3]{JM.corecompact}.

\section{Cocompact and Fell}

Recall from Section \ref{subsec:Hyperspace-convergences-and} that
the cocompact topology or upper-Fell topology on the set $\C_{X}$
of closed subsets of a topological space is generated by the subbase
given by sets of the form
\[
K^{+}=\{A\in\C_{X}:A\cap K=\emptyset\},
\]
where $K$ runs over compact subsets of $X$, so that $A\in\lim_{uF}\FF$
if 
\[
A\in K^{+}\then K^{+}\in\FF,
\]
that is, if 
\[
A\cap K=\emptyset\then K\notin(\rdc\FF)^{\#},
\]

for every compact subset $K$ of $X$, equivalently,
\[
K\subset A^{c},K\#\rdc\F\then K\text{ not compact.}
\]
In other words, for such $K$'s, the closer to compact, the worse
the convergence. On this basis, we define the \emph{upper-Fell convergence
approach structure} on $\C_{c(\lambda)}$ or on its subset $\C_{r(\lambda)}$
by
\begin{equation}
\lambda_{uF}\FF(A)=\bigvee_{H\in(\rdc\FF)^{\#},H\subset A^{c}}1\oslash m(H),\label{eq:lambdauF}
\end{equation}
where 
\[
m(H)=\bigvee_{\U\in\beta H}\bigwedge_{x\in H}\lambda\U(x)=\bigvee_{H\in\H^{\#}}\bigwedge_{x\in H}\adh_{\lambda}\H(x)
\]
 is the measure of compactness of $H$ in $(X,\lambda)$. 
\begin{thm}
\label{thm:lambdauFpreAP} If $(X,\lambda)$ is a convergence approach
space then $\lambda_{uF}$ is a non-Archimedean approach structure
on $\C_{X}$ satisfying $\lambda_{uK}\geq\lambda_{uF}$.
\end{thm}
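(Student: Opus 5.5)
The plan is to pass to the tower description and exploit the fact that $\lambda_{uF}$ is built from "basic" conditions, one for each set $H\subset X$. For $\epsilon\in[0,\infty]$, $A\in\lm_{uF_{\epsilon}}\FF$ holds if and only if $1\oslash m(H)\leq\epsilon$, that is, $m(H)\geq1\oslash\epsilon$, for every $H\subset A^{c}$ with $H\in(\rdc\FF)^{\#}$. Put $\K_{\epsilon}=\{H\subset X:m(H)<1\oslash\epsilon\}$, the sets that are ``$\epsilon$-compact''. By (\ref{eq:BingrillofrdcF}), the condition $H\in(\rdc\FF)^{\#}$ is equivalent to $H^{-}\in\FF^{\#}$, so $A\in\lm_{uF_{\epsilon}}\FF$ if and only if
\[
A\cap H=\emptyset,\ H\in\K_{\epsilon}\then H^{+}:=\{C:C\cap H=\emptyset\}\in\FF .
\]
Hence $\lm_{uF_{\epsilon}}$ is exactly the convergence of the topology $\tau_{\epsilon}$ on $\C_{c(\lambda)}$ (or on $\C_{X}$) generated by the subbase $\{H^{+}:H\in\K_{\epsilon}\}$. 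Centeredness and monotonicity are then immediate. A tower of topologies automatically gives a pre-approach structure, by the equivalence between pretopology towers and $\prap$.

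I would first check the tower axioms. Since $\K_{\gamma}\subset\K_{\epsilon}$ for $\epsilon\leq\gamma$, the map $\epsilon\mapsto\tau_{\epsilon}$ is decreasing. We have $\K_{\infty}=\emptyset$ because $1\oslash\infty=0$, so $\tau_{\infty}$ is antidiscrete. Left continuity, $\tau_{\epsilon}=\bigvee_{\gamma>\epsilon}\tau_{\gamma}$, follows from the strict inequality in the definition of $\K_{\epsilon}$: the condition $m(H)<1\oslash\epsilon$ means $m(H)<1\oslash\gamma$ for some $\gamma>\epsilon$.

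For the non-Archimedean diagonal inequality (\ref{eq:nonarchimedean}), I would argue at the level of towers. A tower of topologies $(\tau_{\epsilon})$ with $\tau_{\epsilon}$ decreasing in $\epsilon$ satisfies the max-version of (\ref{eq:diagonaltower}): if $C\in\lm_{\tau_{\epsilon}}\SS(C)$ for all $C$ and $A\in\lm_{\tau_{\gamma}}\FF$, then with $\delta=\epsilon\vee\gamma$ both hypotheses hold in the single topology $\tau_{\delta}$. Topological convergences are diagonal, so $A\in\lm_{\tau_{\delta}}\SS(\FF)$. I would translate this into (\ref{eq:nonarchimedean}) by setting $\epsilon$ equal to $\bigvee_{C}\lambda_{uF}(\GG(C))(C)$ and $\gamma=\lambda_{uF}\FF(A)$, using the tower-to-$\Cap$ correspondence (\ref{eq:towertoCAP}) together with left continuity to obtain the infimum. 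Since a non-Archimedean pre-approach space is an approach space, this completes the structural claim.

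Finally, for $\lambda_{uK}\geq\lambda_{uF}$ it suffices to show, for each $H\in(\rdc\FF)^{\#}$ with $H\subset A^{c}$, that $1\oslash m(H)\leq\lambda_{uK}\FF(A)$. Because $\lambda_{uK}\FF(A)=1\oslash\bigwedge_{x\notin A}\adh_{\lambda}(\rdc\FF)(x)$ and $1\oslash$ is order reversing, this amounts to finding $x\in A^{c}$ with $\adh_{\lambda}(\rdc\FF)(x)\leq m(H)$. Take an ultrafilter $\U\geq\rdc\FF\vee\{H\}^{\uparrow}$, which exists since $H$ meshes $\rdc\FF$. From the definition of $m$, $\bigwedge_{x\in H}\lambda\U(x)\leq m(H)$. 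I expect the main obstacle here: when the infimum is not attained, one only gets points $x\in H\subset A^{c}$ with $\lambda\U(x)<m(H)+\eta$ for each $\eta>0$. Letting $\eta\to0$ inside the infimum over $x\notin A$ still gives $\bigwedge_{x\notin A}\adh(\rdc\FF)(x)\leq m(H)$, since $\U\#\rdc\FF$, and this is all that is needed. The remaining care is in the degenerate cases $m(H)\in\{0,\infty\}$, which are handled by the conventions of $\oslash$.
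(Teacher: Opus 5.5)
Your proof is correct, and for the structural part it takes a genuinely different route from the paper. The paper works directly with (\ref{eq:lambdauF}):
\begin{itemize}
\item the pre-approach property comes from $\rdc(\bigcap_{\mathfrak{D}\in\mathbb{D}}\mathfrak{D})=\bigcap_{\mathfrak{D}\in\mathbb{D}}\rdc\mathfrak{D}$, so the grill of the reduction of an intersection is the union of the grills and the supremum splits;
\item (\ref{eq:nonarchimedean}) comes from the grill-of-a-contour formula $(\rdc(\GG(\FF)))^{\#}=(\rdc\GG)^{\#}(\FF^{\#})$. This gives $\H\in\FF^{\#}$ with $H$ meshing $\rdc\GG(C)$ for all $C\in\H$. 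A case split then bounds $1\oslash m(H)$: if some $C\in\H$ misses $H$, it is at most $\lambda_{uF}(\GG(C))(C)$; otherwise $H\in(\rdc\FF)^{\#}$ and it is at most $\lambda_{uF}\FF(A)$.
\end{itemize}

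You instead identify each level of $\lambda_{uF}$ as the topology generated by the sets $H^{+}$ with $m(H)<1\oslash\epsilon$, an $\epsilon$-version of the cocompact topology. Both the pre-approach property and the max-diagonal inequality are then inherited from two facts: topologies are diagonal pretopologies, and the levels decrease in $\epsilon$.

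Your route yields more than the paper's.
\begin{itemize}
\item Every level of $\lambda_{uF}$ is topological; for pre-approach spaces this is in fact equivalent to (\ref{eq:nonarchimedean}).
\item It explains why, over a topological base, all finite levels reduce to the classical upper Fell topology: there $m(H)\in\{0,\infty\}$, with $m(H)=0$ exactly when $H$ is compact.
\end{itemize}
The cost is reliance on two things:
\begin{itemize}
\item the tower correspondence with $\prap$, or the one-line levelwise check of (\ref{eq:preAP});
\item (\ref{eq:BingrillofrdcF}) for arbitrary $H\subset X$. Lemma \ref{lem:grillrdc} states it only for closed $B$ over a topological space, but its proof uses neither assumption, so this is harmless.
\end{itemize}
The paper's computation, by contrast, stays entirely within the grill calculus.

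Two of your steps can be dropped.
\begin{itemize}
\item Verifying the tower axioms, and appealing to left continuity and (\ref{eq:towertoCAP}), is unnecessary. Your levels are defined from $\lambda_{uF}$ via (\ref{eq:CAPtotower}), and your description of $\{\lambda_{uF}\FF(A)\leq\epsilon\}$ is exact, so $\lambda_{uF}(\GG(\FF))(A)\leq\epsilon\vee\gamma$ follows at once.
\item In the comparison $\lambda_{uK}\geq\lambda_{uF}$, which is the paper's argument, there is no attainment issue. The chain $\bigwedge_{x\notin A}\adh_{\lambda}(\rdc\FF)(x)\leq\bigwedge_{x\in H}\lambda\U(x)\leq m(H)$ holds directly.
\end{itemize}
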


\begin{proof}
First note that $\lambda_{uF}$ is centered for $(\rdc\{A\}^{\uparrow_{\C_{X}}})^{\uparrow_{X}}=A^{\uparrow_{X}}$
so that $H\subset A^{c}$ and $H\in(\rdc\{A\}^{\uparrow_{\C_{X}}})^{\#}$
are incompatible, so that $\lambda_{uF}(\{A\}^{\uparrow_{\C_{X}}})(A)=0$.
Since 
\global\long\def\DD{\mathfrak{D}}%
\[
\rdc(\bigcap_{\DD\in\mathbb{D}}\DD)=\bigcap_{\DD\in\mathbb{D}}\rdc\DD,
\]
we have $\left(\rdc(\bigcap_{\DD\in\mathbb{D}}\DD)\right)^{\#}=\bigcup_{\DD\in\mathbb{D}}(\rdc\DD)^{\#}$
and thus $\lambda_{uF}(\bigcap_{\DD\in\mathbb{D}}\DD)(A)=\bigvee\lambda_{uF}(\DD)(A)$,
that is, $\lambda_{uF}$ is a pre-approach structure. 

To see that $\lambda_{uF}$ is non-Archimedean approach structure,
we show (\ref{eq:nonarchimedean}) for $\lambda_{uF}$. Let $\GG(\cdot):\C_{X}\to\mathbb{F}(\C_{X})$,
$A_{0}\in\C_{X}$ and $\FF\in\mathbb{F}(\C_{X})$. We need to show
that $1\oslash m(H)\leq\lambda_{uF}\FF(A_{0})\vee\bigvee_{C\in\C_{X}}\lambda_{uF}(\GG(C))(C)$
whenever $H\subset A_{0}^{c}$ and $H\in(\rdc(\GG(\FF))^{\#}$. By
\cite[Prop. VI.1.4]{DM.book},
\[
(\rdc(\GG(\FF))^{\#}=((\rdc\GG)(\FF))^{\#}=(\rdc\GG)^{\#}(\FF^{\#}),
\]
so that there is $\H\in\FF^{\#}$ such that $H\in\bigcap_{C\in\H}(\rdc\GG(C))^{\#}$.
If there is $C\in\H$ disjoint from $H$, then $1\oslash m(H)\leq\lambda_{uF}(\GG(C))(C)\leq\bigvee_{C\in\C_{X}}\lambda_{uF}(\GG(C))(C)$.
Else, $H\cap C\neq\emptyset$ whenever $C\in\H$ and $\H\in\FF^{\#}$
so that $H\in(\rdc\FF)^{\#}$. Thus $1\oslash m(H)\leq\lambda_{uF}(\FF)(A_{0})$
and the proof that $\lambda_{uF}$ is non-Archimedean is complete.

Finally, $\lambda_{uK}\geq\lambda_{uF}$, because if $H\in(\rdc\F)^{\#},H\subset A^{c}$,
then $\bigwedge_{x\notin A}\adh_{\lambda}(\rdc\FF)(x)\leq\bigwedge_{x\in H}\adh_{\lambda}(\rdc\FF)(x)\leq m(H)$
so $1\oslash m(H)\leq1\oslash\bigwedge_{x\notin A}\adh_{\lambda}(\rdc\FF)(x)$.
\end{proof}
On the other hand the upper and lower Vietoris approach structures
were introduced by Lowen and his collaborators. We follow here the
presentation from \cite{indextheory}. We start from an approach space
$X$ given by its lower regular function frame $\L=\cont(X,[0,\infty])$.
Note that $\mu\in\cont(X,[0,\infty])$ lifts to maps
\begin{eqnarray*}
\mu^{\wedge}:\C_{cX} & \to & [0,\infty]\\
A & \mapsto & \bigwedge\mu(A)
\end{eqnarray*}
and 
\begin{eqnarray*}
\mu^{\vee}:\C_{cX} & \to & [0,\infty]\\
A & \mapsto & \bigvee\mu(A).
\end{eqnarray*}

Such maps generate lower regular function frames on $\C_{cX}$ given
by
\[
\L^{\wedge}=\{\sup_{\mu\in J}\mu^{\wedge}:J\subset\L\}
\]
and 
\[
\L^{\vee}=\{\sup_{j\in J}\inf_{k\in K_{j}}\mu_{j,k}^{\vee}:K_{j}\text{ finite},\mu_{j,k}\in\L\}.
\]

They define \emph{the Vietoris $\wedge$-approach structure }and\emph{
Vietoris $\vee$-approach structure }on $\C_{cX}$, which coincide
with the upper and lower Vietoris topologies on $\C_{cX}$ respectively,
when $X$ is topological \cite[Prop. 10.3.2]{indextheory}.

The corresponding limit operators are 
\begin{eqnarray*}
\lambda_{uV}\FF(A) & = & \bigvee_{\H\in\FF^{\#}}\adh_{\L^{\wedge}}\H(A)=\bigvee_{\H\in\FF^{\#}}\bigvee\{g(A):g\in\L^{\wedge},g_{|\H}=0\}\\
 & = & \bigvee_{\mu\in\L}\left(\mu^{\wedge}(A)\ominus\bigvee_{\F\in\FF}\bigwedge_{C\in\F}\mu^{\wedge}(C)\right)
\end{eqnarray*}
and 
\begin{eqnarray*}
\lambda_{lV}\FF(A) & = & \bigvee_{\H\in\FF^{\#}}\adh_{\L^{\vee}}\H(A)=\bigvee_{\H\in\FF^{\#}}\bigvee\{g(A):g\in\L^{\vee},g_{|\H}=0\}.\\
 & = & \bigvee_{\mu\in\L}\left(\mu^{\vee}(A)\ominus\bigvee_{\F\in\FF}\bigwedge_{C\in\F}\mu^{\vee}(C)\right)=\bigvee_{x\in A}\bigvee_{\mu\in\L}\left(\mu(x)\ominus\bigvee_{\F\in\FF}\bigwedge_{C\in\F}\mu^{\vee}(C)\right)
\end{eqnarray*}

Note that in the traditional topological setting, the lower Vietoris
and lower Kuratowski topologies coincide, e.g., \cite[Proposition VIII.7.5]{DM.book}.
However, in general $\lambda_{lK}$ is finer than $\lambda_{lV}$,
and this inequality may be strict, though we have coincide over approach
spaces. 
\begin{thm}
\label{thm:lKfinerthanlV} If $(X,\lambda)$ is a convergence approach
space and $\lambda_{lK}$ and $\lambda_{lV}$ denote the corresponding
lower-Kuratowski and lower-Vietoris convergence approach structures
on $\C_{c(\lambda)}$, then 
\[
\lambda_{lK}\geq\lambda_{lV}.
\]
Moreover, this is an equality if $(X,\lambda)$ is an approach space.
\end{thm}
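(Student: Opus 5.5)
We compare the two structures pointwise. Since $\lambda_{lK}\FF(A)=\bigvee_{x\in A}\adh_{\lambda}(\rdc\FF^{\#})(x)$ and
\[
\lambda_{lV}\FF(A)=\bigvee_{x\in A}\bigvee_{\mu\in\L}\Bigl(\mu(x)\ominus s_{\mu}\Bigr),\qquad s_{\mu}:=\bigvee_{\F\in\FF}\bigwedge_{C\in\F}\mu^{\vee}(C),
\]
the inequality $\lambda_{lK}\geq\lambda_{lV}$ reduces to the following claim. For every $x\in X$ and every contraction $\mu\in\cont(X,V)$ we have $\mu(x)\ominus s_{\mu}\leq\adh_{\lambda}(\rdc\FF^{\#})(x)$, that is, $\mu(x)\ominus s_{\mu}\leq\lambda(\G)(x)$ for every filter $\G\#\rdc\FF^{\#}$.

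To prove the claim, assume $s_{\mu}<\infty$ (otherwise there is nothing to prove) and fix $\delta>0$. Put $B_{\delta}:=\{\mu\leq s_{\mu}+\delta\}$ and $\F_{\delta}:=\{C\in\C_{c(\lambda)}:C\subset B_{\delta}\}$.
\begin{enumerate}
\item By definition of $s_{\mu}$, every $\F\in\FF$ contains some $C$ with $\mu^{\vee}(C)<s_{\mu}+\delta$. Hence $\F_{\delta}$ meets every element of $\FF$, that is, $\F_{\delta}\in\FF^{\#}$.
\item Since $\G\#\rdc\FF^{\#}$, every $G\in\G$ meets $\rdc\F_{\delta}\subset B_{\delta}$. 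Therefore $B_{\delta}\in\G^{\#}$.
\item Because $\mu$ is a contraction,
\[
\lambda(\G)(x)\geq\lambda_{V}(\mu[\G])(\mu(x))=\bigvee_{B\in\G^{\#}}\mu(x)\ominus\bigvee\mu(B)\geq\mu(x)\ominus(s_{\mu}+\delta).
\]
\end{enumerate}
Letting $\delta\to0$ gives the claim. Taking the infimum over $\G$ and then the suprema over $\mu$ and $x\in A$ yields $\lambda_{lV}\leq\lambda_{lK}$. This half uses only that $\mu$ is a contraction into $V$, so it holds over any convergence approach space.

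For equality over an approach space, the plan is the reverse inequality. Fix $x$ and $r<\adh_{\lambda}(\rdc\FF^{\#})(x)$; we look for some $\mu\in\L$ with $\mu(x)\ominus s_{\mu}\geq r$. The natural candidates are the distance functions $\mu=\adh_{\lambda}B$, which are contractions exactly because $(X,\lambda)$ satisfies (\ref{eq:diagadh}).

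Here is the reduction I would attempt first.
\begin{enumerate}
\item Using (\ref{eq:limPRAP}), rewrite
\[
\adh_{\lambda}(\rdc\FF^{\#})(x)=\bigwedge_{\G\#\rdc\FF^{\#}}\ \bigvee_{B\in\G^{\#}}\adh_{\lambda}B(x).
\]
\item Use Lemma~\ref{lem:grillrdc} (its proof is purely set-theoretic) to identify the sets meshing with every element of $\rdc\FF^{\#}$ as those $B$ with $B^{-}\in\FF$.
\item Try to exchange the infimum and the supremum, producing a single set $B$ with $B^{-}\in\FF$ and $\adh_{\lambda}B(x)>r$, or at least a finite family of such sets.
\item For such a $B$, set $\mu=\adh_{\lambda}B$. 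Since $B^{-}\in\FF$, one wants $s_{\mu}$ to be $0$, which would give $\mu(x)\ominus s_{\mu}=\mu(x)>r$.
\end{enumerate}
Step 4 does not follow from $B^{-}\in\FF$ alone. That condition gives elements $C$ meeting $B$, but $\mu^{\vee}(C)$ is a supremum over all of $C$, so such $C$ need not satisfy $\mu^{\vee}(C)\leq\epsilon$. I therefore expect to need a finite combination $\bigwedge_{k}\mu_{k}^{\vee}$ from $\L^{\vee}$, or functions of the form $\adh_{\lambda}B\ominus\epsilon$ or truncations using $B^{(\epsilon)}$, to control $s_{\mu}$. Lemma~\ref{lem:alphaofadhlesseps} is the tool for keeping $B^{(\epsilon)}$ inside the appropriate sublevel set.

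This exchange of infimum and supremum is the main obstacle. If it cannot be done directly, the fallback is to use the tower description instead. By Proposition~\ref{prop:pretoplK} and Corollary~\ref{cor:toweradh}, each level $\lim_{lK_{\epsilon}}$ is the lower-Kuratowski pretopology of $\tau_{\epsilon}$, with vicinity filters generated by sets $V^{-}$, $V\in\V_{\tau_{\epsilon}}(t)$. The task is then to realize each such $V$, up to an arbitrarily small loss in the level, as a sublevel set $\{\mu<\epsilon'\}$ of a contraction $\mu$. In an approach space $\tau_{\epsilon}$-neighborhoods should be describable through the regular function frame $\L$ in exactly this way.
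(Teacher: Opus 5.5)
Your proof of $\lambda_{lK}\geq\lambda_{lV}$ is correct and is essentially the paper's. The paper takes a near-optimal $\G_x\#\rdc(\FF^{\#})$ and uses Lemma~\ref{lem:grillrdc} to get $G^{-}\in\FF$ for every $G\in\G_x$. From this it gets $\bigwedge\mu(G)\leq s_{\mu}$ and then applies the contraction inequality in its filter form. Your sublevel set $B_{\delta}\in\G^{\#}$ is the grill-side version of the same bookkeeping.

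The equality over approach spaces, however, is not proved. You only outline a plan, and the plan aims at the wrong family. The sets $B$ with $B^{-}\in\FF$ form $(\rdc\FF^{\#})^{\#}$, which is where the \emph{elements} of the filters $\G\#\rdc\FF^{\#}$ live. So, as you noticed, $\mu=\adh_{\lambda}B$ gives no control of $s_{\mu}$.

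Since $\rdc(\FF^{\#})\subset\G^{\#}$ for each such $\G$, the exchange of infimum and supremum in your step 1 should produce an element of $\rdc(\FF^{\#})$ itself. That exchange is available from (\ref{eq:preAP}): for any family $\A\subset\mathbb{P}X$ in a pre-approach space, $\adh_{\lambda}\A(x)=\bigvee_{D\in\A}\adh_{\lambda}D(x)$.
\begin{itemize}
\item The inequality ``$\geq$'' is immediate, since $\G\#\A$ puts each $D\in\A$ in $\G^{\#}$.
\item For ``$\leq$'' (when the right-hand side is finite), pick $\G_{D}\#D$ with $\lambda(\G_{D})(x)\leq\adh_{\lambda}D(x)+\epsilon$ for each $D\in\A$. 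The filter $\bigcap_{D\in\A}\G_{D}$ meshes with $\A$, and by (\ref{eq:preAP}) its value at $x$ is $\bigvee_{D}\lambda(\G_{D})(x)$.
\end{itemize}
With $\A=\rdc(\FF^{\#})$ this is the paper's identity $\lambda_{lK}\FF(A)=\bigvee_{x\in A}\bigvee_{\H\in\FF^{\#}}\adh_{\lambda}(\rdc\H)(x)$.

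Now take $r<\adh_{\lambda}(\rdc\H_{0})(x)$ with $\H_{0}\in\FF^{\#}$, and let $\mu=\adh_{\lambda}(\rdc\H_{0})$. This is a contraction because $(X,\lambda)$ is an approach space; the paper equivalently takes some $\mu\in\L$ with $\mu_{|\rdc\H_{0}}=0$ and $\mu(x)>r$. Every $\F\in\FF$ contains some $C\in\H_{0}$, and $C\subset\rdc\H_{0}$ gives $\mu^{\vee}(C)=0$ by centeredness. Hence $s_{\mu}=0$ and $\mu(x)\ominus s_{\mu}=\mu(x)>r$.

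This removes the obstruction in your step 4, because the witnesses $C$ now lie entirely in the zero set of $\mu$. No finite infima from $\L^{\vee}$ and no tower arguments are needed. Without this step, your proposal proves only the inequality $\lambda_{lK}\geq\lambda_{lV}$, not the equality.
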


\begin{proof}
For the first part, it is enough to show that $\mu^{\vee}\in\cont(\lambda_{lK},V)$
whenever $\mu\in\cont(\lambda,V)$. Let $\FF\in\mathbb{F}\C_{c(\lambda)}$
and $A\in\C_{c(\lambda)}$. We need to show, 
\[
\mu^{\vee}(A)\leq\lambda_{lK}\FF(A)+\bigvee_{F\in\FF}\bigwedge_{C\in F}\mu^{\vee}(C).
\]
Let $\alpha=\lambda_{lK}\FF(A)=\bigvee_{x\in A}\adh_{\lambda}(\rdc\FF^{\#})(x)$
and $\gamma=\bigvee_{F\in\FF}\bigwedge_{C\in F}\mu^{\vee}(C)$.

For every $x\in A$ and $\epsilon>0$, $\adh_{\lambda}(\rdc\FF^{\#})(x)\leq\alpha$
so that there is a filter $\G_{x}\#\rdc(\FF^{\#})$ with $\lambda(\G_{x})(x)\leq\alpha+\epsilon$.
In view of (\ref{eq:BingrillofrdcFgrill}), $\{G^{-}:G\in\G_{x}\}\subset\FF$.
Hence, for every $G\in\G_{x}$, 
\[
\gamma=\bigvee_{F\in\FF}\bigwedge_{C\in F}\mu^{\vee}(C)\geq\bigwedge_{C\in G^{-}}\mu^{\vee}(C)
\]
 and for every $C\in G^{-}$, there is $z\in C\cap G$, so that 
\[
\mu^{\vee}(C)=\sup_{x\in C}\mu(x)\geq\mu(z)\geq\inf_{w\in G}\mu(w).
\]
Hence, $\gamma\geq\bigwedge_{C\in G^{-}}\mu^{\vee}(C)\geq\inf_{w\in G}\mu(w)$
for every $G\in\G_{x}$ and thus
\[
\gamma\geq\bigvee_{G\in\G_{x}}\bigwedge_{C\in G^{-}}C\geq\sup_{G\in\G_{x}}\inf_{w\in G}\mu(w).
\]
Moreover, as $\mu\in\cont(\lambda,[0,\infty])$, 
\[
\mu(x)\leq\lambda(\G_{x})(x)+\sup_{G\in\G_{x}}\inf_{w\in G}\mu(w)\leq\alpha+\epsilon+\gamma.
\]
As $\epsilon>0$ is arbitrary, we conclude that $\mu(x)\leq\alpha+\gamma$
for all $x\in A$, and thus $\mu^{\vee}(A)\leq\alpha+\gamma$, as
required.

Now, suppose that $(X,\lambda)$ is an approach space with regular
function frame $\L$. In this case 
\[
\lambda_{lK}\FF(A)=\bigvee_{x\in A}\adh_{\lambda}(\rdc\FF^{\#})(x)=\bigvee_{x\in A}\bigvee_{\G\in\FF^{\#}}\adh_{\lambda}\rdc\G(x).
\]
Since 
\[
\lambda_{lV}\FF(A)=\bigvee_{x\in A}\bigvee_{\mu\in\L}\mu(x)\ominus\bigvee_{\F\in\FF}\bigwedge_{C\in\F}\mu^{\vee}(C)
\]
it is enough to show that 
\[
\bigvee_{\mathcal{G}\in\mathfrak{F}^{\#}}\adh_{\lambda}\rdc\G(x)=\bigvee_{\mu\in\mathcal{L}}\left(\mu(x)\ominus\bigvee_{\F\in\FF}\bigwedge_{C\in\F}\mu^{\vee}(C)\right)
\]
for every $x$. To this end, Let $\alpha=\bigvee_{\mathcal{G}\in\mathfrak{F}^{\#}}\adh_{\lambda}\rdc\G(x)$.
It is enough to show that for $\gamma<\alpha$ there is $\mu\in\L$
with $\mu(x)\ominus\bigvee_{\F\in\FF}\bigwedge_{C\in\F}\mu^{\vee}(C)\geq\gamma$.
Given such a $\gamma$ there is $\G_{0}\in\FF^{\#}$ with 
\[
\adh_{\lambda}\rdc\G_{0}(x)=\bigvee_{\mu\in\L,\mu_{|\rdc\G_{0}}=0}\mu(x)>\gamma,
\]
so that there is $\mu\in\L$ with $\mu_{|\rdc\G_{0}}=0$ and $\mu(x)>\gamma$.
Since $\G_{0}\in\FF^{\#}$, for every $\F\in\FF$ there is $K_{\F}\in\F\cap\G_{0}$
so that $\mu_{|K_{\F}}=0$ because $\mu_{|\rdc\G_{0}}=0$ and $K_{\F}\subset\rdc\G_{0}$.
Hence $\mu^{\vee}(K_{\F})=0$ and thus $\bigwedge_{C\in\F}\mu^{\vee}(C)=0$
and $\bigvee_{\F\in\FF}\bigwedge_{C\in\F}\mu^{\vee}(C)=0$. Therefore,
for this specific $\mu$, 
\[
\mu(x)\ominus\bigvee_{\F\in\FF}\bigwedge_{C\in\F}\mu^{\vee}(C)=\mu(x)\ominus0=\mu(x)>\gamma
\]
as desired. 
\end{proof}
Moreover, the approach structure $\lambda_{lK}$ may be strictly finer
than the approach structure $\lambda_{lV}$. This phenomenon occurs
already in the classical setting of $\conv$, as soon as the base
space $X$ is not topological. 
\begin{example}[$\lim_{lK}\neq\lim_{lV}$]
 Take the ``simplest'' non-topological pretopology on $\{a,b,c\}$
defined by $\V(a)=\{a\}^{\uparrow}$, $\V(b)=\{a,b\}^{\uparrow}$
and $\V(c)=\{b,c\}^{\uparrow}$ so that $\lim\{a\}^{\uparrow}=\{a,b\}$,
$\lim\{b\}^{\uparrow}=\{b,c\}$ and $\lim\{c\}^{\uparrow}=\{c\}$.
Then $\C_{X}=\{\emptyset,\{c\},\{b,c\},X\}$. If $\FF=\{\{a\}\}^{\uparrow}=\FF^{\#}$
then $A\in\lm_{lV}\FF$ if for every $O\in\O_{X}$,
\[
O\cap A\neq\emptyset\then O^{-}\in\FF,
\]
equivalently (using the contrapositive and $C=X\setminus O$), 
\[
a\in C\then A\subset C
\]
for every $C\in\C_{X}$. But the only closed set containing $a$ is
$X$ so $\C_{X}\subset\lm_{lV}\FF$. On the other hand, $A\in\lm_{lK}\FF$
if and only if $A\subset\adh(\rdc\FF)=\adh\{a\}=\{a,b\}$ but the
only closed set included in $\{a,b\}$ is the empty set, so that $\lim_{lK}\FF=\{\emptyset\}$.
\end{example}

Similarly, define for every $B\in\C_{c(\lambda)}$, 
\begin{eqnarray*}
\mu_{B}^{\wedge}:\C_{c(\lambda)} & \to & [0,\infty]\\
A & \mapsto & \bigwedge\mu(A\cap B)
\end{eqnarray*}
and 
\begin{eqnarray*}
\mu_{B}^{\vee}:\C_{c(\lambda)} & \to & [0,\infty]\\
A & \mapsto & \bigvee\mu(A\cap B).
\end{eqnarray*}

The corresponding lower regular function frame $\L_{uF}$ generated
by $\{\mu_{B}^{\wedge}:\mu\in\L,m(B)=0\}$ defines the upper-Fell
approach structure in the sense of \cite{atecs2023fell}. 
\begin{thm}
If $(X,\lambda)$ is a convergence approach space then $\lambda_{uF}$
defined on $\C_{c(\lambda)}$ by (\ref{eq:lambdauF}) is finer than
the approach structure defined by $\L_{uF}$.
\end{thm}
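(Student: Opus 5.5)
The plan is to reduce the comparison to a contraction condition on generators. The approach structure $\lambda_{\L_{uF}}$ is the one determined by the lower regular function frame generated by $\{\mu_{B}^{\wedge}:\mu\in\L,m(B)=0\}$. Such a structure is the coarsest convergence approach structure on $\C_{c(\lambda)}$ for which every generator is a contraction into $V=([0,\infty],\lambda_{V})$. This holds because contractions into $V$ are stable under pointwise suprema and finite infima, and $\lambda_{\L_{uF}}$ is given by the supremum formula used above for $\lambda_{uV}$ and $\lambda_{lV}$. So it suffices to show that $\mu_{B}^{\wedge}\in\cont(\lambda_{uF},V)$ whenever $\mu\in\L=\cont(X,V)$ and $m(B)=0$. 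Concretely, for $\FF\in\mathbb{F}\C_{c(\lambda)}$ and $A\in\C_{c(\lambda)}$, we must prove
\[
\mu_{B}^{\wedge}(A)\leq\lambda_{uF}\FF(A)+\gamma,\qquad\gamma:=\bigvee_{\F\in\FF}\bigwedge_{C\in\F}\mu_{B}^{\wedge}(C).
\]

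Set $\alpha=\lambda_{uF}\FF(A)$. If $\alpha=\infty$ or $\gamma=\infty$ there is nothing to prove, so assume both are finite. Fix $\delta>0$ and put $H_{\delta}=B\cap\{\mu<\gamma+\delta\}$. For every $\F\in\FF$ we have $\bigwedge_{C\in\F}\bigwedge\mu(C\cap B)\leq\gamma<\gamma+\delta$. Hence some $C\in\F$ meets $H_{\delta}$, so $H_{\delta}\in(\rdc\FF)^{\#}$.

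The key dichotomy is as follows. If $H_{\delta}\cap A\neq\emptyset$, there is $x\in A\cap B$ with $\mu(x)<\gamma+\delta$, and therefore $\mu_{B}^{\wedge}(A)\leq\gamma+\delta$. If instead $H_{\delta}\subset A^{c}$, then $H_{\delta}$ is admissible in (\ref{eq:lambdauF}), so $1\oslash m(H_{\delta})\leq\alpha<\infty$, that is, $m(H_{\delta})>0$. We then aim for a contradiction with $m(B)=0$. Letting $\delta\to0$ in the first branch yields $\mu_{B}^{\wedge}(A)\leq\gamma$, which is even the non-Archimedean form of the inequality and is consistent with Theorem \ref{thm:lambdauFpreAP}.

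The main obstacle is the second branch, namely passing from $m(B)=0$ to smallness of $m(H_{\delta})$. The measure of compactness is not obviously monotone, because limit points of ultrafilters on $H_{\delta}$ supplied by $m(B)=0$ may lie in $B\setminus H_{\delta}$. I would first use the contraction property of $\mu$. Take $\U\in\beta H_{\delta}$. Since $\U\in\beta B$, for each $\eta>0$ there is $x\in B$ with $\lambda\U(x)\leq\eta$. Since $\mu\in\cont(\lambda,V)$ and $H_{\delta}\in\U$, we get $\mu(x)\leq\eta+\gamma+\delta$, so $x\in H_{2\delta}$ once $\eta<\delta$. This shows that ultrafilters on $H_{\delta}$ have arbitrarily good limits inside the slightly larger set $H_{2\delta}$.

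I would then rerun the dichotomy with the pair $(H_{\delta},H_{2\delta})$ in place of the single set $H_{\delta}$. If $H_{2\delta}\cap A\neq\emptyset$, the first branch applies with $2\delta$. Otherwise $H_{2\delta}\subset A^{c}$, and $H_{2\delta}\in(\rdc\FF)^{\#}$ as a superset of $H_{\delta}$, so $H_{2\delta}$ is admissible in (\ref{eq:lambdauF}) and $1\oslash m(H_{2\delta})\leq\alpha<\infty$, i.e.\ $m(H_{2\delta})>0$.

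To close this case I must show $m(H_{2\delta})=0$ after all. Here the infimum in $m(H_{2\delta})$ runs over $x\in H_{2\delta}$, but the supremum runs over all of $\beta H_{2\delta}$, not only over $\beta H_{\delta}$. This is where I expect to need an extra argument. The first thing I would try is a $\delta/2^{n}$ telescoping bookkeeping. Failing that, I would use the $c(\lambda)$-closedness of $B$ together with Lemma \ref{lem:alphaofadhlesseps} to absorb the shift from $H_{\delta}$ to $H_{2\delta}$.
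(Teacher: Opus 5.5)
\paragraph{Where the proof is incomplete.} Your reduction, namely checking that each generator $\nu=\mu_{B}^{\wedge}$ is a contraction from $(\C_{c(\lambda)},\lambda_{uF})$ to $V$, is the same as the paper's. Your first branch is also fine. The gap is the second branch, and that branch is the whole content of the theorem. Once $H_{\delta}\subset A^{c}$, you need an admissible set (a member of $(\rdc\FF)^{\#}$ contained in $A^{c}$) with $m=0$, and you never produce one.

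Passing from $H_{\delta}$ to $H_{2\delta}$ only moves the difficulty. Near-limits at small level $\eta$ of ultrafilters on $H_{2\delta}$ are only known to lie in $B\cap\{\mu\le\gamma+2\delta+\eta\}$, so the sets keep drifting upward. A $\delta/2^{n}$ telescoping has the same defect: ultrafilters along which $\mu$ increases to the top level of the union can still escape that union. Lemma \ref{lem:alphaofadhlesseps} does not help either, since it is stated for approach spaces and the theorem only assumes a convergence approach space. As it stands, the proposal does not prove the statement.

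\paragraph{How the paper handles this step.} The paper avoids the drift by taking a closure instead of enlarging sublevel sets. Fix a threshold $t$ strictly above your $\gamma$ and below $\nu(A)\ominus\lambda_{uF}\FF(A)$. For each $\F\in\FF$ the paper picks $x_{\F}\in C_{\F}\cap B$ with $\mu(x_{\F})<t$, and sets $K=\cl_{c(\lambda)}\{x_{\F}:\F\in\FF\}$. This $K$ lies in $B$ and in $(\rdc\FF)^{\#}$. For a contraction $\mu$ the set $\{\mu\le t\}$ is $c(\lambda)$-closed, because limits at level $0$ cost nothing. Hence $\mu\le t$ on all of $K$, with no $\eta$-loss.

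The paper then asserts $m(K)=0$, on the grounds that $K$ is a $c(\lambda)$-closed subset of $B$ and $m(B)=0$. From this, $K\not\subset A^{c}$, and any $z\in K\cap A$ gives $\nu(A)\le t$, a contradiction.

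\paragraph{That assertion is the hereditariness you doubted.} It is not valid in general for $m$, whose infimum runs over points of the set itself. Take $X=\{x_{j}\}_{j\ge1}\cup\{k_{n}\}_{n\ge1}$ with the quasi-metric given by $d(x_{j},k_{n})=1/j$, $d(x_{i},x_{j})=|1/i-1/j|$, and $d=1$ between all other pairs of distinct points. Use its approach structure $\delta(x,A)=\inf_{a\in A}d(x,a)$. Then:
\begin{itemize}
\item $c(\lambda)$ is discrete, so every subset is $c(\lambda)$-closed.
\item $m(X)=0$, since free ultrafilters on $\{k_{n}\}$ are at level $1/j$ from $x_{j}$, and free ultrafilters on $\{x_{j}\}$ are at level $1/i$ from $x_{i}$.
\item Free ultrafilters on $K=\{k_{n}:n\ge1\}$ are at level $1$ from every point of $K$, so $m(K)=1$.
\end{itemize}
So your suspicion was well founded. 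Completing either argument needs an additional ingredient: either an argument exploiting the specific set $\{x_{\F}\}$ and the contraction $\mu$, or an extra hypothesis making $m$-nullity hereditary on $c(\lambda)$-closed sets. Neither your sketch nor the paper's proof supplies it.
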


\begin{proof}
It is enough to show that $\mu_{B}^{\wedge}\in\cont(\lambda_{uF},\lambda_{V})$
whenever $\mu\in\L_{\lambda}$ and $B=\cl_{c(\lambda)}B$ is a subset
of $X$ with $m(B)=0$. Let $\nu:=\mu_{B}^{\wedge}$. Assume to the
contrary that $\mu_{B}^{\wedge}\notin\cont(\lambda_{uF},\lambda_{V})$,
so that there is a filter $\FF$ on $\C_{c(\lambda)}$ and $A\in\C_{c(\lambda)}$
for which $\lambda_{uF}(\FF)(A)<\lambda_{V}(\nu[\FF])(\nu(A))$. In
particular, $\alpha:=\lambda_{uF}(\FF)(A)$ and $\bigwedge_{H\in\nu[\FF]^{\#}}\bigvee H$
are both finite and there is $\gamma$ with 
\begin{equation}
\bigwedge_{H\in\nu[\FF]^{\#}}\bigvee H<\gamma<\nu(A)\ominus\alpha\leq\nu(A).\label{eq:auxlambdauFvsLuF}
\end{equation}
Hence there is $H\in\nu[\FF]^{\#}$ with $\bigvee H<\gamma$. Then
$\H=\nu^{-1}(H)\in\FF^{\#}$ and $\bigvee_{C\in\H}\nu(C)<\gamma$.
Hence, for every $\F\in\FF$ there is $C_{\F}\in\F$ with $\nu(C_{\F})<\gamma$,
so that there is $x_{\F}\in C_{\F}\cap B$ with $\mu(x_{\F})<\gamma$.
Let $H:=\{x_{\F}:\F\in\FF\}$. Note that $H\subset B$ and $H\in(\rdc\FF)^{\#}$.
As $B\in\C_{c(\lambda)}$, $H\subset B$ and $m(B)=0$, the set $K=\cl_{c(\lambda)}H$
is a subset of $B$ satisfying $m(K)=0$ and $K\in(\rdc\FF)^{\#}$.
If we had $K\subset A^{c}$, then in view of (\ref{eq:lambdauF}),
we would have $\lambda_{uF}(\FF)(A)=\infty$, which we already noted
is not the case. Hence, there is $z\in K\cap A\subset B\cap A$, so
that $\nu(A)=\inf_{x\in A\cap B}\mu(x)\leq\mu(z)$. Moreover, $z\in\cl_{c(\lambda)}H\cap A=H^{(0)}\cap A$
while $\mu(x)<\gamma$ for every $x\in H$ and $\mu\in\cont(\lambda,\lambda_{V})$,
so that $\mu(z)\leq\gamma$. Hence $\nu(A)\leq\gamma$, in contradiction
with (\ref{eq:auxlambdauFvsLuF}).
\end{proof}
Moreover, this inequality is strict in general:
\begin{example}[The (non-Archimedean) approach structure $\lambda_{uF}$ may be strictly
finer than the approach structure given by $\L_{uF}$]

Consider on $X=\mathbb{N}$ the approach structure $\lambda$ given
by 
\[
\lambda(\F)(x)=\begin{cases}
0 & \text{if }\text{\ensuremath{\F}}=\{x\}^{\uparrow}\\
1 & \text{if }\F\text{ is free}
\end{cases},
\]
in which it is plain that $m(B)=0$ if and only if $B$ is finite,
and $m(B)=1$ whenever $B$ is infinite. Let $\FF$ be the filter
generated on $\C_{X}$ by the sequence of singletons $\{\{n\}\}_{n=1}^{\infty}$.
Note that $(\rdc\FF)^{\uparrow}=\{\{n\geq k\}:k\in\mathbb{N}\}^{\uparrow}$
is the cofinite filter on $X$, so that $H\in(\rdc\FF)^{\#}$ if and
only if $H$ is infinite, if and only if $m(H)=1$. Hence,
\[
\lambda_{uF}\FF(\emptyset)=\bigvee_{H\in(\rdc\FF)^{\#}}1\oslash m(H)=1.
\]

On the other hand, 
\[
\lambda_{\L_{uF}}\FF(\emptyset)=\bigvee_{\nu\in\L_{uF}}\lambda_{V}(\nu[\FF])(\nu(\emptyset))=0
\]
because if $m(B)=0$, then $B$ is finite and thus $\nu=\mu_{B}^{\wedge}$
satisfies $\nu[\FF]=\{\infty\}^{\uparrow}$ as $\{\{n\}:n\geq k\}\cap B=\emptyset$
for $k$ large enough, so that $\nu(\{\{n\}:n\geq k\})=\inf_{\emptyset}\mu=\infty$. 
\end{example}

Now the \emph{Fell convergence approach structure} can be defined
as 
\[
\lambda_{\bar{F}}\FF(A)=\lambda_{uF}\FF(A)\vee\lambda_{lK}\FF(A)
\]
or alternatively
\[
\lambda_{F}\FF(A)=\lambda_{uF}\FF(A)\vee\lambda_{lV}\FF(A)
\]
to obtain a structure that coincide with the Fell convergence when
$(X,\lambda)$ is topological. In view of Theorem \ref{thm:lambdauFpreAP}
and Theorem \ref{thm:lKfinerthanlV},
\[
\lambda_{K}\geq\lambda_{\bar{F}}\geq\lambda_{F}.
\]

The study of conditions under which some of the hyperspace convergence
approach structures introduced in this paper may coincide will be
the topic of future research.
\begin{rem}
Of course, all (pre)convergence approach hyperspace structures considered
here can be presented in terms of nets. Namely, if $\lambda_{H}$
is one of the associated (pre)convergence approach space structure
on $\C_{c(\lambda)}$ considered in these notes (e.g., $\lambda_{uK},\lambda_{lK},\lambda_{lV},\lambda_{uV},\lambda_{uF},\lambda_{F}$),
and $\A=\left\langle A_{\alpha}\right\rangle _{\alpha\in\Lambda}$
is a net on $\C_{c(\lambda)}$, we define the convergence approach
of $\A$ in $\lambda_{H}$ in terms of the convergence approach of
the associated filter $\FF_{\A}=\{\{A_{\alpha}:\alpha\geq_{\Lambda}\beta\}:\beta\in\Lambda\}^{\uparrow_{\C_{c(\lambda)}}}$
via
\[
\lambda_{H}(\A)(C):=\lambda_{H}(\FF_{\A})(C),
\]
as particular case of Remark \ref{rem:netsinCAP}.
\end{rem}





\begin{thebibliography}{10}
	
	\bibitem{atecs2023fell}
	M.~Ate{\c{s}} and S.~Sa{\u{g}}{\i}ro{\u{g}}lu Peker.
	\newblock The {F}ell approach structure.
	\newblock {\em Communications Faculty of Sciences University of Ankara Series
		A1 Mathematics and Statistics}, 72(3):633--649, 2023.
	
	\bibitem{Beer}
	G.~Beer.
	\newblock {\em Topologies on Closed and Closed Convex Sets}.
	\newblock Kluwer Academic, 1993.
	
	\bibitem{towers}
	P.~Brock and D.~C. Kent.
	\newblock Approach spaces, limit tower spaces, and probabilistic convergence
	spaces.
	\newblock {\em Appl. Cat. Struct.}, 5:99--110, 1997.
	
	\bibitem{DayKelly}
	B.~J. Day and G.~M. Kelly.
	\newblock On topological quotient maps preserved by pullbacks or products.
	\newblock {\em Proc. Camb. Phil. Soc}, {\bf 67}:553--558, 1970.
	
	\bibitem{DM.products}
	S.~Dolecki and F.~Mynard.
	\newblock Convergence-theoretic mechanisms behind product theorems.
	\newblock {\em Topology and its Applications}, {\bf 104}:67--99, 2000.
	
	\bibitem{DM.book}
	S.~Dolecki and F.~Mynard.
	\newblock {\em Convergence {F}oundations of {T}opology}.
	\newblock World Scientific, 2016.
	
	\bibitem{compedium}
	G.~Gierz, K.~H. Hofmann, K.~Keimel, J.~Lawson, M.~Mislove, and D.~Scott.
	\newblock {\em A Compedium of Continuous Lattices}.
	\newblock Springer-Verlag, Berlin, 1980.
	
	\bibitem{JM.corecompact}
	F.~Jordan and F.~Mynard.
	\newblock Core compactness and diagonality in spaces of open sets.
	\newblock {\em Applied General Topology}, 12(2):143--162, 2011.
	
	\bibitem{Lowen88}
	E.~Lowen and R.~Lowen.
	\newblock A quasitopos containing {CONV} and {MET} as full subcategories.
	\newblock {\em Int. J. Math. Sci.}, 19:417--438, 1988.
	
	\bibitem{lowe88}
	E.~Lowen and R.~Lowen.
	\newblock Topological quasitopos hulls of categories containing topological and
	metric objects.
	\newblock {\em Cahiers de Topologies et G{\'e}ometrie diff{\'e}rentielle
		Cat{\'e}gorique}, 30:213--228, 1989.
	
	\bibitem{Lowen97}
	E.~Lowen, R.~Lowen, and C.~Verbeeck.
	\newblock Exponential objects in {PRAP}.
	\newblock {\em Cahiers de Topologies et G{\'e}ometrie diff{\'e}rentielle
		Cat{\'e}gorique}, 38:259--276, 1997.
	
	\bibitem{AP.book}
	R.~Lowen.
	\newblock {\em Approach Spaces: The Missing Link in Topology-Uniformity-Metric
		Triad}.
	\newblock Oxford University Press, 1997.
	
	\bibitem{indextheory}
	R.~Lowen.
	\newblock {\em {I}ndex {A}nalysis: {A}pproach {T}heory at {W}ork}.
	\newblock Springer Monographs in Mathematics. Springer, 2015.
	
	\bibitem{myn.APinCAP}
	F.~Mynard.
	\newblock On (pre)-approach spaces within convergence approach spaces.
	\newblock {\em submitted}.
	
	\bibitem{mynardmeasureCAP}
	F.~Mynard.
	\newblock Measure of compactness for filters in the approach setting.
	\newblock {\em Quaestiones Math.}, 31:189--201, 2008.
	
	\bibitem{schwarz.powers}
	F.~Schwarz.
	\newblock Powers and exponential objects in initially structured categories and
	application to categories of limits spaces.
	\newblock {\em Quaest. Math.}, {\bf 6}:227--254, 1983.
	
\end{thebibliography}
\end{document}